\pgfplotsset{compat=1.14}
\declaretheorem{theorem}
\newlist{lemlist}{enumerate}{1}
\setlist[lemlist]{label=(\roman{lemlisti}), ref=\thelemma(\roman{lemlisti}),noitemsep}
\Crefname{theorem}{Theorem}{Theorems}
\declaretheorem[name=Lemma,
Refname={Lemma,Lemmas}]{lemma}
\Crefname{lemlisti}{Lemma}{Lemmas}
\newlist{corlist}{enumerate}{1}
\setlist[corlist]{label=(\roman{corlisti}), ref=\thecorollary(\roman{corlisti}),noitemsep}
\declaretheorem[name=Corollary,
Refname={Corollary,Corollaries}]{corollary}
\Crefname{corlisti}{Corollary}{Corollaries}
\theoremstyle{plain}
\newtheorem*{claim*}{Claim}
\theoremstyle{definition}
\newlist{deflist}{enumerate}{1}
\setlist[deflist]{label=(\roman{deflisti}), ref=\thedefinition(\roman{deflisti}),noitemsep}
\declaretheorem[
name=Definition,
Refname={Definition,Definitions}]{definition}
\Crefname{deflisti}{Definition}{Definitions}
\newtheorem{condition}{Condition}
\Crefname{condition}{Condition}{Conditions}
\newlist{conlist}{enumerate}{1}
\setlist[conlist]{label=(\roman{conlisti}), ref=\thecondition(\roman{conlisti}),noitemsep}
\Crefname{conlisti}{Condition}{Conditions}
\declaretheorem[qed=$\blacktriangle$]{example}
\newcommand{\minus}{\scalebox{0.75}[1.0]{$-$}}
\newlist{CC}{enumerate}{1}
\setlist[CC]{label=Case \arabic*)}
\newlist{STP}{enumerate}{1}
\setlist[STP]{label=Step \arabic*)}
\newcommand \linedabstractkw[2]{
  \renewcommand\maketitlehookd{%
    \mbox{}\medskip\par
    \centering
    \hrule\medskip
    \begin{minipage}{0.9\textwidth}
    #1\\

    \textit{Keywords: }#2
    \end{minipage}\medskip\hrule\medskip
    }
}
\title{ \vspace*{-1cm}
Pareto Adaptive Robust Optimality via \\a Fourier-Motzkin Elimination Lens
}
\author{D. Bertsimas\thanks{Operations Research Center and Sloan School of Management, Massachusetts Institute of Technology, USA} \and S.C.M. ten Eikelder\thanks{Department of Econometrics and Operations Research, Tilburg University, The Netherlands} \and D. den Hertog\thanks{Department of Operations Management, University of Amsterdam, The Netherlands} \and N. Trichakis\protect\footnotemark[1]}
\date{April 30, 2022}
\newcommand{\dobib}{ 
    \bibliographystyle{apalike}
    \bibliography{References} 
}
\begin{document}
\renewcommand{\dobib}{}

\linedabstractkw{We formalize the concept of Pareto Adaptive Robust Optimality (PARO) for linear Adaptive Robust Optimization (ARO) problems. A worst-case optimal solution pair of here-and-now decisions and wait-and-see decisions is PARO if it cannot be Pareto dominated by another solution, i.e., there does not exist another such pair that performs at least as good in all scenarios in the uncertainty set and strictly better in at least one scenario. We argue that, unlike PARO, extant solution approaches---including those that adopt Pareto Robust Optimality from static robust optimization---could fail in ARO and yield solutions that can be Pareto dominated. The latter could lead to inefficiencies and suboptimal performance in practice. We prove the existence of PARO solutions, and present particular approaches for finding and approximating such solutions. We present numerical results for a facility location problem that demonstrate the practical value of PARO solutions. 

\hspace*{4ex} Our analysis of PARO relies on an application of Fourier-Motzkin Elimination as a proof technique. We demonstrate how this technique can be valuable in the analysis of ARO problems, besides PARO. In particular, we employ it to devise more concise and more insightful proofs of known results on (worst-case) optimality of decision rule structures.
} {Robust optimization; adaptive robust optimization; Pareto optimality; Fourier-Motzkin Elimination; decision rules}
\maketitle

\section{Introduction}\label{sec: introduction}
Robust Optimization (RO) is a widespread methodology for modeling decision-making problems under uncertainty that seeks to optimize worst-case performance~\citep{Bertsimas11, Gabrel14, Gorissen15}. In practice, RO problems usually admit multiple worst-case optimal solutions, the performance of which may differ substantially under non-worst-case uncertainty scenarios. Consequently, the choice of an optimal solution often has material impact on performance under real-world implementations. This important consideration, which was first brought forth by~\citet{Iancu14}, has been successfully tackled for static, single-stage (linear) RO problems. For the increasingly popular and broad class of dynamic, multi-stage Adaptive Robust Optimization (ARO) problems~\citep{BenTal04}, however, there is no successful approach for choosing an optimal solution, and the purpose of this paper is to bridge this gap.

In particular, for static RO problems, \citet{Iancu14} proposed the choice of so-called Pareto Robustly Optimal (PRO) solutions. In general, PRO solutions unarguably dominate non-PRO solutions, because, by definition, the former guarantee that there do not exist other worst-case optimal solutions that perform at least as good as the current solution for all scenarios in the uncertainty set, while performing strictly better for at least one scenario. The PRO concept has been applied to several static robust optimization settings, such as distributionally robust mechanism design \citep{Kocyigit19}.

Going beyond static RO problems, it is well understood that the choice of an optimal solution remains crucial for the broader class of multi-stage ARO problems. Similar to RO solutions, by following a worst-case philosophy and not considering performance across the entire spectrum of possible scenarios, ARO optimal solutions could lead to substantial performance losses. For example, see the work by \Citet{DeRuiter16}, who numerically demonstrate existence of multiple worst-case optimal solutions for the classical multi-stage inventory-production model that was considered in \citet{BenTal04}, and find them to differ considerably from each other in their non-worst-case performance.

For ARO problems, however, a solution approach that unarguably chooses ``good solutions,'' similar to PRO solutions for static RO problems, has proved to be elusive thus far. Extant approaches have all attempted to simply apply the concept of PRO to ARO problems. Specifically, they advocate restricting attention to adaptive variables that depend affinely on the uncertain parameters; commonly referred to as Linear Decision Rules (LDRs). Restricting to LDRs reduces the problem to static RO, and enables the search for associated PRO solutions \citep{Bertsimas19,DeRuiter16,Iancu14}. As we shall show, however, this indirect application of the PRO concept fails to produce solutions that cannot be dominated.

Recently, the concept of Pareto Adaptive Robustly Optimal (PARO) solutions was introduced for a specific ARO application \citep{tenEikelder21}, but no general treatment of the topic was included. In this paper, we formalize and study the concept of PARO solutions for linear ARO problems. Similar to PRO solutions for static RO problems, PARO solutions yield worst-case optimal performance and are not dominated by any other such solutions in non-worst-case scenarios. In other words, PARO solutions unarguably dominate non-PARO solutions, leading to improved performance in non-worst-case scenarios, while maintaining worst-case optimality. From a practical standpoint, this means that implementing PARO solutions can only yield performance benefits, without any associated drawbacks. 

To introduce the PARO concept and highlight its practical importance, we provide an illustrative toy example. The example also serves two additional important purposes. First, it enables us to show in a simple setting how PARO solutions can dominate PRO solutions, as remarked above. Second, the example motivates the need for new analysis techniques for studying PARO. 

\begin{example} \label{ex: toy-problem-RT}
\small
In treatment planning for radiation therapy, the goal is to deliver a curative amount of dose to the target volume (tumor tissue), while minimizing the dose to healthy tissues. Consider a simplified case with two target subvolumes. For subvolume $i\in\{1,2\}$, the required dose level $d_i$ depends on the radiation sensitivity of the tissue, which is unknown. Assume that, prior to treatment, the doses lie in 
\begin{align*}
U = \{(d_1,d_2)~|~ 50 \leq d_i \leq 60,~i=1,2\}.
\end{align*} 
Mid-treatment, the required doses are ascertained via biomarker measurements. 

Treatment doses are administered in two stages. The dose administered in the first stage, denoted by $x$, needs to be decided prior to treatment. The dose administered in the second stage, denoted by $y$, can be decided after the required doses have been ascertained, i.e., it can be adapted to uncertainty revelation. Both treatment doses are delivered homogeneously over both volumes in each stage. Dose in each stage is limited to the interval $[20,40]$. The total dose administered is $x+y$, and the healthy tissue receives a fraction $\delta >0$ of it. The Stage-1 dose $x$, and a decision rule $y(\cdot)$ for the adaptive Stage-2 dose can be chosen by solving:
\begin{subequations} \label{eq: RT-model}
\begin{align}
\min_{x,y(\cdot)} ~&~ \max_{(d_1,d_2) \in U} \delta(x  + y(d_1,d_2)) \\
\text{s.t.}~&~ x + y(d_1,d_2) \geq d_1,~~\forall  (d_1,d_2) \in U, \\
		   ~&~ x + y(d_1,d_2) \geq d_2,~~\forall  (d_1,d_2) \in U, \\
      	   ~&~ 20 \leq x \leq 40, \\
      	   ~&~ 20 \leq y(d_1,d_2) \leq 40,~~\forall  (d_1,d_2) \in U.
\end{align}
\end{subequations}
Problem~\eqref{eq: RT-model} is an ARO problem with constraintwise uncertainty, for which static decision rules are worst-case optimal \citep{BenTal04}. Plugging in $y(d_1,d_2) = \hat{y}$ and solving the resulting static RO model yields a worst-case optimal objective value of $60\delta$, achieved by all $(x,\hat{y})$ such that $x+\hat{y}=60$. For any such solution, the objective value remains $60\delta$ in not only the worst-case scenario but in all scenarios. Hence, all these solutions are PRO, according to the definition of \citet{Iancu14}. Consequently, the Stage-1 decisions that are PRO lie in the set:
\begin{align*}
X^{\text{PRO}} =  \{x ~|~ 20 \leq x \leq 40 \}.
\end{align*}

Consider now the decision rule $y^{\ast}(d_1,d_2) = \max\{20,d_1-x,d_2-x\}$, which is feasible for all feasible $x$. Furthermore, this rule minimizes the objective for any fixed $x$, $d_1$ and $d_2$. Plugging this in gives
\begin{align*}
\min_{20 \leq x \leq 40} \max_{(d_1,d_2) \in U} \delta \max\{20 + x,d_1,d_2\}. 
\end{align*}
For given $(d_1,d_2)$ the objective value is at least $\delta \max\{d_1,d_2\}$, and this is achieved by all $x \leq 30$. Thus, it should be preferable to implement one of these solutions for the Stage-1 decision. In fact, these solutions, which we call PARO, cannot be dominated by other solutions. Notably, the set of PARO solutions
\begin{align*}
X^{\text{PARO}} = \{x~|~ 20 \leq x \leq 30\},
\end{align*} 
is a strict subset of $X^{\text{PRO}}$. This implies that PARO solutions could dominate PRO solutions that are non-PARO. To exemplify, compare the following three solutions: (i) PARO solution $x^{\ast} = 25$ with optimal decision rule $y^{\ast}(d_1,d_2)$, (ii) PRO (non-PARO) solution $\hat{x} = 35$ with optimal decision rule $y^{\ast}(d_1,d_2)$, (iii) PRO solution $\hat{x} = 35$ with static decision rule $\hat{y}=25$.
\begin{table}[htb!]
\caption{\small Difference PARO and PRO solutions for \Cref{ex: toy-problem-RT}. \label{table: ex}}
\centering
\begin{tabular}{c c c c}
\toprule
Scenario $(d_1,d_2)$ & $(x^{\ast},y^{\ast})$ & $(\hat{x},y^{\ast})$ & $(\hat{x},\hat{y})$ \\ \midrule
$(60,60)$ & $60\delta$ & $60\delta$ & $60\delta$ \\
$(50,55)$ & $55\delta$ & $55\delta$ & $60\delta$ \\
$(50,50)$ & $50\delta$ & $55\delta$ & $60\delta$ \\
\bottomrule
\end{tabular}
\end{table}
\Cref{table: ex} shows the performance for three scenarios. For worst-case scenario $(60,60)$ all solutions perform equal. For scenario $(50,55)$ the solution (iii) is outperformed by the other two solutions, for scenario $(50,50)$ both solutions (ii) and (iii)) are outperformed by PARO solution (i). There is no scenario where $\hat{x}$ results in a strictly better objective value than $x^{\ast}$, irrespective of the used decision rule. Thus, the PRO solution $\hat{x}$ is dominated by the PARO solution $x^{\ast}$.
\hfill $\blacktriangle$ \end{example}

Besides showing that PRO solutions could be dominated in ARO problems, \Cref{ex: toy-problem-RT} also provides intuition into how. In particular, what unlocks extra performance in ARO problems is the application of decision rules that are not merely worst-case optimal, but rather ``Pareto optimal,'' i.e., they optimize performance over non-worst-case scenarios as well. Note, however, that although for worst-case optimality linear decision rules might suffice under special circumstances, for Pareto optimality non-linear rules appear to be more often necessary, as illustrated by the example. 

The application of non-linear decision rules to study PARO solutions invalidates the techniques used in the analysis of Pareto efficiency in RO in the extant literature, which is solely focused on linear formulations. In other words, analysis of Pareto efficiency in ARO calls for a new line of attack, which brings us to another contribution we make. Specifically, to study PARO solutions, we rely heavily on Fourier-Motzkin Elimination (FME) as a proof technique. Through the lens of FME we consider optimality of decision rule structures, which then enables us to study PARO. As a byproduct, we illustrate how this proof technique can be applied in ARO more generally, by providing more general and more insightful proofs of known results (not related to Pareto efficiency).

\subsection*{Findings and Contributions}

Before we begin our analysis, we summarize the findings and the contributions of this paper. The treatment presented is restricted to two-stage ARO models that are linear in both decision variables and uncertain parameters. 

\begin{enumerate}
    \item \emph{Concept of PARO Solutions.} In the context of linear ARO problems, we formalize the concept of Pareto Adaptive Robustly Optimal (PARO) solutions. PARO solutions have the property that no other solution and associated adaptive decision rule exist that dominate them, i.e., perform at least as good under any scenario, and perform strictly better under at least some scenario. As \Cref{ex: toy-problem-RT} above has already shown, in the context of ARO problems, PARO solutions can dominate other Pareto optimal solution concepts already proposed in the literature \citep{Iancu14}. In practice, PARO solutions can only yield performance benefits compared with non-PARO solutions, as the latter lead to efficiency losses.
    \item \emph{Properties of PARO Solutions.} We derive several properties of PARO solutions. The main results are that, for linear ARO problems with fixed and continuous recourse, affine dependence on uncertain parameters and a compact feasible region, there exists a first-stage PARO solution, and there exists a piecewise linear (PWL) decision rule that is PARO. To arrive at these results, our analysis relies on FME.
    \item \emph{Finding PARO Solutions and their Practical Value.} We present several approaches to find and/or approximate PARO solutions in practice, amongst others using techniques based on FME. We also conduct a numerical study for a facility location example. The results reveal that (approximate) PARO solutions can yield substantially better performance in non-worst-case scenarios than worst-case optimal and PRO solutions, thus demonstrating the practical value of the proposed methodology.
    \item \emph{FME as a Proof Technique for PARO.} \citet{Zhen18} introduce FME as both a solution and proof technique for ARO. We apply and extend the latter idea, and use FME to prove worst-case and Pareto optimality of various decision rule structures. We extend and/or generalize known results in ARO, not related to Pareto optimality, and provide more insightful proofs; for example, one that uses FME to establish the results by \citet{Bertsimas12} and \citet{Zhen18} on optimality of LDRs under simplex uncertainty sets.
\end{enumerate}

Finally, to better position our contributions vis-\`a-vis the extant literature, we note that, as mentioned earlier, PARO solutions were previously discussed in \citep{tenEikelder21}. They consider a specific 3-variable non-linear ARO problem arising in radiation therapy planning; their approach for finding PARO solutions relies heavily on the specific formulation. No general treatment of PARO was included, although this demonstrates that PARO may also be relevant for non-linear ARO problems. In the current paper, we formalize the PARO concept, derive properties, such as existence of PARO solutions, and also discuss constructive approaches towards finding them. With regards to FME, \citet{Zhen18} were the first to recognize its applicability to linear ARO problems, owing to its ability to eliminate adaptive variables. They use FME as both a solution and proof technique; for the latter the main obstacle is the exponential increase in number of constraints after variable elimination. In the current paper, we apply and extend the ideas of \citet{Zhen18}, and use FME as a proof technique. Through the lens of FME we first consider (worst-case) optimality of decision rule structures, and provide more general and more insightful proofs of known results. Subsequently, we investigate Pareto optimality using FME and present numerical results which demonstrate the value of PARO solutions. 

\subsection*{Notation and Organization}

Boldface characters represent matrices and vectors. All vectors are column vectors and the vector $\bm{a}_i$ is the $i$-th row of matrix $\bm{A}$. The space of all measurable functions from $\mathbb{R}^n$ to $\mathbb{R}^m$ that are bounded on compact sets is denoted by $\mathcal{R}^{n,m}$. The vectors $\bm{e}_i$, $\bm{1}$ and $\bm{0}$ are the standard unit basis vector, the vector of all-ones and the vector of all-zeros, respectively. Matrix $\bm{I}$ is the identity matrix. The relative interior of a set $S$ is denoted by $\text{ri}(S)$; its set of extreme points is denoted by $\text{ext}(S)$.

The manuscript is organized as follows. First, \Cref{sec: ARO} introduces the ARO setting and the notion of PARO. \Cref{sec: properties} investigates the existence of PARO solutions, and \Cref{sec: constructive} presents some practical approaches for the construction of PARO solutions. In \Cref{sec: num-exp} we present the results of our numerical experiments, and \Cref{sec: concluding-remarks} concludes the manuscript. \Cref{app: decision-rules} introduces FME and uses
it to establish (worst-case) optimality of decision rule structures.

\section{Pareto Optimality in (Adaptive) Robust Optimization} \label{sec: ARO}
We first generalize the definition of PRO of \citet{Iancu14} to non-linear static RO problems. The reason for this is that there turns out to be a relation between Pareto efficiency for non-linear static RO problems and linear ARO problems. Let $\bm{x} \in \mathcal{X} \subseteq \mathbb{R}^{n_x}$ denote the decision variables and let $\bm{z} \in U \subseteq \mathbb{R}^L$ denote the uncertain parameters. Let $f: \mathbb{R}^{n_x} \times \mathbb{R}^L \mapsto \mathbb{R}$ and consider the static RO problem 
\begin{align}\label{eq: static-nonlinear}
\min_{\bm{x}\in \mathcal{X}} \max_{\bm{z} \in U}~f(\bm{x},\bm{z}).
\end{align}
Let $\mathcal{X}^{\text{RO}}$ denote the set of robustly optimal (i.e., worst-case optimal) solutions. A robustly optimal solution $\bm{x}$ is PRO if there does not exist another robustly optimal solution $\bm{\bar{x}}$ that performs at least as good as $\bm{x}$ for all scenarios in the uncertainty set, while performing strictly better for at least one scenario. If such a solution $\bm{\bar{x}}$ does exist, it is said to \emph{dominate} $\bm{x}$. In practice, solution $\bm{\bar{x}}$ will always be preferred over $\bm{x}$. If all uncertainty in the objective is moved to constraints using an epigraph formulation, Pareto robust optimality may also be defined in terms of slack variables \citep[Section 4.1]{Iancu14}, but we do not use that definition here. We use the following formal definition:
\begin{definition}[Pareto Robustly Optimal] \label{def: PRO} 
A solution $\bm{x} \in \mathcal{X}^{\text{RO}}$ is PRO to \eqref{eq: static-nonlinear} if there does not exist another $\bm{\bar{x}} \in \mathcal{X}^{\text{RO}}$ such that
\begin{align*}
f(\bm{\bar{x}},\bm{z}) &\leq f(\bm{x},\bm{z}),  ~~\forall \bm{z} \in U, \\
f(\bm{\bar{x}},\bm{\bar{z}}) &< f(\bm{x},\bm{\bar{z}}),  ~~ \text{for some } \bm{\bar{z}} \in U. \tag*{$\blacksquare$}
\end{align*} 
\end{definition}

We aim to extend the concept of PRO to ARO problems. In particular, we consider the following adaptive linear optimization problem:
\begin{subequations} \label{eq: P}
\begin{align}
\min_{\bm{x},\bm{y}(\cdot)} ~&~ \max_{\bm{z} \in U}~ \bm{c}(\bm{z})^{\top} \bm{x} + \bm{d}^{\top} \bm{y}(\bm{z}), \label{eq: P-1}\\
\text{s.t.} ~&~ \bm{A}(\bm{z})\bm{x} + \bm{B}\bm{y}(\bm{z}) \leq \bm{r}(\bm{z}),~~\forall \bm{z} \in U, \label{eq: P-2}
\end{align}
\end{subequations}
where $\bm{z}\in U \subseteq \mathbb{R}^{L}$ is an uncertain parameter, with $U$ a compact, convex uncertainty set with nonempty relative interior. Variables $\bm{x} \in \mathbb{R}^{n_x}$ are the Stage-1 (\emph{here-and-now}) decisions. Usually we will assume $\bm{x}$ to be continuous variables, but we emphasize that all results in the paper also hold if (part of) $\bm{x}$ is restricted to be integer-valued. Variables $\bm{y} \in \mathcal{R}^{L,n_y}$ are also continuous and capture the Stage-2 (\emph{wait-and-see}) decisions, i.e., they are functions of $\bm{z}$. The matrix $\bm{B} \in \mathbb{R}^{m \times n_y}$ and vector $\bm{d} \in \mathbb{R}^{n_y}$ are assumed to be constant (fixed recourse), and $\bm{A}(\bm{z})$, $\bm{r}(\bm{z})$ and $\bm{c}(\bm{z})$ depend affinely on $\bm{z}$:
\begin{align*} 
\bm{A}(\bm{z}) := \bm{A}^0 + \sum_{k=1}^{L} \bm{A}^k z_k,~~\bm{r}(\bm{z}) := \bm{r}^0 + \sum_{k=1}^{L} \bm{r}^k z_k,~~\bm{c}(\bm{z}) := \bm{c}^0 + \sum_{k=1}^{L} \bm{c}^k z_k,
\end{align*}
with $\bm{A}^0,\dotsc,\bm{A}^{L} \in \mathbb{R}^{m \times n_x}$, $\bm{r}^0,\dotsc,\bm{r}^{L} \in \mathbb{R}^{m}$ and $\bm{c}^0,\dotsc,\bm{c}^{L} \in \mathbb{R}^{n_x}$. Uncertainty in the objective \eqref{eq: P-1} can be moved to the constraint using an epigraph formulation. Nevertheless, it is explicitly stated in the objective to facilitate a convenient definition of PARO. Let $\text{OPT}$ denote the optimal (worst-case) objective value of \eqref{eq: P}. We continue by stating several assumptions and definitions regarding adaptive robust feasibility and optimality. 
\begin{definition}[Adaptive Robustly Feasible]\label{def: ARF} 
A pair $(\bm{x},\bm{y}(\cdot))$ is Adaptive Robustly Feasible (ARF) to \eqref{eq: P} if 
$\bm{A}(\bm{z})\bm{x} + \bm{B}\bm{y}(\bm{z}) \leq \bm{r}(\bm{z}),~~\forall \bm{z} \in U$.
\hfill $\blacksquare$ \end{definition}
Sometimes it is useful to consider properties of the first- and second-stage decisions separately. Therefore, we also define adaptive robust feasibility for Stage-1 and Stage-2 decisions individually.
\begin{definition}[Adaptive Robustly Feasible $\bm{x}$ and/or $\bm{y}(\cdot)$] \label{def: ARF-xy} 
\leavevmode
\small
\begin{deflist}
\item A Stage-1 decision $\bm{x}$ is ARF to \eqref{eq: P} if there exists a $\bm{y}(\cdot)$ such that $(\bm{x}, \bm{y}(\cdot))$ is ARF to \eqref{eq: P}. \label{def: ARF-x} 
\item A Stage-2 decision $\bm{y}(\cdot)$ is ARF to \eqref{eq: P} if there exists a $\bm{x}$ such that $(\bm{x}, \bm{y}(\cdot))$ is ARF to \eqref{eq: P}. \hfill $\blacksquare$\label{def: ARF-y} 
\end{deflist}
\end{definition}
The set of all ARF solutions $\bm{x}$ is given by
\begin{align*}  
\mathcal{X} = \{\bm{x} \in \mathbb{R}^{n_x}~|~\exists \bm{y} \in \mathcal{R}^{L,n_y} : \bm{A}(\bm{z})\bm{x} + \bm{B}\bm{y}(\bm{z}) \leq \bm{r}(\bm{z}),~~\forall \bm{z} \in U \}.
\end{align*}
We assume set $\mathcal{X}$ is nonempty, i.e., there exists an $\bm{x}$ that is ARF, and we assume \eqref{eq: P} has a finite optimal objective value, i.e., OPT is a finite number. After feasibility, the natural next step is to formally define optimality.
\begin{definition}[Adaptive Robustly Optimal]\label{def: ARO} 
A pair $(\bm{x},\bm{y}(\cdot))$ is adaptive robustly optimal (ARO)\footnote{To ease exposition, we overload and reuse certain acronyms, such as ARO for ``Adaptive Robust Optimization'' and ``Adaptive Robustly Optimal'', as long as they can be readily disambiguated from the context.} to \eqref{eq: P} if it is ARF and $\bm{c}(\bm{z})^{\top} \bm{x} + \bm{d}^{\top} \bm{y}(\bm{z}) \leq \text{OPT},~~\forall \bm{z} \in U.$
\hfill $\blacksquare$ \end{definition}
We also define adaptive robust optimality for Stage-1 and Stage-2 decisions individually.
\begin{definition}[Adaptive Robustly Optimal $\bm{x}$ and/or $\bm{y}(\cdot)$]\label{def: ARO-xy} 
\leavevmode
\small
\begin{deflist}
\item A Stage-1 decision $\bm{x}$ is ARO to \eqref{eq: P} if there exists a $\bm{y}(\cdot)$ such that $(\bm{x}, \bm{y}(\cdot))$ is ARO to \eqref{eq: P}. \label{def: ARO-x} 
\item A Stage-2 decision $\bm{y}(\cdot)$ is ARO to \eqref{eq: P} if there exists a $\bm{x}$ such that $(\bm{x}, \bm{y}(\cdot))$ is ARO to \eqref{eq: P}. \hfill $\blacksquare$\label{def: ARO-y}  
\end{deflist}
\end{definition}

We are now in position to define Pareto Adaptive Robust Optimality for two-stage ARO problems.
\begin{definition}[Pareto Adaptive Robustly Optimal] \label{def: PARO} 
A pair $(\bm{x},\bm{y}(\cdot))$ is Pareto Adaptive Robustly Optimal (PARO) to \eqref{eq: P} if it is ARO and there does not exist a pair $(\bm{\bar{x}},\bm{\bar{y}}(\cdot))$ that is ARO and
\begin{align*}
\bm{c}(\bm{z})^{\top}\bm{\bar{x}} + \bm{d}^{\top} \bm{\bar{y}}(\bm{z}) &\leq \bm{c}(\bm{z})^{\top}\bm{x} + \bm{d}^{\top}\bm{y}(\bm{z}), ~~\forall \bm{z} \in U,  \\
\bm{c}(\bm{\bar{z}})^{\top}\bm{\bar{x}} + \bm{d}^{\top}\bm{\bar{y}}(\bm{\bar{z}}) &< \bm{c}(\bm{\bar{z}})^{\top}\bm{x} + \bm{d}^{\top}\bm{y}(\bm{\bar{z}}), ~~ \text{for some } \bm{\bar{z}} \in U. \tag*{$\blacksquare$}
\end{align*}\end{definition}
As before, the definitions can be extended to Stage-1 and Stage-2 decisions individually.
\begin{definition}[Pareto Adaptive Robustly Optimal $\bm{x}$ and/or $\bm{y}(\cdot)$]\label{def: PARO-xy}
\leavevmode
\small
\begin{deflist}
\item A Stage-1 decision $\bm{x}$ is PARO to \eqref{eq: P} if there exists a $\bm{y}(\cdot)$ such that $(\bm{x}, \bm{y}(\cdot))$ is PARO to \eqref{eq: P}. \label{def: PARO-x}
\item A Stage-2 decision $\bm{y}(\cdot)$ is PARO to \eqref{eq: P} if there exists a $\bm{x}$ such that $(\bm{x}, \bm{y}(\cdot))$ is PARO to \eqref{eq: P}.  \label{def: PARO-y} \hfill $\blacksquare$
\end{deflist} \end{definition}
Our main interest is in \Cref{def: PARO-x}. The reason for this is that the here-and-now decision $\bm{x}$ is usually the only one that the decision maker has to commit to. In contrast, instead of using decision rule $\bm{y}(\cdot)$, one can often resort to re-solving the optimization problem for the second stage once the value of the uncertain parameter has been revealed. This is known as the folding horizon approach, and it is applicable as long as there is time to re-solve between observing $\bm{z}$ and having to implement $\bm{y}(\bm{z})$. There is no such alternative for $\bm{x}$, however, and different decisions in Stage 1 may lead to different adaptation possibilities in Stage 2. 

Lastly, Pareto optimality can also be investigated for Stage-2 decisions if the Stage-1 decision $\bm{x}$ is fixed. 
\begin{definition}[Pareto Adaptive Robustly Optimal extension $\bm{y}(\cdot)$]\label{def: PARO-ext-y}
A Stage-2 decision $\bm{y}(\cdot)$ is a PARO extension to $\bm{x}^{\ast}$ for \eqref{eq: P} if $(\bm{x}^{\ast},\bm{y}(\cdot))$ is ARO to \eqref{eq: P} and there does not exist another $\bm{\bar{y}}(\cdot)$ such that $(\bm{x}^{\ast},\bm{\bar{y}}(\cdot))$ is ARO to \eqref{eq: P} and 
\begin{align*}
\bm{c}(\bm{z})^{\top}\bm{x}^{\ast} + \bm{d}^{\top}\bm{\bar{y}}(\bm{z}) &\leq \bm{c}(\bm{z})^{\top}\bm{x}^{\ast} + \bm{d}^{\top}\bm{y}(\bm{z}), ~~\forall \bm{z} \in U, \\
\bm{c}(\bm{\bar{z}})^{\top}\bm{x}^{\ast} + \bm{d}^{\top}\bm{\bar{y}}(\bm{\bar{z}}) &< \bm{c}(\bm{\bar{z}})^{\top}\bm{x}^{\ast} + \bm{d}^{\top}\bm{y}(\bm{\bar{z}}), ~~ \text{for some } \bm{\bar{z}} \in U. \tag*{$\blacksquare$}
\end{align*}
\end{definition}

\begin{figure}[htb]
\centering
\includegraphics[scale=0.8]{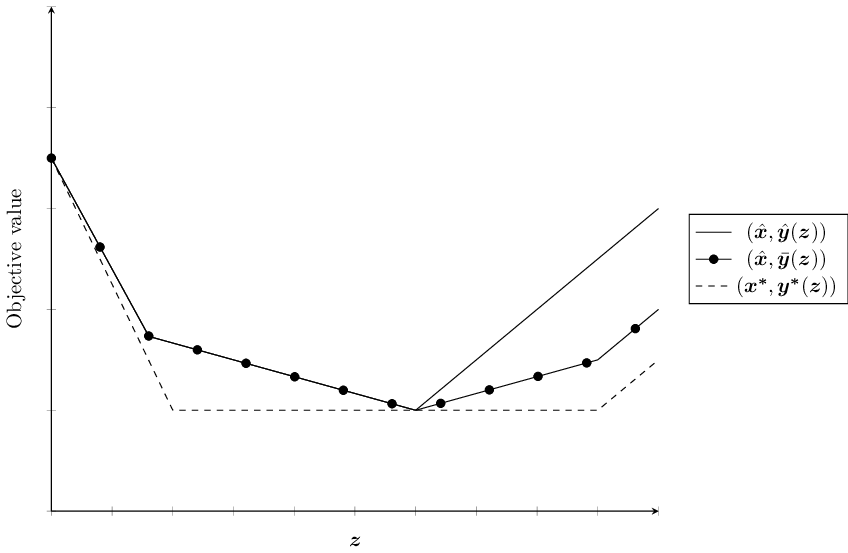}
\caption{\small Illustration of PARO concept. Each graph represents the objective value of \eqref{eq: P} for a given pair $(\bm{x},\bm{y}(\bm{z}))$ as a function of uncertain parameter $\bm{z}$. Solution $(\hat{\bm{x}}, \hat{\bm{y}}(\bm{z}))$ (solid line) is dominated by $(\hat{\bm{x}}, \bar{\bm{y}}(\bm{z}))$ (solid-dotted line). Thus, decision rule $\bar{\bm{y}}(\bm{z})$ may be a PARO extension of $\hat{\bm{x}}$, decision rule $\hat{\bm{y}}(\bm{z})$ is not. Solution $(\bm{x^{\ast}}, \bm{y^{\ast}}(\bm{z}))$ (dashed line) dominates both $(\hat{\bm{x}}, \hat{\bm{y}}(\bm{z}))$ and $(\hat{\bm{x}}, \bar{\bm{y}}(\bm{z}))$ and may be PARO. Solution $\hat{\bm{x}}$ may also be a PARO Stage-1 solution. \label{fig: PARO}}
\end{figure}
\Cref{fig: PARO} illustrates the PARO concept for a single uncertain parameter, and illustrates that care must be exercised when drawing conclusions related to PARO. In the example, $(\bm{x^{\ast}}, \bm{y^{\ast}}(\bm{z}))$ is possibly PARO, but this cannot be concluded from the figure. Also, Stage-1 solution $\hat{\bm{x}}$ might still be PARO, but that cannot be concluded from the figure either. The reason for the latter is that there may be yet another decision rule $\bm{\tilde{y}}(\bm{z})$ so that $(\hat{\bm{x}}, \bm{\tilde{y}}(\bm{z}))$ is not dominated by $(\bm{x^{\ast}}, \bm{y^{\ast}}(\bm{z}))$.

We conclude this section by mentioning three ways that the PARO concept can be generalized and relaxed, although we do not consider these any further. First, \citet{Bertsimas19} define Pareto optimal adaptive solutions for general (non-linear) two-stage ARO problems, which for linear problems is equivalent to our definition of PARO. They subsequently define Pareto optimal affine adaptive solutions, which is equivalent to the definition of PRO after using LDRs, and focus on finding the latter type of solutions. In their numerical studies, \citet{Iancu14} also consider two-stage problems and find PRO solutions after plugging in LDRs.

Secondly, we can also solely relax the requirement that the solution is ARO. For example, often LDRs do not guarantee an ARO solution but do exhibit good practical performance \citep{Kuhn09}. Suppose these yield a worst-case objective value $p$ ($>$ OPT). Then we can define p-PARO solutions as those solutions $(\bm{x},\bm{y}(\cdot))$ that yield an objective value of at most $p$ in each scenario, and are not dominated by another solution $(\bm{\bar{x}},\bm{\bar{y}}(\cdot))$ that yields an objective value of at most $p$ in each scenario. 

Thirdly, PARO may also be defined in terms of slack variables, analogous to the extension of PRO to constraint slacks in \citet[Section 4.1]{Iancu14}. In that paper, a slack value vector is introduced that quantifies the relative importance of slack in each constraint. This scalarization allows the computation of the total slack value of a solution in any scenario. Subsequently PRO (and also PARO) can be defined on this total slack value instead of the objective value. This may be useful in applications where ARO is mainly used for maintaining feasibility, such as immunizing against uncertain renewable energy source output \citep{Jabr13} and adjusting to disturbances in railway timetabling \citep{Polinder19}.

\section{Properties of PARO Solutions} \label{sec: properties}
In this section, we prove existence of PARO solutions for two-stage ARO problems of form \eqref{eq: P}. First, we use FME to prove that a PARO Stage-1 (here-and-now) solution is equivalent to a PRO solution of a PWL convex static RO problem, and use that to prove the existence of a PARO Stage-1 solution. Subsequently, we prove that there exists a PWL decision rule that is PARO to \eqref{eq: P}.

\subsection{Existence of a PARO Stage-1 solution}
We prove existence of PARO Stage-1 solutions in three steps. First, we prove that \eqref{eq: P} is equivalent to a static RO problem with a convex PWL objective. Subsequently, we prove that a PRO solution to this static RO problem is equivalent to a PARO solution to \eqref{eq: P}. Lastly, we prove that PRO solutions to such problems always exist.

\begin{lemma} \label{lemma: P-FME}
If $(\bm{x}^{\ast},\bm{y}^{\ast}(\cdot))$ is ARO to \eqref{eq: P}, $\bm{y}^{\ast}(\cdot)$ satisfies 
\begin{align} \label{appeq: dy = max-h}
\bm{d}^{\top}\bm{y}^{\ast}(\bm{z}) = \max_{(S,T)\in M} \{h_{S,T}(\bm{x}^{\ast},\bm{z}) \},~~\forall \bm{z}\in U,
\end{align} 
and $\bm{x}^{\ast}$ is optimal to 
\begin{align} \label{appeq: P-FME}
\min_{\bm{x} \in \mathcal{X}_{\text{FME}}}~&~\max_{\bm{z} \in U} \bm{c}(\bm{z})^{\top} \bm{x} + \max_{(S,T)\in M} \{h_{S,T}(\bm{x},\bm{z}) \},
\end{align}
with 
\begin{align*} 
M = \big\{(S,T)~|~ \exists k=1,\dotsc,n_y \text{ s.t. }& S \in C_k^{-}, T\in C_k^{+}, \\
& \beta(S,l) = \beta(T,l),~\forall l>k,~ 0\in S \cup T \big\} ,
\end{align*}
and linear functions
\begin{align*}\small
h_{S,T}(\bm{x},\bm{z}) = \sum_{p \in S, p>0} \frac{\alpha(S,p)}{\alpha(T,0) - \alpha(S,0)} \varphi_p(\bm{x},\bm{z}) - \sum_{q \in T, q>0} \frac{\alpha(T,q)}{\alpha(T,0) - \alpha(S,0)} \varphi_q(\bm{x},\bm{z}),
\end{align*}
and sets $C^{-}$, $C^{+}$, functions $\varphi(\cdot)$ and coefficients $\alpha$ and $\beta$ defined as in \Cref{lemma: y-representation}. Conversely, if $\bm{x}^{\ast}$ is optimal to \eqref{appeq: P-FME}, there exists a $\bm{y}^{\ast}(\cdot)$ such that $(\bm{x}^{\ast},\bm{y}^{\ast}(\cdot))$ is ARO to \eqref{eq: P}, and any such $\bm{y}^{\ast}(\cdot)$ satisfies \eqref{appeq: dy = max-h}.
\end{lemma}
\begin{proof}
See \Cref{app: proof-P-FME}.
\end{proof}

The result of \Cref{lemma: P-FME} is also illustrated in \Cref{ex: toy-problem-RT-FME}, where if auxiliary variable $t$ is eliminated the resulting problem has a convex PWL objective. If the number of adaptive variables in \eqref{eq: P} is small enough that full FME can be performed (order of magnitude: 20 adaptive variables \citep{Zhen18}), one can solve \eqref{eq: P-FME} via an epigraph formulation in order to obtain an ARO $\bm{x}$ to \eqref{eq: P}.

\begin{lemma} \label{lemma: PRO-FME}
A solution $\bm{x}^{\ast}$ is PARO to \eqref{eq: P} if and only if it is PRO to 
\begin{align} \label{eq: P-FME}
\min_{\bm{x} \in \mathcal{X}_{\text{FME}}}~&~\max_{\bm{z} \in U} \bm{c}(\bm{z})^{\top} \bm{x} + \max_{(S,T)\in M} \{h_{S,T}(\bm{x},\bm{z}) \},
\end{align}
where each element $(S,T)$ of set $M$ is a pair of sets of original constraints of \eqref{eq: P} and each function $h_{S,T}(\bm{x},\bm{z})$ is bilinear in $\bm{x}$ and $\bm{z}$.
\end{lemma}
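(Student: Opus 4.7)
The plan is to use FME on the inner minimization over $\bm{y}(\cdot)$, identify the pointwise‐optimal second‐stage cost as the inner max appearing in \eqref{eq: P-FME}, and then translate the PARO requirement scenario by scenario into the PRO requirement for the resulting static problem.

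First I would apply FME to \eqref{eq: P} (after moving the uncertain objective to an epigraph constraint, as in \Cref{ex: toy-problem-RT-FME}), eliminating the adaptive variables $y_1(\bm{z}),\dots,y_{n_y}(\bm{z})$ in index order. By the same reasoning used in \Cref{sec: decision-rules}, this produces the feasibility description $\mathcal{X}_{\text{FME}}$. Keeping track of which original constraints were combined at each elimination step (via \Cref{lemma: y-representation}), the \emph{minimum} of $\bm{d}^\top\bm{y}$ over all $\bm{y}$ feasible for a given $(\bm{x},\bm{z})$ can be read off as a finite maximum
\[
g(\bm{x},\bm{z}) \;=\; \max_{(S,T)\in M} h_{S,T}(\bm{x},\bm{z}),
\]
where each pair $(S,T)$ encodes a combination of lower‐bound constraints $S$ and upper‐bound constraints $T$ generated through the successive eliminations, and each $h_{S,T}$ inherits bilinearity in $(\bm{x},\bm{z})$ from the affine dependence of $\bm{A}(\bm{z})$ and $\bm{r}(\bm{z})$ on $\bm{z}$. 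Equivalently, one may invoke LP duality on the $\bm{y}$-LP to express its optimal value as the pointwise maximum over dual extreme points of an affine function of the right-hand side $\bm{r}(\bm{z})-\bm{A}(\bm{z})\bm{x}$; the FME view simply makes the index set $M$ explicit.

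Next I would show that, for every $\bm{x}\in\mathcal{X}_{\text{FME}}$, there exists an ARF decision rule $\bm{y}^{\ast}(\cdot)$ with $\bm{d}^\top\bm{y}^{\ast}(\bm{z})=g(\bm{x},\bm{z})$ for every $\bm{z}\in U$; for instance, the pointwise basic optimal selection of the inner LP in $\bm{y}$ is measurable and bounded on compact sets, hence lies in $\mathcal{R}^{L,n_y}$. Consequently the objective of \eqref{eq: P-FME} at $\bm{x}$ equals $\max_{\bm{z}\in U}\{\bm{c}(\bm{z})^\top\bm{x}+\bm{d}^\top\bm{y}^{\ast}(\bm{z})\}$, so $\mathcal{X}_{\text{FME}}$‐worst‐case optimality in \eqref{eq: P-FME} coincides with ARO of the Stage-1 solution in \eqref{eq: P}, and in particular both problems share the optimal value $\mathrm{OPT}$.

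With these ingredients the equivalence follows symmetrically. For ``$\Rightarrow$'': let $\bm{x}^{\ast}$ be PARO with witness $\bm{y}'(\cdot)$. A scenario‐wise improvement argument forces $\bm{d}^\top\bm{y}'(\bm{z})=g(\bm{x}^{\ast},\bm{z})$ for all $\bm{z}$, for otherwise replacing $\bm{y}'$ at the offending $\bar{\bm{z}}$ by the pointwise optimal value (using a pointwise-optimal rule constructed as above) would produce a dominating pair with the same $\bm{x}^{\ast}$. Hence the objective of $(\bm{x}^{\ast},\bm{y}'(\cdot))$ in every scenario equals the objective of $\bm{x}^{\ast}$ in \eqref{eq: P-FME}, and any $\bar{\bm{x}}\in\mathcal{X}^{\text{RO}}_{\text{FME}}$ dominating $\bm{x}^{\ast}$ in \eqref{eq: P-FME} would, paired with its pointwise-optimal rule, dominate $(\bm{x}^{\ast},\bm{y}'(\cdot))$ in \eqref{eq: P}, contradicting PARO. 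For ``$\Leftarrow$'': take a pointwise-optimal $\bm{y}^{\ast}(\cdot)$ for a PRO $\bm{x}^{\ast}$ of \eqref{eq: P-FME}; any ARO pair $(\bar{\bm{x}},\bar{\bm{y}}(\cdot))$ dominating $(\bm{x}^{\ast},\bm{y}^{\ast}(\cdot))$ satisfies $\bm{d}^\top\bar{\bm{y}}(\bm{z})\geq g(\bar{\bm{x}},\bm{z})$ and $\bm{d}^\top\bm{y}^{\ast}(\bm{z})=g(\bm{x}^{\ast},\bm{z})$, so the two inequalities defining domination transfer verbatim to \eqref{eq: P-FME}, contradicting PRO of $\bm{x}^{\ast}$.

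I expect the main obstacle to be the clean characterization in Step 1: namely, verifying rigorously that the pointwise minimum of $\bm{d}^\top\bm{y}$ produced by successive FME eliminations can be organized as $\max_{(S,T)\in M} h_{S,T}(\bm{x},\bm{z})$ with pairs $(S,T)$ of original constraints and with $h_{S,T}$ bilinear. The bookkeeping is routine once one applies \Cref{lemma: y-representation} inductively, but this is where the notation and the appeal to FME really do the work; the PARO-vs-PRO equivalence itself is then a clean scenario-wise argument.
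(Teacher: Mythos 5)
Your proposal is correct and follows essentially the same route as the paper's proof: apply FME (with the epigraph reformulation) while tracking original constraints via \Cref{lemma: y-representation} to express the pointwise-optimal second-stage cost as $\max_{(S,T)\in M} h_{S,T}(\bm{x},\bm{z})$, observe that a PARO pair must use a decision rule attaining this pointwise optimum in every scenario, and then transfer domination scenario-wise between \eqref{eq: P} and \eqref{eq: P-FME}. The only step deserving slightly more care is the existence of an ARF rule in $\mathcal{R}^{L,n_y}$ attaining the pointwise optimum, which you correctly flag and which the paper handles with the same basic-solution/PWL machinery.
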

\begin{proof}
See \Cref{app: proof-PRO-FME}.
\end{proof}
Thus, existence of a PARO solution to \eqref{eq: P} is now reduced to existence of a PRO solution to a static RO problem with a convex PWL objective in both $\bm{x}$ and $\bm{z}$. For problems without adaptive variables in the objective the following result immediately follows.
\begin{corollary} \label{cor: PRO-FME-d-zero}
If $\bm{d}=\bm{0}$, a solution $\bm{x}^{\ast}$ is PARO to \eqref{eq: P} if and only if it is PRO to
\begin{align*}
\min_{\bm{x} \in \mathcal{X}_{\text{FME}}} \max_{\bm{z} \in U} \bm{c}(\bm{z})^{\top}\bm{x}.
\end{align*}
\end{corollary}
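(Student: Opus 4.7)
The plan is to derive the corollary as a direct specialization of \Cref{lemma: PRO-FME} to the case $\bm{d} = \bm{0}$. Concretely, I would revisit the construction of the PWL objective in \eqref{eq: P-FME}. Following the proof template of \Cref{lemma: PRO-FME}, one introduces an epigraph variable $t$ via the constraint $\bm{c}(\bm{z})^{\top}\bm{x} + \bm{d}^{\top}\bm{y}(\bm{z}) \le t$ and then applies FME to eliminate all adaptive variables $y_1(\bm{z}),\ldots,y_{n_y}(\bm{z})$. The PWL terms $h_{S,T}(\bm{x},\bm{z})$ arise precisely from Fourier-Motzkin combinations in which the epigraph inequality (the only one that contains $t$) is paired with lower bounds on the $y_k(\bm{z})$ generated during elimination; each such $h_{S,T}$ inherits from the epigraph inequality a factor carrying the coefficients of $\bm{d}$.

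Next I would observe that when $\bm{d} = \bm{0}$, the epigraph inequality simplifies to $\bm{c}(\bm{z})^{\top}\bm{x} \le t$ and no longer involves any adaptive variable. Consequently FME never has to combine this inequality with bounds on the $y_k(\bm{z})$: it is carried through unchanged to the end of the elimination. Equivalently, every $h_{S,T}(\bm{x},\bm{z})$ in \Cref{lemma: PRO-FME} has $\bm{d}$ as a factor and is therefore identically zero, so $\max_{(S,T)\in M} h_{S,T}(\bm{x},\bm{z}) \equiv 0$. The remaining constraints after FME describe exactly $\mathcal{X}_{\text{FME}}$ by definition.

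Substituting into \eqref{eq: P-FME}, the PARO-characterizing problem reduces to
\begin{align*}
\min_{\bm{x}\in \mathcal{X}_{\text{FME}}}\ \max_{\bm{z}\in U}\ \bm{c}(\bm{z})^{\top}\bm{x},
\end{align*}
and \Cref{lemma: PRO-FME} immediately yields the claimed equivalence: $\bm{x}^{\ast}$ is PARO to \eqref{eq: P} if and only if it is PRO to the displayed static linear RO problem.

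The only mildly delicate point is the bookkeeping in the first paragraph, namely verifying that every PWL term generated by FME carries a factor of $\bm{d}$ and thus drops out when $\bm{d} = \bm{0}$. This is not genuinely hard but requires one to track the role of the epigraph inequality through the elimination process exactly as in the proof of \Cref{lemma: PRO-FME}; once that is in hand, the corollary follows without additional work.
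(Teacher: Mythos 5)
Your proposal is correct and follows the same route as the paper, which proves the corollary simply by plugging $\bm{d}=\bm{0}$ into the proof of \Cref{lemma: PRO-FME}; your tracing of why the epigraph inequality decouples from the elimination and the $h_{S,T}$ terms vanish is exactly the bookkeeping that step entails.
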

\begin{proof}
This directly follows from plugging in $\bm{d}=\bm{0}$ in the proof of \Cref{lemma: PRO-FME}.
\end{proof}

We can now prove one of our main results: existence of a PARO $\bm{x}$ for any ARO problem of form \eqref{eq: P} with compact feasible region. Our proof uses \Cref{lemma: PRO-FME} and essentially proves existence of a PRO solution to \eqref{eq: P-FME}.
\begin{theorem}\label{thm: x-PARO}
If $\mathcal{X}$ is compact, there exists a PARO $\bm{x}$ to \eqref{eq: P}.
\end{theorem}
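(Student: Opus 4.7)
By \Cref{lemma: PRO-FME}, the task reduces to exhibiting a PRO solution to the static problem \eqref{eq: P-FME}, whose objective $F(\bm{x}, \bm{z}) := \bm{c}(\bm{z})^\top \bm{x} + \max_{(S,T)\in M} h_{S,T}(\bm{x},\bm{z})$ is convex piecewise linear (PWL) and jointly continuous in $(\bm{x},\bm{z})$. The familiar \citet{Iancu14} proof for the linear case---minimize $F(\bm{x},\bm{z}^0)$ over the worst-case-optimal set for a single $\bm{z}^0 \in \text{ri}(U)$---exploits affinity of the difference $\bm{z} \mapsto F(\bar{\bm{x}},\bm{z}) - F(\bm{x}^*,\bm{z})$ in $\bm{z}$, which is lost in our PWL setting: the dominance gap can vanish at $\bm{z}^0$ while being strictly negative at some other $\bar{\bm{z}} \in U$. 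I would therefore scalarize not against a single scenario but against a Borel probability measure $\mu$ of full support on $U$.

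Set $\mathcal{X}^{\text{RO}} := \{\bm{x} \in \mathcal{X}_{\text{FME}} : \max_{\bm{z}\in U} F(\bm{x},\bm{z}) \leq \text{OPT}\}$; this is closed by continuity of the worst-case objective and bounded (hence compact) since $\mathcal{X}_{\text{FME}}=\mathcal{X}$ is compact. Choose $\mu$ to be, for instance, the normalization of Lebesgue measure on $U$ relative to its affine hull---well-defined because $U$ is compact with nonempty relative interior, and assigning positive mass to every relatively open subset of $U$. Consider the secondary problem
\[
\min_{\bm{x}\in\mathcal{X}^{\text{RO}}}\ \int_U F(\bm{x},\bm{z})\, d\mu(\bm{z}).
\]
Since $F$ is continuous in $\bm{x}$ and uniformly bounded on the compact set $\mathcal{X}^{\text{RO}} \times U$, the integral is continuous in $\bm{x}$, so a minimizer $\bm{x}^*$ exists by compactness.

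I claim $\bm{x}^*$ is PRO to \eqref{eq: P-FME}. Suppose for contradiction some $\bar{\bm{x}} \in \mathcal{X}^{\text{RO}}$ satisfies $F(\bar{\bm{x}},\bm{z}) \leq F(\bm{x}^*,\bm{z})$ for all $\bm{z}\in U$ with strict inequality at some $\bar{\bm{z}}$. Continuity in $\bm{z}$ yields an open neighborhood $V$ of $\bar{\bm{z}}$ on which the strict inequality persists. The standard relative-interior segment fact---any segment from $\bar{\bm{z}} \in U$ to a point of $\text{ri}(U)$ lies in $\text{ri}(U)$ except possibly at $\bar{\bm{z}}$---ensures $V \cap \text{ri}(U) \neq \emptyset$, whence $V \cap U$ is relatively open in $U$ and contains a relative ball around an interior point, so $\mu(V\cap U)>0$. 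Consequently $\int_U F(\bar{\bm{x}},\bm{z})\,d\mu(\bm{z}) < \int_U F(\bm{x}^*,\bm{z})\,d\mu(\bm{z})$, contradicting the optimality of $\bm{x}^*$. Invoking \Cref{lemma: PRO-FME} then delivers the desired PARO $\bm{x}^*$ for \eqref{eq: P}. The main obstacle lies in this final verification: one must carefully leverage the nonempty-relative-interior assumption on $U$ to translate a pointwise strict-inequality witness into positive $\mu$-measure, which is precisely the place where the single-scenario scalarization from the linear setting breaks in our convex-PWL setting.
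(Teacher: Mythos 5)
Your proof is correct, and it follows the same overall strategy as the paper: invoke \Cref{lemma: PRO-FME} to reduce the claim to the existence of a PRO solution of the static convex-PWL problem \eqref{eq: P-FME}, and then establish that existence over the compact worst-case-optimal set $\mathcal{X}^{\text{RO}}$. You also correctly diagnose the key obstruction, namely that the single-scenario scalarization of \citet{Iancu14} relies on affinity of the dominance gap $\bm{z} \mapsto F(\bar{\bm{x}},\bm{z}) - F(\bm{x}^*,\bm{z})$ in $\bm{z}$, which fails here because a difference of convex PWL functions need not be sign-definite off a single relative-interior scenario. Where you depart from the paper is in how you repair this: you scalarize against a full-support measure $\mu$ on $U$ and minimize $\int_U F(\bm{x},\bm{z})\,d\mu(\bm{z})$ over $\mathcal{X}^{\text{RO}}$, using continuity of $F$ plus the segment-to-relative-interior argument to convert a pointwise strict-dominance witness into a set of positive $\mu$-measure. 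This is a clean, self-contained argument; its advantages are that it requires only joint continuity of the objective (so it extends verbatim beyond the bilinear-piece structure of \eqref{eq: P-FME}) and that it produces the PARO point as the solution of a single auxiliary optimization problem. An alternative, equally elementary route that avoids measure theory entirely is to iterate argmin-reduction over a countable dense set of scenarios $\{\bm{z}^k\} \subseteq U$, obtaining a nested sequence of nonempty compact sets whose intersection (nonempty by the finite intersection property) consists of PRO points; density plus continuity then rules out a strict-dominance witness. Two minor points worth making explicit in your write-up: $\mathcal{X}^{\text{RO}}$ is nonempty because the worst-case objective is continuous on the compact set $\mathcal{X}_{\text{FME}}$ and OPT is assumed finite; and $M$ is finite (pairs of subsets of the finitely many original constraints), which is what guarantees $F$ is jointly continuous and uniformly bounded on $\mathcal{X}^{\text{RO}} \times U$. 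Neither affects the validity of the argument.
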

\begin{proof}
See \Cref{app: proof-x-PARO}.
\end{proof}
Note that the theorem also holds if $\mathcal{X}$ restricts (some elements of) $\bm{x}$ to be integer-valued. The boundedness assumption on $\mathcal{X}$ cannot be relaxed, because in that case a PRO solution to \eqref{eq: P-FME} need not exist. For example, consider the static RO problem $\max_{x\geq 0}\min_{z \in [0,1]} xz$. The worst-case scenario is $z=0$, and any $x\geq 0$ is worst-case optimal. In any other scenario $z >0$, higher $x$ is better. Any $x$ is dominated by $x+\epsilon$ with $\epsilon>0$, and there is no PRO solution.

\subsection{Existence of a PARO piecewise linear decision rule}
Now that existence of a PARO $\bm{x}$ is established, we investigate the structure of decision rule $\bm{y}(\cdot)$. We illustrate via an example that for any ARF $\bm{x}$ there exists a PWL PARO extension $\bm{y}(\cdot)$.
\begin{example} \label{ex: PWL-PARO-extension}
\small 
Consider the following ARO problem, a slight adaptation of \Cref{ex: hybrid}:
\begin{subequations} \label{eq: ex-PWL}
\begin{align}
\min_{x,\bm{y}(\cdot)}~&~ \max_{\bm{z} \in [0,1]^4} x - y_1(\bm{z}) + y_2(\bm{z}), \label{eq: ex-PWL-0}\\
\text{s.t.}~&~ x - y_2(\bm{z}) \leq -z_0 - \frac{1}{2}z_1,~~\forall (z_0,z_1) \in [0,1]^2, \label{eq: ex-PWL-1}\\
~&~ -x + y_1(\bm{z}) + y_2(\bm{z}) \leq z_0 + \frac{1}{2}z_2 + \frac{1}{2}z_3 + 2,~~\forall (z_0,z_2,z_3) \in [0,1]^3, \label{eq: ex-PWL-2}\\
~&~ 1 \leq y_1(\bm{z}) \leq 2,~~\forall \bm{z} \in U, \label{eq: ex-PWL-3}\\
~&~ \frac{3}{2} \leq y_2(\bm{z}) \leq 2,~~\forall \bm{z} \in U. \label{eq: ex-PWL-4}
\end{align}
\end{subequations}
We eliminate $y_1(\bm{z})$ and $y_2(\bm{z})$ in constraints \eqref{eq: ex-PWL-1}-\eqref{eq: ex-PWL-4} analogous to \Cref{ex: hybrid}, and find the ARF solution $x^{\ast} = \frac{1}{2}$ and the following bounds on $y_1(\bm{z})$ and $y_2(\bm{z})$:
\begin{subequations} \label{eq: ex-bounds-PWL}
\begin{align}
1 \leq & y_1(\bm{z}) \leq \min\{2, z_0 - y_2(\bm{z}) + \frac{5}{2}\}, \\
\max\{\frac{3}{2}, 1 + z_0 \} \leq & y_2(\bm{z}) \leq \min \{2, \frac{3}{2} + z_0 \}. \label{eq: ex-bounds-PWL-2}
\end{align}
\end{subequations}
Variables $y_1(\bm{z})$ and $y_2(\bm{z})$ have not been eliminated in the objective. Therefore, any decision rule satisfying \eqref{eq: ex-bounds-PWL} is ARF to \eqref{eq: ex-PWL} but need not be ARO or PARO. 

Variable $y_1(\bm{z})$ does not appear in the bounds of $y_2(\bm{z})$, so we can consider its individual contribution to the objective value. The objective coefficient of $y_1(\bm{z})$ is negative, so for any $\bm{z}$ (including the worst-case) the best possible contribution of $y_1(\bm{z})$ to the objective value is achieved if we set $y_1(\bm{z})$ equal to its upper bound. Therefore, for the given $x^{\ast}$, we have the following PWL PARO extension as a function of $y_2(\bm{z})$:
\begin{align*}
y_1^{\ast}(\bm{z}) = \min\{2, z_0 - y_2(\bm{z}) + \frac{5}{2}\}.
\end{align*} 
Now that $y_1(\bm{z})$ is eliminated in the objective value, it remains to find the optimal decision rule for $y_2(\bm{z})$. Variable $y_2(\bm{z})$ now appears directly in the objective~\eqref{eq: ex-PWL-0} and through its occurence in the decision rule $y_1^{\ast}(\bm{z})$. For fixed $\bm{z}$, the optimal $y_2(\bm{z})$ is determined by solving
\begin{align*} 
\min_{y_2(\bm{z})} &- \min\{2, z_0 - y_2(\bm{z}) + \frac{5}{2}\} + y_2(\bm{z}), \\
\text{s.t.}~&~ \max\{\frac{3}{2}, 1 + z_0 \} \leq y_2(\bm{z}) \leq \min \{2, \frac{3}{2} + z_0 \}.
\end{align*}
One can easily see that the objective is increasing in $y_2(\bm{z})$, so for any $\bm{z}$ the best possible contribution of $y_2(\bm{z})$ to the objective value is achieved if we set $y_2(\bm{z})$ equal to its lower bound. Thus, for the given $x^{\ast}$, we have the following PWL PARO extension:
\begin{align*}
y_2^{\ast}(\bm{z}) = \max\{\frac{3}{2}, 1 + z_0 \}.
\end{align*}
Note that plugging in a PWL argument in a PWL function retains the piecewise linear structure. Therefore, we also obtain the following PWL PARO extension for $y_1^{\ast}(\bm{z})$:
\begin{align*}
y_1^{\ast}(\bm{z}) = \min\{2, z_0 - \max\{\frac{3}{2}, 1 + z_0 \} + \frac{5}{2}\}.
\end{align*} 

Note that we did not move adaptive variables in the objective to the constraints using an epigraph variable, as was done in \Cref{ex: toy-problem-RT-FME}. Using an epigraph variable for the objective ensures that each decision rule satisfying the bounds is worst-case optimal, but prevents from comparing performance in other scenarios. Naturally, computationally it has the major advantage that it remains a linear program.
\end{example}

\citet{Bemporad03} show worst-case optimality of PWL decision rules for right-hand polyhedral uncertainty, i.e., ARO PWL decision rules in our terminology. \citet[Theorem 3]{Zhen18} generalize this to problems of form \eqref{eq: P} with particular assumptions on the uncertainty set. These decision rules are general PWL in $\bm{z}$ for all variables $y_j$, $j\neq l$, where $y_l$ is the last eliminated variable in the FME procedure. The decision rule is convex or concave PWL in $y_l$. These results solely consider the performance of PWL decision rules in the worst-case. \Cref{ex: PWL-PARO-extension} illustrates that for any ARF $\bm{x}$ there exists a PWL PARO extension $\bm{y}(\cdot)$. The lemma below formalizes this claim.
\begin{lemma} \label{lemma: PARO-extension-PWL}
For any $\bm{x}$ that is ARF to \eqref{eq: P} there exists a PARO extension $\bm{y}(\bm{z})$ that is PWL in $\bm{z}$.
\end{lemma}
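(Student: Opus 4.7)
My plan is to construct, for the given ARF $\bm{x}$, a decision rule $\bm{y}^\ast(\bm{z})$ that is PWL in $\bm{z}$ and attains the pointwise (scenariowise) minimum of $\bm{d}^\top \bm{y}(\bm{z})$ over all ARF $\bm{y}$. Pointwise optimality immediately yields the PARO-extension property: any other ARF $\bar{\bm{y}}(\cdot)$ satisfies $\bm{d}^\top \bar{\bm{y}}(\bm{z}) \geq \bm{d}^\top \bm{y}^\ast(\bm{z})$ for every $\bm{z} \in U$, which rules out any strict improvement in any scenario; and the ARO part of \Cref{def: PARO-ext-y} is automatic whenever $\bm{x}$ itself is ARO, since $\bm{y}^\ast$ must then also attain $\text{OPT}$ in the worst case. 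The construction mirrors \Cref{ex: PWL-PARO-extension} and relies on FME applied to the constraints only (the uncertain objective is kept explicit, as an epigraph reformulation would conflate non-worst-case performance).

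First, I would apply FME to \eqref{eq: P} in the order $y_1, y_2, \ldots, y_{n_y}$. At elimination step $j$ one obtains nested bounds
$$L_j(\bm{x}, \bm{z}, y_{j+1}, \ldots, y_{n_y}) \leq y_j \leq U_j(\bm{x}, \bm{z}, y_{j+1}, \ldots, y_{n_y}), \qquad \forall \bm{z} \in U,$$
with $L_j$ a max and $U_j$ a min of affine functions of their arguments, by \Cref{lemma: y-representation}. After all $n_y$ eliminations, the residual system on $(\bm{x},\bm{z})$ forms $\mathcal{X}_{\text{FME}}=\mathcal{X}$ (Zhen et al.), so the given ARF $\bm{x}$ satisfies these constraints for every $\bm{z} \in U$, guaranteeing that the nested bounds above are consistent at every $\bm{z}$.

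Next, I would reconstruct $\bm{y}^\ast(\bm{z})$ by backward dynamic programming on the objective. Define $V_0 \equiv 0$ and, for $j = 1, \ldots, n_y$,
$$V_j(\bm{z}, y_{j+1}, \ldots, y_{n_y}) := \min_{y_j}\bigl\{\, d_j\, y_j + V_{j-1}(\bm{z}, y_j, y_{j+1}, \ldots, y_{n_y}) : L_j \leq y_j \leq U_j\,\bigr\}.$$
By induction on $j$, each $V_j$ is PWL in all its arguments: the integrand $d_j y_j + V_{j-1}$ is PWL in $y_j$ (jointly PWL in the rest), and the minimum of a univariate PWL function over an interval with PWL endpoints is attained at an endpoint or an interior kink, each of which is itself a PWL function of the remaining parameters. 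Hence $V_{n_y}(\bm{z})$ is PWL in $\bm{z}$, and back-substitution of the argmin selections yields $y_{n_y}^\ast(\bm{z}), y_{n_y-1}^\ast(\bm{z}), \ldots, y_1^\ast(\bm{z})$, each PWL in $\bm{z}$ and jointly attaining the scenariowise minimum of $\bm{d}^\top \bm{y}$.

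The main obstacle I anticipate is the careful PWL bookkeeping in the backward recursion — specifically, showing that the argmin at every DP step can be chosen as a PWL (not merely measurable) function of $\bm{z}$ even when the integrand is flat on a subinterval of $[L_j, U_j]$. This can be handled with a consistent tie-breaking convention (e.g.\ always selecting the lower endpoint among optima), and the overall argument amounts to a hand-crafted multiparametric LP result delivered through the FME bounds, of which the computation in \Cref{ex: PWL-PARO-extension} is a concrete instance.
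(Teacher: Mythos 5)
Your construction is essentially the paper's first proof of this lemma (the one ``via FME''): apply FME to the constraints only, keep the uncertain objective explicit, and then back-substitute through the nested bounds, choosing each $y_j$ as a PWL pointwise minimizer of its contribution to $\bm{d}^{\top}\bm{y}$ --- your backward recursion formalizes exactly the argument of \Cref{ex: PWL-PARO-extension}, and scenariowise optimality of the resulting rule is also how the paper concludes the PARO-extension property. (The paper additionally gives a second, alternative proof via basic solutions in linear optimization, i.e.\ the multiparametric-LP viewpoint you allude to at the end, but your route matches the FME proof.)
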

We present two proofs to \Cref{lemma: PARO-extension-PWL}; one via FME using the idea of \Cref{ex: PWL-PARO-extension}, and one via basic solutions in linear optimization.
\begin{proof}[Proof of \Cref{lemma: PARO-extension-PWL} via FME.]
See \Cref{app: proof-PARO-extension-PWL-FME}.
\end{proof}
\begin{proof}[Proof of \Cref{lemma: PARO-extension-PWL} via linear optimization.]
See \Cref{app: proof-PARO-extension-PWL-LO}.
\end{proof}
In both proofs the constructed decision rule is in fact optimal for \emph{all} scenarios in the uncertainty set. As long as $\bm{x}$ is fixed, this is necessary for PARO solutions. The following theorem establishes the existence of PARO PWL decision rules.
\begin{theorem} \label{cor: PARO-PWL}
If $\mathcal{X}$ is compact, there exists a PARO $\bm{y}(\cdot)$ for \eqref{eq: P} such that $\bm{y}(\bm{z})$ is PWL in $\bm{z}$.
\end{theorem}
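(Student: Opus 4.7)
The plan is to stitch together \Cref{thm: x-PARO} and \Cref{lemma: PARO-extension-PWL} via the stronger scenario-wise optimality property highlighted in the remark just after the lemma. The natural candidate will be a pair $(\bm{x}^*, \bm{y}^*(\cdot))$ where $\bm{x}^*$ is the PARO Stage-1 decision guaranteed by the former and $\bm{y}^*(\cdot)$ is the PWL PARO extension of $\bm{x}^*$ guaranteed by the latter.

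First, I will invoke \Cref{thm: x-PARO}: compactness of $\mathcal{X}$ yields a PARO Stage-1 decision $\bm{x}^*$, which by \Cref{def: PARO-x} is paired with some decision rule $\tilde{\bm{y}}(\cdot)$ so that $(\bm{x}^*, \tilde{\bm{y}}(\cdot))$ is jointly PARO to \eqref{eq: P}. Since $\bm{x}^*$ is in particular ARF, \Cref{lemma: PARO-extension-PWL} then produces a PARO extension $\bm{y}^*(\bm{z})$ of $\bm{x}^*$ that is piecewise linear in $\bm{z}$. The task reduces to showing that the pair $(\bm{x}^*, \bm{y}^*(\cdot))$ is itself jointly PARO, since this is exactly what \Cref{def: PARO-y} requires in order to call $\bm{y}^*(\cdot)$ a PARO decision rule.

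The main step, and where the argument is not entirely mechanical, is bridging ``PARO extension'' (non-domination only against alternative rules for the fixed $\bm{x}^*$) with ``joint PARO'' (non-domination against arbitrary pairs $(\bar{\bm{x}}, \bar{\bm{y}}(\cdot))$). For this I will leverage the observation recorded immediately after the two proofs of \Cref{lemma: PARO-extension-PWL}: the decision rule constructed there is in fact optimal in every scenario, so $\bm{d}^\top \bm{y}^*(\bm{z}) \leq \bm{d}^\top \bm{y}(\bm{z})$ for every $\bm{z}\in U$ and every $\bm{y}(\cdot)$ with $(\bm{x}^*, \bm{y}(\cdot))$ ARF. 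In particular, $\bm{d}^\top \bm{y}^*(\bm{z}) \leq \bm{d}^\top \tilde{\bm{y}}(\bm{z})$ for all $\bm{z}\in U$. Suppose toward contradiction that some ARO pair $(\bar{\bm{x}}, \bar{\bm{y}}(\cdot))$ dominates $(\bm{x}^*, \bm{y}^*(\cdot))$; chaining the two families of inequalities immediately shows that the same pair dominates $(\bm{x}^*, \tilde{\bm{y}}(\cdot))$ as well, contradicting the PARO status of the latter. Hence $(\bm{x}^*, \bm{y}^*(\cdot))$ is PARO and the theorem follows.

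The hard part is really the conceptual gap described above: without the pointwise optimality remark, a PARO extension in isolation cannot rule out domination by pairs with a different Stage-1 decision, so the scenario-wise comparison against the pre-existing PARO witness $\tilde{\bm{y}}(\cdot)$ is what makes the transitivity argument go through. All other ingredients are direct invocations of prior results.
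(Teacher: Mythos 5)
Your proposal is correct and follows the paper's proof exactly: invoke \Cref{thm: x-PARO} for a PARO Stage-1 decision and \Cref{lemma: PARO-extension-PWL} for a PWL PARO extension of it. The only difference is that the paper dismisses the final step as immediate, whereas you correctly spell out why it holds, using the scenario-wise optimality of the constructed extension to transfer any hypothetical domination onto the original PARO witness pair.
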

\begin{proof}
According to \Cref{thm: x-PARO} there exists a PARO $\bm{x}$, and according to \Cref{lemma: PARO-extension-PWL} there exists a PARO extension $\bm{y}(\cdot)$ for this $\bm{x}$ that is PWL in $\bm{z}$. It immediately follows that $\bm{y}(\cdot)$ is PARO.
\end{proof}

\section{Constructing PARO Solutions - Special Cases} \label{sec: constructive}
The methods used in the existence proofs of \Cref{sec: properties} are not computationally tractable, i.e., they provide little guidance for finding PARO solutions in practice. In this section, we discuss several practical methods for finding PARO solutions for special cases of problem \eqref{eq: P}. Several results assume that $\bm{d} = \bm{0}$, i.e., Stage-2 variables do not appear in the objective. This can be a limiting assumption, although it is satisfied in applications where ARO is used for maintaining robust feasibility \citep{Jabr13, Polinder19}. In the next section, a general methodology is presented that does not rely on this assumption.

First, we consider special cases of problem \eqref{eq: P} where particular decision rule structures guarantee PARO solutions in case $\bm{d} = \bm{0}$. Subsequently, we show how for fixed $\bm{x}$ we can check whether $\bm{y}(\cdot)$ is a PARO extension. After that, we consider an application of the finite subset approach of \citet{Hadjiyiannis11}. Lastly, we present a method for finding a PARO solution if $\bm{d} = \bm{0}$ and a convex hull description of the uncertainty set is available.

\subsection{Known worst-case optimal decision rules} \label{sec: PARO-decision-rule-structure}
In \Cref{app: decision-rules}, using an FME lens, we analyze various decision rule structures, considering special uncertainty set forms, including so-called constraintwise, hybrid and block forms. This enables us to prove that an ARF decision rule with a particular structure exists for \emph{every} ARF $\bm{x}$, instead of solely proving it is optimal for an ARO $\bm{x}$. For example, for ARO problems with hybrid uncertainty, for any ARF Stage-1 decision there exists an ARF decision that depends only on the non-constraintwise uncertain parameter. It turns out that, in case there are no adaptive variables in the objective, PRO solutions to the static problem obtained after plugging in that decision rule structure are PARO solutions to the original ARO problem. To formalize this, let $\bm{y}(\bm{z}) = f_{\bm{w}}(\bm{z})$ be a decision rule with known form $f$ (e.g., linear or quadratic) and finite number of parameters $\bm{w} \in \mathbb{R}^p$, such that $f_{\bm{w}}(\bm{z}) \in \mathcal{R}^{L,n_y}$ for any $\bm{w}$. 
\begin{theorem} \label{theorem: DR-PARO}
Let $P$ denote an ARO problem of form \eqref{eq: P} with $\bm{d} = \bm{0}$ and where for any ARF $\bm{x}$ there exists an ARF decision rule of form $\bm{y}^{\ast}(\bm{z}) = f_{\bm{w}}(\bm{z})$ for some $\bm{w}$. Then any $\bm{x}^{\ast}$ that is PRO to the static robust optimization problem obtained after plugging in decision rule structure $f_{\bm{w}}(\bm{z})$ is PARO to $P$.
\end{theorem}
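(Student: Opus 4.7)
The plan is to exploit the fact that $\bm{d}=\bm{0}$ collapses the PARO comparison entirely to the Stage-1 variables, and then to show that the restriction to decision rules of form $f_{\bm{w}}$ neither shrinks the set of feasible $\bm{x}$ nor alters the worst-case objective value, so PRO in the restricted static problem transfers directly to PARO in $P$.

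First I would verify two equivalences. Let $P'$ denote the static RO problem obtained by plugging in $\bm{y}(\bm{z})=f_{\bm{w}}(\bm{z})$, with $\bm{w}$ now an additional decision variable. Since $f_{\bm{w}}(\bm{z})\in\mathcal{R}^{L,n_y}$ for every $\bm{w}$, any $(\bm{x},\bm{w})$ feasible in $P'$ yields an ARF pair in $P$. Conversely, the standing hypothesis is that for any $\bm{x}$ that is ARF to $P$ there exists a $\bm{w}$ with $(\bm{x},f_{\bm{w}})$ ARF to $P$. Hence the feasible sets of $\bm{x}$ in $P$ and in $P'$ coincide. Because $\bm{d}=\bm{0}$, the objective $\max_{\bm{z}\in U}\bm{c}(\bm{z})^{\top}\bm{x}$ does not depend on $\bm{w}$ (nor on any decision rule), so the worst-case objective of any ARF $\bm{x}$ is the same in $P$ and in $P'$. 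In particular, the optima of $P$ and $P'$ coincide, and $\bm{x}$ is ARO to $P$ iff $\bm{x}$ is RO to $P'$.

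Next I would construct the Stage-2 decision rule witnessing PARO of $\bm{x}^{\ast}$. Since $\bm{x}^{\ast}$ is PRO (hence RO) to $P'$, it is ARF to $P$, so by hypothesis there is a $\bm{w}^{\ast}$ such that $\bm{y}^{\ast}(\bm{z}):=f_{\bm{w}^{\ast}}(\bm{z})$ makes $(\bm{x}^{\ast},\bm{y}^{\ast}(\cdot))$ ARF to $P$. Combined with the equality of worst-case objective values above, $(\bm{x}^{\ast},\bm{y}^{\ast}(\cdot))$ is ARO to $P$. It remains to show it is not Pareto-dominated. Suppose, for contradiction, that some ARO pair $(\bar{\bm{x}},\bar{\bm{y}}(\cdot))$ dominates it. Because $\bm{d}=\bm{0}$, the dominance inequalities of \Cref{def: PARO} reduce to
\begin{align*}
\bm{c}(\bm{z})^{\top}\bar{\bm{x}}&\leq \bm{c}(\bm{z})^{\top}\bm{x}^{\ast},\quad \forall \bm{z}\in U,\\
\bm{c}(\bar{\bm{z}})^{\top}\bar{\bm{x}}&<\bm{c}(\bar{\bm{z}})^{\top}\bm{x}^{\ast},\quad\text{for some }\bar{\bm{z}}\in U.
\end{align*}
Since $\bar{\bm{x}}$ is ARO (in particular ARF) to $P$, the hypothesis yields a $\bar{\bm{w}}$ with $(\bar{\bm{x}},f_{\bar{\bm{w}}})$ ARF to $P$, and by the worst-case equality this pair is also RO to $P'$. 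Therefore $\bar{\bm{x}}$ is a robustly optimal solution of $P'$ that Pareto-dominates $\bm{x}^{\ast}$ in $P'$, contradicting the assumption that $\bm{x}^{\ast}$ is PRO to $P'$.

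I expect no significant obstacle. The only place to exercise care is the quantifier in \Cref{def: PARO-x}: it suffices to produce one decision rule making the pair PARO, and the natural choice $\bm{y}^{\ast}=f_{\bm{w}^{\ast}}$ works precisely because $\bm{d}=\bm{0}$ renders the dominance condition independent of $\bm{y}(\cdot)$. The hypothesis that an ARF decision rule of form $f_{\bm{w}}$ exists for every ARF $\bm{x}$---established in \Cref{lemma: hybrid-ARF}, \Cref{lemma: block-ARF}, and \Cref{lemma: simplex-LDR-ARF}---is used twice: once for $\bm{x}^{\ast}$ to produce the witnessing rule, and once for the putative dominator $\bar{\bm{x}}$ to certify its feasibility in $P'$.
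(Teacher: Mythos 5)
Your proof is correct and takes essentially the same route as the paper's: with $\bm{d}=\bm{0}$ the objective depends only on $\bm{x}$, the hypothesis that every ARF $\bm{x}$ admits an ARF rule of form $f_{\bm{w}}$ makes the Stage-1 feasible (and hence worst-case optimal) sets of $P$ and the restricted static problem coincide, and any dominating ARO pair would project to a robustly optimal solution of the static problem that Pareto-dominates $\bm{x}^{\ast}$, contradicting PRO. No gaps.
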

\begin{proof}
See \Cref{app: proof-DR-PARO}.
\end{proof}

Due to \Cref{lemma: hybrid-ARF,lemma: block-ARF,lemma: simplex-LDR-ARF}, the following result immediately follows for hybrid, block and simplex uncertainty.
\begin{corollary} \label{cor: SDR-LDR-PARO}
\leavevmode
\begin{corlist}
\item Let $P_{\text{hybrid}}$ denote an ARO problem of form \eqref{eq: P} with $\bm{d}=\bm{0}$ and hybrid uncertainty. Let $Q$ denote the static robust optimization problem obtained from $P_{\text{hybrid}}$ by plugging in a decision rule structure that depends only on the non-constraintwise parameter. Any $\bm{x}^{\ast}$ that is PRO to $Q$ is PARO to $P_{\text{hybrid}}$.\label{cor: hybrid-PARO}
\item Let $P_{\text{block}}$ denote an ARO problem of form \eqref{eq: P} with $\bm{d}=\bm{0}$ and block uncertainty. Let $Q$ denote the static robust optimization problem obtained from $P_{\text{block}}$ by plugging in a decision rule structure where adaptive variable $\bm{y}_{(v)}^{\ast}(\cdot)$ depend only on $\bm{z}_{(v)}$ for all $v=1,\dotsc,V$. Then any $\bm{x}^{\ast}$ that is PRO to $Q$ is PARO to $P_{\text{block}}$. \label{cor: block-PARO}
\item Let $P_{\text{simplex}}$ denote an ARO problem of form \eqref{eq: P} with $\bm{d}=\bm{0}$ and a simplex uncertainty set, i.e., $U = \text{Conv}(\bm{z}^1,\dotsc,\bm{z}^{L+1})$, with $\bm{z}^{j} \in \mathbb{R}^{L}$ such that $\bm{z}^1,\dotsc,\bm{z}^{L+1}$ are affinely independent. Let $Q$ denote the static robust optimization problem obtained from $P_{\text{simplex}}$ by plugging in an LDR structure. Then any $\bm{x}^{\ast}$ that is PRO to $Q$ is PARO to $P_{\text{simplex}}$. \label{cor: simplex-LDR-PARO}
\end{corlist}
\end{corollary}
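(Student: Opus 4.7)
The plan is to apply Theorem~\ref{theorem: DR-PARO} once for each of the three parts, using the corresponding structural feasibility lemma from Section~\ref{sec: decision-rules} to discharge the hypothesis of the theorem. Theorem~\ref{theorem: DR-PARO} requires that every ARF Stage-1 decision $\bm{x}$ admit an ARF decision rule of the prescribed form; given that hypothesis, any PRO solution of the static restriction is PARO for the ARO problem. So the work reduces to matching each restriction class with the appropriate lemma.

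For part (i), I would invoke Lemma~\ref{lemma: hybrid-ARF}: for any ARF $\bm{x}$ in a problem with hybrid uncertainty, there exists an ARF $\bm{y}(\cdot)$ depending only on $\bm{\hat{z}}$. This is precisely the hypothesis of Theorem~\ref{theorem: DR-PARO} with $f_{\bm{w}}$ ranging over the family of decision rules in $\bm{\hat{z}}$ alone, so the static restriction $Q$ inherits the property that every PRO $\bm{x}^{\ast}$ is PARO in $P_{\text{hybrid}}$. Part (ii) proceeds identically with Lemma~\ref{lemma: block-ARF}, using the family of decision rules whose $v$-th block $\bm{y}_{(v)}(\cdot)$ depends only on $\bm{z}_{(v)}$. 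Part (iii) uses Lemma~\ref{lemma: simplex-LDR-ARF}, which guarantees an ARF LDR $\bm{y}(\bm{z}) = \bm{w}_0 + \sum_{k=1}^{L} \bm{w}_k z_k$ for every ARF $\bm{x}$ under simplex uncertainty; this is a genuine finite-dimensional parameterization and slots directly into Theorem~\ref{theorem: DR-PARO}.

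The main (very minor) subtlety is that in parts (i) and (ii) the admissible decision rule class is infinite-dimensional, whereas Theorem~\ref{theorem: DR-PARO} is phrased with a finite parameter $\bm{w} \in \mathbb{R}^p$. This is not an obstacle because the proof of the theorem only uses two properties of the rule family: (a) closure under the ARF replacement provided by the cited lemma, and (b) the fact that $\bm{d} = \bm{0}$, which makes the objective independent of the specific rule chosen. Both properties continue to hold for the broader restrictions in (i) and (ii); the mechanics of the argument (if $\bm{x}^{\ast}$ were dominated in the original problem by some ARO pair $(\bar{\bm{x}}, \bar{\bm{y}}(\cdot))$, replace $\bar{\bm{y}}$ by a rule of the prescribed structure using the feasibility lemma, and observe that the objective comparison is unaffected since it depends only on $\bm{x}$) go through verbatim. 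Thus the three parts follow with no additional technical ingredients beyond a short appeal to Theorem~\ref{theorem: DR-PARO} and the respective lemma.
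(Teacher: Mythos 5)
Your proposal is correct and follows essentially the same route as the paper: the result is obtained by applying \Cref{theorem: DR-PARO} to each uncertainty class, with the hypothesis that every ARF $\bm{x}$ admits an ARF rule of the prescribed structure discharged by \Cref{lemma: hybrid-ARF,lemma: block-ARF,lemma: simplex-LDR-ARF} respectively. Your remark that the finite parameterization $\bm{w}\in\mathbb{R}^p$ in the theorem statement is inessential (since with $\bm{d}=\bm{0}$ the argument only needs the replacement property and objective-independence of the rule) is a sensible reading and consistent with how the paper treats the three cases uniformly.
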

\begin{proof}
See \Cref{app: proof-SDR-LDR-PARO}.
\end{proof}
The case with constraintwise uncertainty is a special case of \Cref{cor: hybrid-PARO}. The case with one uncertain parameter is a special case of \Cref{cor: simplex-LDR-PARO}. Note that, unlike for worst-case optimization in \Cref{app: decision-rules}, it is necessary that $\bm{d}=\bm{0}$, because our definition of PRO involves the term $\bm{d}$. If $\bm{d} \neq \bm{0}$, the results above do not hold. This is also illustrated in \Cref{ex: toy-problem-RT} in \Cref{sec: introduction}.

The results of \Cref{cor: SDR-LDR-PARO} can be combined. For example, for problems with both simplex uncertainty and hybrid uncertainty,  \Cref{cor: hybrid-PARO} and \Cref{cor: simplex-LDR-PARO} together imply that one needs to consider only decision rules that are affine in the non-constraintwise parameter, if there are no adaptive variables in the objective. Simplex uncertainty sets arise in a variety of applications and can be used to approximate other uncertainty sets \citep{BenTal20}.

\subsection{Check whether a decision rule is a PARO extension}
If the Stage-1 decision $\bm{x}$ is fixed, one can verify whether the decision rule $\bm{y}$ is a PARO extension (\Cref{def: PARO-ext-y}) as follows.
\begin{lemma} \label{lemma: PARO-x-unique}
Let $(\bm{x}^{\ast}, \bm{y}^{\ast}(\cdot))$ be an ARO solution to \eqref{eq: P}. Consider the problem
\begin{subequations} \label{eq: check-PARO-extension}
\begin{align}
\max_{\bm{z},\bm{y}} ~&~ \bm{d}^{\top} (\bm{y}^{\ast}(\bm{z}) - \bm{y}), \\
\text{s.t.}~&~ \bm{A}(\bm{z}) \bm{x}^{\ast} + \bm{B}\bm{y} \leq \bm{r}(\bm{z}), \\
~&~ \bm{z} \in U.
\end{align}
\end{subequations}
If the optimal objective value is zero, $\bm{y}^{\ast}(\cdot)$ is a PARO extension of $\bm{x}^{\ast}$.  If the objective value is positive, then $\bm{y}^{\ast}(\cdot)$ is not a PARO extension of $\bm{x}^{\ast}$ and the suboptimality of $\bm{y}^{\ast}(\cdot)$ is bounded by the optimal objective value. 
\end{lemma}
\begin{proof}
See \Cref{app: proof-PARO-x-unique}.
\end{proof}
If the optimal value to \eqref{eq: check-PARO-extension} is positive and $(\bm{\bar{z}},\bm{\bar{y}})$ denotes an optimal solution to \eqref{eq: check-PARO-extension}, then $\bm{\bar{z}}$ is a scenario where the suboptimality bound is attained, and $\bm{\bar{y}}$ is an optimal decision for this scenario. Also, note that if the optimal value of \eqref{eq: check-PARO-extension} equals zero, the pair $(\bm{x}^{\ast}, \bm{y}^{\ast}(\cdot))$ need not be PARO; there may be a different pair $(\bm{\hat{x}}, \bm{\hat{y}}(\cdot))$ that dominates the current pair.

\subsection{Unique ARO solution on finite subset of scenarios is PARO}
The finite subset approach of \citet{Hadjiyiannis11} can be used in a PARO setting as well. If the lower bound problem has a unique solution and this solution is feasible to the original problem, it is a PARO solution to the original problem. This is formalized in \Cref{lemma: x-unique-Hadjiyiannis}. 
\begin{lemma} \label{lemma: x-unique-Hadjiyiannis}
Let $S = \{\bm{z}^1,\dotsc,\bm{z}^N\}$ denote a finite set of scenarios, $S \subseteq U$. Let $\bm{x}^{\ast}$ be the unique ARO here-and-now decision for which there exist $\bm{y}^{1\ast},\dotsc,\bm{y}^{N\ast}$ such that $(\bm{x}^{\ast},\bm{y}^{1\ast},\dotsc,\bm{y}^{N\ast})$ is an optimal solution to
\begin{subequations} \label{eq: x-unique-Hadjiyiannis}
\begin{align}
\min_{\bm{x},\bm{y}^1,\dotsc,\bm{y}^N} ~&~ \max_{i=1,\dotsc,N} \{\bm{c}(\bm{z}^i)^{\top} \bm{x} + \bm{d}^{\top} \bm{y}^i\}, \label{eq: x-unique-Hadjiyiannis-1}\\
\text{s.t.}~&~ \bm{A}(\bm{z}^i) \bm{x} + \bm{B} \bm{y}^i \leq \bm{r(\bm{z}^i)},~\forall i=1,\dotsc,N. \label{eq: x-unique-Hadjiyiannis-2}
\end{align}
\end{subequations}
Then $\bm{x}^{\ast}$ is PARO to \eqref{eq: P}.
\end{lemma}
\begin{proof}
Let $(\bm{\bar{x}}, \bm{\bar{y}}(\cdot))$ be ARO to \eqref{eq: P} with $\bm{\bar{x}}$ unequal to $\bm{x}^{\ast}$. Then the solution $(\bm{\bar{x}},\bm{\bar{y}}(\bm{z}^1),\dotsc,\bm{\bar{y}}(\bm{z}^N))$ is feasible to \eqref{eq: x-unique-Hadjiyiannis}. Because $\bm{x}^{\ast}$ is the unique here-and-now ARO decision that can be extended to an optimal solution of \eqref{eq: x-unique-Hadjiyiannis}, it holds that
\begin{align*}
\bm{c}(\bm{z}^i)^{\top} \bm{x}^{\ast} + \bm{d}^{\top} \bm{y}^{i\ast}  <  \bm{c}(\bm{z}^i)^{\top} \bm{\bar{x}} + \bm{d}^{\top} \bm{\bar{y}}(\bm{z}^i) \text{ for some } \bm{z}^i \in S.
\end{align*} 
That is, for each $\bm{\bar{x}}$ that is ARO to \eqref{eq: P} and unequal to $\bm{x}^{\ast}$ there is at least one scenario $\bm{z}^i$ in $U$ for which $\bm{x}^{\ast}$ outperforms $\bm{\bar{x}}$. This implies that $\bm{x}^{\ast}$ is PARO to \eqref{eq: P}.
\end{proof}
Objective \eqref{eq: x-unique-Hadjiyiannis-1} optimizes for the worst-case scenario $\bm{z}^i$ in set $S$, and \eqref{eq: x-unique-Hadjiyiannis-2} ensures that $\bm{x}^{\ast}$ is feasible for all scenarios in $S$. It should be noted that requiring $\bm{x}^{\ast}$ to be both ARO to \eqref{eq: P} and a unique optimal solution to \eqref{eq: x-unique-Hadjiyiannis} is quite restrictive.

\subsection{Convex hull description of scenario set} \label{sec: PARO-convex-hull}
Next, consider the case where the uncertainty set is given by the convex hull of a finite set of points, i.e., $U = \text{Conv}(\bm{z}^1,\dotsc,\bm{z}^N)$. Additionally assume that there are no adaptive variables in the objective. Then \eqref{eq: P} is equivalent to
\begin{subequations} \label{eq: P-finite}
\begin{align}
\min_{\bm{x},\bm{y}^1,\dotsc,\bm{y}^N} ~&~ \max_{i=1,\dotsc,N} \bm{c}(\bm{z}^i)^{\top} \bm{x}, \\
\text{s.t.}~&~ \bm{A}(\bm{z}^i)\bm{x} + \bm{B}\bm{y}^i \leq \bm{r}(\bm{z}^i),~\forall i=1,\dotsc,N.
\end{align}
\end{subequations}
Analogous to \citet{Iancu14}, after finding an ARO solution we can perform an additional step by optimizing the set of ARO solutions over a scenario in the relative interior (denoted $\text{ri}(\cdot)$) of the convex hull of our finite set of scenarios. This produces a PARO here-and-now solution to \eqref{eq: P}.
\begin{lemma} \label{lemma: PARO-extreme-points-d0}
Let $\bm{d}=\bm{0}$. Let $U = \text{Conv}(\bm{z}^1,\dotsc,\bm{z}^N)$, $\bm{\bar{z}} \in \text{ri}(U)$ and let $(\bm{x}^{\ast},\bm{y}^{1\ast},\dotsc,\bm{y}^{N\ast})$ denote an optimal solution to 
\begin{subequations}\label{eq: aux-d0}
\begin{align}
\min_{\bm{x},\bm{y}^1,\dotsc,\bm{y}^N} ~&~ \bm{c}(\bm{\bar{z}})^{\top} \bm{x},\\
\text{s.t.}~&~ \bm{A}(\bm{z}^i) \bm{x} + \bm{B} \bm{y}^i \leq \bm{r(\bm{z}^i)},~\forall i=1,\dotsc,N, \label{eq: aux-d0-2}\\
~&~ \bm{c}(\bm{z^i})^{\top} \bm{x} + \bm{d}^{\top} \bm{y}^i \leq \text{OPT},~\forall i=1,\dotsc,N, \label{eq: aux-d0-3}
\end{align}
\end{subequations}
where OPT denotes the optimal objective value of \eqref{eq: P-finite}. Then $\bm{x}^{\ast}$ is PARO to \eqref{eq: P}.
\end{lemma}
\begin{proof}
See \Cref{app: proof-PARO-extreme-points-d0}.
\end{proof}

\section{Constructing PARO Solutions - General Methodology} \label{sec: constructive-general}
Adaptive robust optimization problems of form \eqref{eq: P} are in general NP-hard \citep{Guslitser02}, and finding ARO solutions is still the focus of ongoing research \citep{Yanikoglu19}. Thus, finding a general methodology that, given an ARO solution to \eqref{eq: P}, can produce a PARO solution is not an easy task either. In this section we present a general methodology for finding PARO solutions to ARO problems of form \eqref{eq: P},  which involves solving a sequence of bilinear optimization problems; by solving these heuristically, the method can be used to find `approximate' PARO solutions in practice. First we describe the methodology, after that we discuss how to solve the bilinear optimization problems.

\subsection{Constraint \& column generation procedure for finding a PARO solution} \label{sec: CCG}
The starting point is the constraint-and-column generation (C\&CG) method of \citep{Zeng13} for finding ARO solutions; we first briefly describe this method. Let $M \subseteq{U}$ denote a finite set of scenarios, e.g., solely the nominal scenario. Consider the following LP:
\begin{subequations} \label{eq: P1}
\begin{align}
(P_1(M))~~~~~~\min_{\bm{x},\bm{y}^i,\mu} ~&~ \mu, \label{eq: P1-1}\\
\text{s.t.} ~&~ \mu \geq \bm{c}(\bm{z}^i)^{\top} \bm{x} + \bm{d}^{\top} \bm{y}^i,~~\forall i=1,\dotsc,|M|, \label{eq: P1-2}\\ 
~&~ \bm{A}(\bm{z}^i)\bm{x} + \bm{B}\bm{y}^i \leq \bm{r}(\bm{z}^i),~~\forall i=1,\dotsc,|M|. \label{eq: P1-3}
\end{align}
\end{subequations}
The Stage-1 solution $\bm{\hat{x}}$ to $P_1(M)$ is ARO for the set $M$, but not neccesarily ARO (or ARF) for the true uncertainty set $U$. Define the subproblem
\begin{align} \label{eq: Q}
(Q(\bm{\hat{x}}) ~~~~ \max_{\bm{z} \in U}&~ \bm{c}(\bm{z})^{\top}\bm{\hat{x}} + \min_{\bm{y}} \{ \bm{d}^{\top} \bm{y} ~|~ \bm{A}(\bm{z})\bm{\hat{x}} + \bm{B}\bm{y} \leq \bm{r}(\bm{z}) \},
\end{align}
and let $q(\bm{\hat{x}})$ denote the objective value. Either $q(\bm{\hat{x}})$ is finite with solution $\bm{\hat{z}}$, or a scenario $\bm{\hat{z}}$ is identified where the inner minimization problem is infeasible. In that case set $q(\bm{\hat{x}})$ to $+\infty$ by convention. In either case, the solution $\bm{\hat{z}}$ is a scenario in $U$ where the Stage-1 solution $\bm{\hat{x}}$ to $P_1(M)$ performs worst. If the objective value of $Q(\bm{\hat{x}})$ is higher than the objective value of $P_1(M)$, then $M$ did not contain the worst-case scenario, and $\bm{\hat{x}}$ is not ARO. That is, $\bm{\hat{z}}$ provides either a feasibility or (worst-case) optimality cut. The scenario $\bm{\hat{z}}$ is added to $M$, and the procedure is repeated until both objective values are sufficiently close. \Cref{alg: CCG} provides the pseudocode, see \citet{Zeng13} for more details.

\begin{algorithm}[htb!]
\Begin{
Set $k=0$, set $\epsilon>0$, choose an initial $M^k \subseteq U$ and set boolean \textit{ARO} to \textit{False}\;
\While{not \textit{ARO}}{
Solve problem $P_1(M^k)$. Denote the stage-1 solution by $\bm{x}^k$ and the objective value by $\mu^k$\;
Solve the subproblem $Q(\bm{x}^k)$\;
Denote the objective value by $q(\bm{x}^k)$, denote the solution by $\bm{z}^k$\;
\eIf{$q(\bm{x}^k) \leq \mu^k +\epsilon$}{
	Set \emph{ARO} to \emph{True}\;
	}{
	Set $M^{k+1} =  M^k \cup \{\bm{z}^k\}$\;
	Set $k \leftarrow k+1$\;
	}
}
Return $\bm{x}^{\text{ARO}} := \bm{x}^k$\;
}
\caption{\small C\&CG method of \citep{Zeng13} \label{alg: CCG}}
\end{algorithm}
The result of termination of \Cref{alg: CCG} after $k$ iterations is an ARO solution $\bm{x}^{\text{ARO}} := \bm{x}^k$; additionally let $M^{\text{ARO}}:= M^k$ denote the resulting set of scenarios and let $OPT := \mu^k$ denote the worst-case optimal objective value.

A PARO solution can be found by appending another C\&CG procedure to \Cref{alg: CCG}. In each iteration $k$ of the second C\&CG procedure, we find a candidate Stage-1 solution $\bm{x}^c$ that satisfies three conditions:
\begin{condition}
\leavevmode
\begin{conlist}
\item $\bm{x}^c$ is feasible for all scenarios $\bm{z}^l \in M^k$.  \label{con: CCG-1}
\item $\bm{x}^c$ results in an objective value of at most $v^l(\bm{x}^k,\bm{z}^l)$ for all scenarios $\bm{z}^l \in M^k$, with 
\begin{align}
\hspace*{-0.4cm}
\small 
v(\bm{x}^k,\bm{z}^l) = 
\begin{cases}
\text{OPT} & \forall \bm{z}^l \in M^{\text{ARO}} \\
\bm{c}(\bm{z}^l)^{\top}\bm{x}^k + \min_{\bm{y}} \{\bm{d}^{\top} \bm{y} | \bm{A}(\bm{z}^l)\bm{x}^k + \bm{B}\bm{y} \leq \bm{r}(\bm{z}^l) \}
& \forall  \bm{z}^l \in M^k \backslash M^{\text{ARO}},
\end{cases}
\end{align} 
i.e., $\bm{x}^c$ is feasible on the original scenario set $M^{\text{ARO}}$, and performs at least as good as $\bm{x}^k$ on any scenario subsequently added to $M^k$. \label{con: CCG-2}
\item $\bm{x}^c$ results in a strictly lower objective value than the current Stage 1 solution $\bm{x}^k$ on a new (to be determined) scenario $\bm{z}^c$.  \label{con: CCG-3}
\end{conlist}
\end{condition}
This is achieved by solving the following optimization problem:\footnote{To limit notational burden, we overload the notation $\bm{x}^c$ and $\bm{z}^c$ and use these for both the optimization variables in \eqref{eq: P2} and their values in the optimal solution.}
\begin{subequations} \label{eq: P2}
\small
\begin{align}
\min_{\substack{\bm{z}^c,\bm{x}^c,\bm{y}^c, \\ \{\bm{y}^l\}_{l=1}^{|M^k|}}} ~&~  \max_{\bm{y}^k:\bm{A} \bm{x}^k + \bm{B} \bm{y^k} \leq \bm{r}(\bm{z}^c)} (\bm{c}(\bm{z}^c)^{\top}\bm{x}^c + \bm{d}^{\top}\bm{y}^c) - (\bm{c}(\bm{z}^c)^{\top}\bm{x}^k + \bm{d}^{\top}\bm{y}^k), \\
\text{s.t.}~&~ \bm{A}(\bm{z}^c) \bm{x}^c + \bm{B} \bm{y}^c \leq \bm{r}(\bm{z}^c), \label{eq: P2-2}\\
(P_2(\bm{x}^k,M^k))\hspace{1cm}		   ~&~ \bm{A}(\bm{z}^l) \bm{x}^c + \bm{B} \bm{y}^l \leq \bm{r}(\bm{z}^l),~~\forall l=1,\dotsc,|M^k|, \label{eq: P2-3}\\
	   		   ~&~ \bm{c}(\bm{z}^l)^{\top} \bm{x}^c + \bm{d}^{\top} \bm{y}^l \leq v(\bm{x}^k,\bm{z}^l),~~\forall l=1,\dotsc,|M^k|, \label{eq: P2-4}\\
		   ~&~ \bm{z}^c \in U. \label{eq: P2-5}
\end{align}
\end{subequations}
Constraints \eqref{eq: P2-3} and \eqref{eq: P2-4} enforce \Cref{con: CCG-1,con: CCG-2}, respectively. The objective aims to find the scenario $\bm{z}^c$ where the candidate solution $\bm{x}^c$ most outperforms $\bm{x}^k$. The inner maximization problem finds the optimal recourse decision $\bm{y}^k$ for the current solution $\bm{x}^k$ at $\bm{z}^c$. Constraint \eqref{eq: P2-2} ensures that $\bm{x}^c$ is feasible for the new scenario $\bm{z}^c$. Note that setting $\bm{x}^c = \bm{x}^k$ is feasible and yields objective value $0$, so $P_2(\bm{x}^k,M^k)$ is always feasible and has a nonpositive objective value.

If the objective value of \eqref{eq: P2} is strictly negative, then candidate solution $\bm{x}^c$ satisfies \Cref{con: CCG-1,con: CCG-2,con: CCG-3}, i.e., it dominates $\bm{x}^k$ on set $M^k$. However, because $M^k$ is only a subset of $U$, a candidate solution $\bm{x}^c$ is not neccesarily ARO (or even ARF). Thus, we again solve problem $Q(\bm{x}^c)$; let $q(\bm{x}^c)$ denote its objective value and $\bm{z}^q$ the optimizer. We distinguish two cases:

\begin{enumerate}[leftmargin=*,label=\textbullet]
\item $q(\bm{x}^c) \leq \text{OPT}$. Candidate solution $\bm{x}^c$ is an ARO solution that dominates $\bm{x}^k$ on set $M^k \cup \{\bm{z}^c\}$. For the next iteration of the C\&CG procedure we update the current best solution: $\bm{x}^{k+1} = \bm{x}^c$. The new set of scenarios is $M^{k+1} = M^k \cup \{\bm{z}^c\}$. Further improvements over $\bm{x}^c$ may be possible, so it is not neccesarily PARO. In preparation of the next iteration, compute $v(\bm{x}^{k+1},\bm{z}^l)$ for all $l=1,\dotsc,M^{k+1}$.

\item $q(\bm{x}^c) > \text{OPT}$. Candidate solution $\bm{x}^c$ is not ARO, so we set $\bm{x}^{k+1} = \bm{x}^k$. For the next iteration we set $M^{k+1} = M^k \cup \{\bm{z}^q\}$, to render solution $\bm{x}^c$ infeasible in the next iteration. In preparation of the next iteration, compute $v(\bm{x}^{k+1},\bm{z}^c)$.
\end{enumerate}
The above procedure is repeated until the objective value of \eqref{eq: P2} is nonnegative. \Cref{alg: CCG-2} describes the resulting C\&CG algorithm and \Cref{lemma: PARO-CCG} proves that it yields a PARO Stage-1 solution.

\begin{algorithm}[htb!]
\Begin{
Run \Cref{alg: CCG}\;
Initialize $\bm{x}^k = \bm{x}^{\text{ARO}}$ and $M^k = M^{\text{ARO}}$\;
Set $v^l(\bm{x}^k,\bm{z}^l) = \text{OPT}$ for all $l=1,\dotsc,|M^k|$\;
Set boolean \emph{PARO} to \emph{False}\;
\While{not \emph{PARO}}{
	Solve problem $P_2(\bm{x}^k,M^k)$ and denote the objective value $p_2^k$\;
	\eIf {$p_2^k \geq 0$}{
	Set  \emph{PARO} to \emph{True}\; 
	}
	{Denote the stage-1 solution of $P_2(\bm{x}^k,M^k)$ by $\bm{x}^c$ and the new scenario by $\bm{z}^c$\;
	Solve subproblem $Q(\bm{x}^c)$\;
	\eIf{$q(\bm{x}^c) \leq \text{OPT}$}
	{
		Set $M^{k+1} \leftarrow M^k \cup \{\bm{z}^c\}$\;
		Set $\bm{x}^{k+1} \leftarrow \bm{x}^c$\;
		Compute	$v(\bm{x}^{k+1},\bm{z}^l)$ for all $l=1,\dotsc,|M^{k+1}|$\;
	}
	{
		Denote the optimizer of $Q(\bm{x}^c)$ by $\bm{z}^q$\;
		Set $M^{k+1} =  M^k \cup \{\bm{z}^q \}$\;
		Set $\bm{x}^{k+1} \leftarrow \bm{x}^k$\;
		Compute	$v(\bm{x}^{k+1},\bm{z}^c)$\;
	}
	Set $k \leftarrow k+1$\;
	}
}
Return $\bm{x}^{\text{PARO}} := \bm{x}^k$\;
}
\caption{\small C\&CG method for finding a PARO solution \label{alg: CCG-2}}
\end{algorithm}

\begin{lemma} \label{lemma: PARO-CCG}
A solution $\bm{x}^{\text{PARO}}$ obtained from \Cref{alg: CCG-2} is PARO to \eqref{eq: P}.
\end{lemma}
\begin{proof}
See \Cref{app: proof-PARO-CCG}.
\end{proof}

In the special case that the vertices of $U$ can be explicitly enumerated, the vertex set can be used to initialize $M^k$. In this case, any candidate solution $\bm{x}^c$ is feasible on all vertices on $U$, so it is guaranteed to be ARO. Consequently, it is not needed to solve subproblem $Q(\bm{x}^c)$ in each iteration, and \Cref{alg: CCG-2} can be simplified. Note that this special case is the same as the case discussed in \Cref{sec: PARO-convex-hull}, but with $\bm{d} \neq \bm{0}$.

\subsection{Hints for solving the bilinear optimization problems}
\Cref{alg: CCG-2} requires solving both problems $P_2$ and $Q$ in each iteration; $Q$ is also solved in each iteration of \Cref{alg: CCG}. Unfortunately, both are intractable in general. The reason is that for both problems the feasible region of the inner optimization problem depend on the variables of the outer optimization problem. For $Q$, dualizing the inner minimization problem results in the following bilinear problem:
\begin{subequations} \label{eq: Q-dual}
\begin{align}
\max_{\bm{z} \in U,~\bm{\lambda}\leq \bm{0}} &~ \bm{c}(\bm{z})^{\top}\bm{x} + \bm{\lambda}^{\top}\big(\bm{r}(\bm{z}) - \bm{A}(\bm{z})\bm{x}\big) \\
\text{s.t.}~~~&~ \bm{B}^{\top}\bm{\lambda} = \bm{d}.
\end{align}
\end{subequations}

Similarly, for $P_2$ dualizing the inner maximization problem also results in a bilinear optimization problem:
\begin{subequations} \label{eq: P2-dual}
\begin{align}
\min_{\substack{\bm{z}^c,\bm{x}^c,\bm{y}^c, \bm{\lambda} \\ \{\bm{y}^l\}_{l=1}^{|M^k|}}} ~&~  \big(\bm{c}(\bm{z}^c)^{\top}\bm{x}^c + \bm{d}^{\top}\bm{y}^c\big) - \Big(\bm{c}(\bm{z}^c)^{\top}\bm{x}^k + \bm{\lambda}^{\top} (\bm{r}(\bm{z}^c) - \bm{A}(\bm{z}^c)\bm{x}^k)\Big), \\
\text{s.t.}~&~ \bm{A}(\bm{z}^c) \bm{x}^c + \bm{B} \bm{y}^c \leq \bm{r}(\bm{z}^c), \\
		   ~&~ \bm{A}(\bm{z}^c) \bm{x}^c + \bm{B} \bm{y}^l \leq \bm{r}(\bm{z}^l),~~\forall l=1,\dotsc,|M^k|, \\
	   		   ~&~ \bm{c}(\bm{z}^c)^{\top} \bm{x}^c + \bm{d}^{\top} \bm{y}^l \leq v(\bm{x}^k,\bm{z}^l),~~\forall l=1,\dotsc,|M^k|, \\
		   ~&~ \bm{z}^c \in U, \\
		   ~&~ \bm{B}^{\top}\bm{\lambda} = \bm{d},~~\bm{\lambda} \leq \bm{0}. 
\end{align}
\end{subequations}
For both \eqref{eq: Q-dual} and \eqref{eq: P2-dual}, we propose to use a simple alternating direction heuristic, also known as mountain climbing, which guarantees a local optimum \citep{Konno76}.

We describe this approach in more detail for \eqref{eq: P2-dual}. For some initial $\bm{z}^c$ one can determine the optimal $\{\bm{x}^c,\bm{y}^c,\bm{y}^1,\dotsc,\bm{y}^{|M^k|}\}$ by solving an LP. Subsequently, we alternate between optimizing for $\bm{z}^c$ and $\{\bm{\lambda},\bm{x}^c,\bm{y}^c,\bm{y}^1,\dotsc,\bm{y}^{|M^k|}\}$ while keeping the other set of variables at their current value. For either set of variables, the problem is an LP. This is continued until two consecutive LP problems yield the same objective value.

The solution quality of the mountain climbing procedure depends on the starting value for $\bm{z}^c$. One option is to simply pick the nominal scenario, if it is defined. A better starting solution can be obtained by plugging in an LDR for $\bm{y}^k$ in $P_2$, i.e., by solving
\begin{subequations}
\begin{align}
\max_{\bm{w},\bm{W}}~\min_{\substack{\bm{z}^c,\bm{x}^c,\bm{y}^c, \\  \{\bm{y}^l\}_{l=1}^{|M^k|}}} ~&~ (\bm{c}(\bm{z}^c)^{\top}\bm{x}^c + \bm{d}^{\top}\bm{y}^c) - (\bm{c}(\bm{z}^c)^{\top}\bm{x}^k + \bm{d}^{\top}(\bm{w} + \bm{W}\bm{z}^c)) \\
\text{s.t.}~&~ \bm{A}(\bm{z}^c) \bm{x}^k + \bm{B} (\bm{w} + \bm{W}\bm{z}^c) \leq \bm{r}(\bm{z}^c),
\end{align}
\end{subequations}
additionally subject to \eqref{eq: P2-2}-\eqref{eq: P2-5}. This is a static linear robust optimization problem.

This mountain climbing procedure can also be used to find a local optimum to \eqref{eq: Q-dual}. A starting solution for $\bm{z}$ can be found by plugging in an LDR for $\bm{y}$ in \eqref{eq: Q}, and subsequently one can alternate between solving for $\bm{z}$ and $\bm{\lambda}$ in \eqref{eq: Q-dual}.

By using a heuristic approach to solving \eqref{eq: P2}, it possible that at a certain iteration $k$ in \Cref{alg: CCG-2} we obtain an estimate $\hat{p}_k \geq 0$, while the true $p_k<0$, so the algorithm terminates without finding a PARO solution. Nevertheless, also solutions obtained this way that are not proven to be PARO can improve upon the original ARO solution. Similarly, if \eqref{eq: Q-dual} is not solved to optimality, we might falsely conclude that a solution is ARO. In the numerical experiments this risk is reduced by using multiple starting points for $\bm{z}^c$.

There are many different approaches to (approximately) solving problems \eqref{eq: Q-dual} and \eqref{eq: P2-dual}, or equivalently, problems $Q$ and $P_2$; we conclude this section by providing some examples and references. In case of RHS uncertainty, the following approach gives an exact solution to \eqref{eq: P2}. The inner minimization problem is an LP for which we can write down the optimality conditions. Subsequently, the complementary slackness conditions can be linearized using big-M constraints and auxiliary binary variables. This results in an exact reformulation to a mixed binary convex reformulation (mixed binary linear if $U$ is polyhedral). This reformulation was previously used in the C\&CG method of \citet{Zeng13} for ARO problems with a polyhedral uncertainty set, and to solve bilinear optimization problems with a disjoint uncertainty set \citep{Zhen22}. One can verify that in case of RHS uncertainty, \eqref{eq: P2-dual} indeed has disjoint polyhedral feasible regions for $\lambda$ and $\{\bm{z}^c,\bm{x}^c,\bm{y}^c,\bm{y}^1,\dotsc,\bm{y}^{|M^k|}\}$.

Another possible approach is the Reformulation-Perspectification-Technique proposed by \citet{Zhen21}. The advantage of this approach is that one gets both an upper and lower bound for the optimal objective value and this method can be easily extended to a Branch \& Bound framework to find the global optimal solution.

For alternative approaches to solving bilinear optimization problems, we refer to \citet{Konno76}, \citet{Nahapetyan09} and \citet{Zhen22}.

\section{Numerical Experiments} \label{sec: num-exp}
To demonstrate the value of PARO solutions in practice, we focus on an example problem in which (adaptive) robust optimization has been successfully applied: a facility location problem. 

\subsection{Problem description}
Consider a strategic decision-making problem where a number of facilities are to be opened, in order to satisfy the demand of a number of customers. The goal is to choose the locations for opening a facility such that the cost for opening the facilities plus the transportation cost for satisfying demand is minimized. We consider this problem in a two-stage setting with uncertain demand. Thus, facility opening decisions need to be made in Stage 1, before Stage-2 demand is known. 

Suppose there are $n$ locations where a facility can be opened, and $m$ demand locations. Let $\bm{x} \in \{0,1\}^n$ be a binary Stage-1 decision variable denoting the facility opening decisions. Opening facility costs $f_i$ and yields a capacity $s_i$, $i=1,\dotsc,n$. Let $\bm{y} \in \mathcal{R}^{m, m\times n}$ be the Stage-2 decision variable denoting transport from facility $i$ to demand location $j$; let $c_{ij}$ denote the associated costs, $i=1,\dotsc,n$, $j=1,\dotsc,m$. Let $z_j$ denote the uncertain demand in location $j$. The two-stage facility location model reads
\begin{subequations}\label{eq: FL}
\begin{align} 
\min_{\bm{x},\bm{y}(\cdot)} ~&~ \max_{\bm{z} \in U}~ \sum_{i=1}^n \sum_{j =1}^m c_{ij} y_{ij}(\bm{z}) + \sum_{i=1}^n f_i x_i,  \\
\text{s.t.}~&~ \sum_{i =1}^n y_{ij}(\bm{z}) \geq z_j,~~\forall \bm{z} \in U,~\forall j=1,\dotsc,m, \\
~&~ \sum_{j=1}^m y_{ij}(\bm{z}) \leq s_i x_i,~~\forall \bm{z} \in U,~\forall i =1,\dotsc,m, \\
~&~y_{ij}(\bm{z}) \geq 0,~\forall \bm{z} \in U,~~\forall i=1,\dotsc,n,~j=1,\dotsc,m, \\
~&~\bm{x} \in \{0,1\}^n,
\end{align}
\end{subequations}
with uncertainty set
\begin{align*}
U = \{\bm{z} : \sum_{j =1}^m z_j \leq \Gamma,~ l_j \leq z_j \leq u_j,~\forall j=1,\dotsc,m\}.
\end{align*}

\subsection{Setup}
Model \eqref{eq: FL} can be written in form \eqref{eq: P} with right-hand side uncertainty. In order to find an (approximate) PARO solution $\bm{x}_{\text{PARO}}$, \Cref{alg: CCG-2} is used, including C\&CG \Cref{alg: CCG} to find an ARO solution. In order to improve the likelihood of finding an ARO solution, problem \eqref{eq: Q-dual} is solved (with the mountain climbing approach) using multiple starting points for $\bm{z}$: the LDR-based starting point and 5 randomly sampled starting points.

For comparison purposes, we also compute a PRO solution to \eqref{eq: P} using the methodology of \citet[Theorem 1]{Iancu14}, which we repeat for convenience. Specifically, we plug in LDR $\bm{y}(\bm{z}) = \bm{w}+\bm{W}\bm{z}$, and obtain solution $(\bm{x}_1,\bm{w}_1,\bm{W}_1)$. Subsequently, we optimize for the nominal scenario $\bm{\bar{z}}$ whilst ensuring that performance in other scenarios does not deteriorate, and feasibility is maintained:
\begin{subequations}\label{eq: Thm1-PRO}
\begin{align}
\min_{\substack{\bm{x},\bm{w},\bm{W}\\\bm{x_2},\bm{w_2},\bm{W_2}}} ~&~ \bm{c}(\bm{\bar{z}})^{\top} \bm{x_2} + \bm{d}^{\top}(\bm{w_2} + \bm{W_2}\bm{\bar{z}}), \\
\text{s.t.} ~&~ \bm{c}(\bm{z})^{\top} \bm{x_2} + \bm{d}^{\top}(\bm{w_2} + \bm{W_2}\bm{z}) \leq 0, ~~\forall \bm{z} \in U, \label{eq: Thm1-PRO-2} \\
~&~ \bm{A}(\bm{z})\bm{x} + \bm{B}(\bm{w} + \bm{W}\bm{z}) \leq \bm{r}(\bm{z}),~~\forall \bm{z} \in U, \label{eq: Thm1-PRO-3}\\
~&~\bm{x} = \bm{x_1}+\bm{x_2}, \bm{w} = \bm{w_1}+\bm{w_2}, \bm{W} = \bm{W_1}+\bm{W_2}.  \label{eq: Thm1-PRO-4}
\end{align}
\end{subequations}
Constraint \eqref{eq: Thm1-PRO-4} states that the new solution equals the original solution (variables with subscript 1) plus an adaptation (variables with subscript 2). Constraint \eqref{eq: Thm1-PRO-2} ensures that the adaptation does not deteriorate performance in any scenario, and the objective is to optimize performance in scenario $\bm{\bar{z}}$. According to \citet[Theorem 1]{Iancu14}, the optimal solution for $(\bm{x},\bm{w},\bm{W})$ is PRO to \eqref{eq: P}. Let $\bm{\bm{x}}_{\text{PRO}}$ denote the optimal solution for $\bm{x}$.

We compare the performance of the Stage-1 (here-and-now) solutions $\bm{x}_{\text{PARO}}$, $\bm{x}_{\text{ARO}}$ and $\bm{x}_{\text{PRO}}$. For solutions $\bm{x}_{\text{PARO}}$ and $\bm{x}_{\text{ARO}}$ we use the optimal recourse decision. For $\bm{x}_{\text{PRO}}$ we report the results for two decision rules: (i) the optimal recourse decision, (ii) the LDR.  We refer to the four objective values as \textit{PARO}, \textit{ARO}, \textit{PRO} and \textit{PRO(LDR)}. We compute the relative improvement (in $\%$) of \textit{PARO} over the other three objective values for three different cases:
\begin{enumerate}[leftmargin=* ,labelindent=4.4em]
\item[\textbf{Nominal}:] Relative improvement in nominal scenario $\bm{\bar{z}}$.
\item[\textbf{Average}:] Average relative improvement over $10$ uniform randomly sampled scenarios. 
\item[\textbf{Maximum}:] Relative improvement in the scenario with the maximum performance difference between $\bm{x}_{\text{ARO}}$ and $\bm{x}_{\text{PARO}}$. This scenario, which we denote $\bm{z}^{\ast}$, is found by solving \eqref{eq: P2} with fixed $\bm{x}^k = \bm{x}_{\text{ARO}}$ and $\bm{x}^c = \bm{x}_{\text{PARO}}$.
\end{enumerate}
All optimization problems are solved using Gurobi 9.0 \citep{Gurobi20} with the dual simplex algorithm selected. We note that the influence of different solvers may also be investigated, but this is beyond the scope of this paper. All computations are performed on a Intel-Core i7-8565U PC with 16GB RAM, using all 8 threads.

During our numerical studies we found examples where \Cref{alg: CCG-2} was not able to improve upon the initial Stage-1 solution $\bm{x}_{\text{ARO}}$. This could occur if the initial $\bm{x}_{\text{ARO}}$ happens to be PARO. Or, it could occur if there is a unique ARO solution - after all, not every ARO instance has multiple worst-case optimal Stage-1 solutions. The latter has been reported before in literature. \Citet{DeRuiter16} show that the multi-stage production-inventory model of \citet{BenTal04} has unique here-and-now decisions in almost all time periods, if LDRs are used. In that example, the reported multiplicity of solutions is mainly due to non-PRO decision rule coefficients. We find that multiplicity of Stage-1 solutions appears in particular when problem data is integer.

\subsection{Data} \label{sec: data}
We consider 250 instances with $m=20$ and $n=40$. Facility capacity $s_i$ is set at 15 for each $i$. Other parameters are independently drawn from a discrete uniform distribution. Construction costs $\bm{f} \in \mathbb{R}^n$ are drawn between 4 and 22. Entries of transportation cost matrix $\bm{C} \in \mathbb{R}^{n\times m}$ are drawn between $2$ and $12$.

We set lower and upper bound $l_j=8$ and $u_j=12$ for each demand location $j=1,\dotsc,m$. Maximum total demand is set at $\Gamma = 200$. The nominal demand scenario is $\bar{z}_j = 10$ for all $j$. Note that $\bm{\bar{z}} \in \text{ri}(U)$.

\Cref{app: num-exp-small} provides additional results for smaller instances, where the vertices of the uncertainty set $U$ can be enumerated, thus simplifying \Cref{alg: CCG-2} (see \Cref{sec: CCG}).

\subsection{Results} 
Computing an (approximate) ARO solution using the C\&CG method requires at most $6$ iterations, together taking 251 seconds, on average. Subsequently, \Cref{alg: CCG-2} performs at most $6$ iterations to find an (approximate) PARO solution, taking in total 78 seconds, on average.

For the worst-case scenario, \textit{ARO} does not necessarily find the worst-case scenario and may thus underestimate the true worst-case cost. On the other hand, \textit{PRO} uses an LDR and may find an objective value that is higher than the worst-case optimum. On average, the reported worst-case objective value for \textit{PRO} is at most $1.72\%$ higher than that of \textit{ARO} (median $0.75\%$). In $31\%$ of instances the stage-1 solution of $\bm{x}^{\text{PARO}}$ differs from $\bm{x}^{\text{ARO}}$. \Cref{table: FL-large-x} reports the median and maximum difference in $\ell_1$-norm for these instances, representing the number of different facilities that are opened. For example, an $\ell_1$-norm of 2 indicates that one solution opened facility $i$ and another solution opened facility $j$, or one solution opened both facilities $i$ and $j$ and the other solution opened neither. The total number of considered facility locations is $n=40$, so the differences in Stage-1 facility openings are meaningful.

\begin{table}[htb]
\centering
\begin{tabular}{c c c c}\toprule
 & $\|\bm{x}_{\text{PARO}} - \bm{x}_{\text{ARO}} \|_1$ & $\|\bm{x}_{\text{PARO}} - \bm{x}_{\text{PRO}} \|_1$ & $\|\bm{x}_{\text{ARO}} - \bm{x}_{\text{PRO}} \|_1$ \\ \midrule
 median & 4 & 3 & 4 \\
 max & 10  & 10 &  9 \\ \bottomrule
\end{tabular}
\caption{\small Total differences in Stage-1 facility openings. \label{table: FL-large-x}}
\end{table}

\begin{figure}[htb]
\hspace*{-2cm}
\begin{subfigure}{0.5\textwidth}
  \includegraphics[scale=0.65]{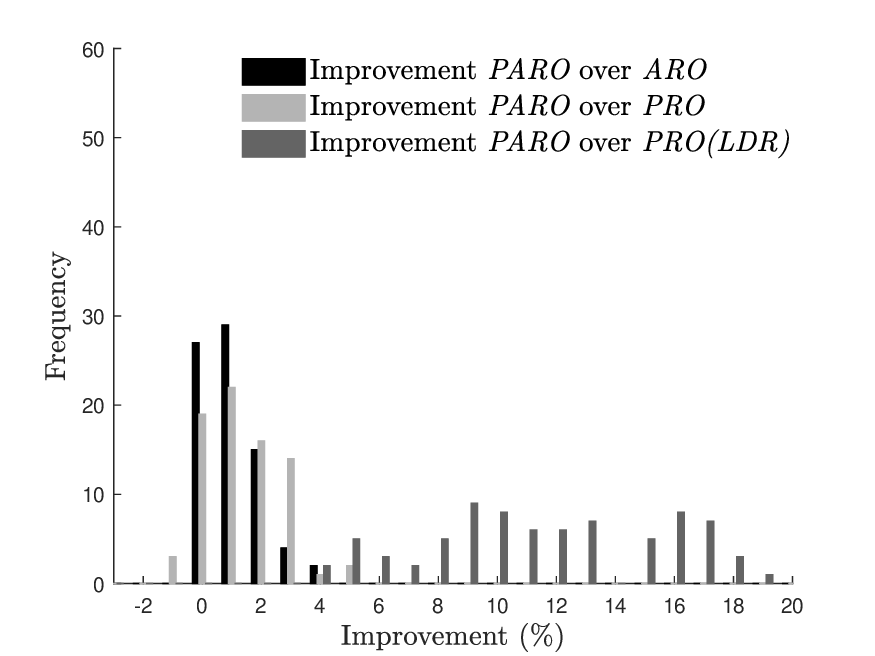}
  \caption{\small Scenario $\bm{z^{\ast}}$ \label{fig: histograms-FL-large-max}}
\end{subfigure}
\begin{subfigure}{0.5\textwidth}
  \includegraphics[scale=0.65]{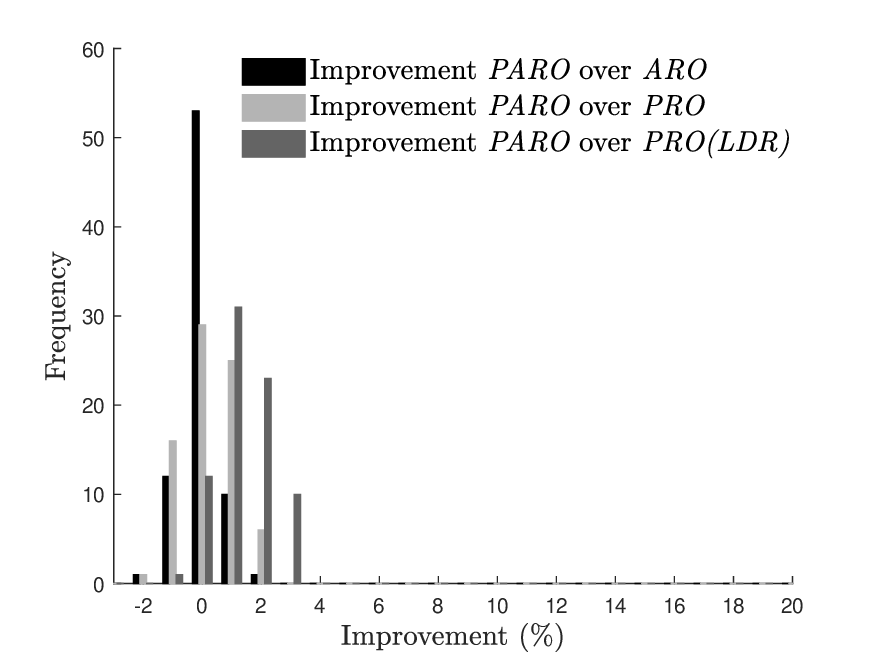}
  \caption{\small Scenario $\bm{\bar{z}}$  \label{fig: histograms-FL-large-nom}}
\end{subfigure}
\begin{subfigure}{\textwidth}
\centering
  \includegraphics[scale=0.65]{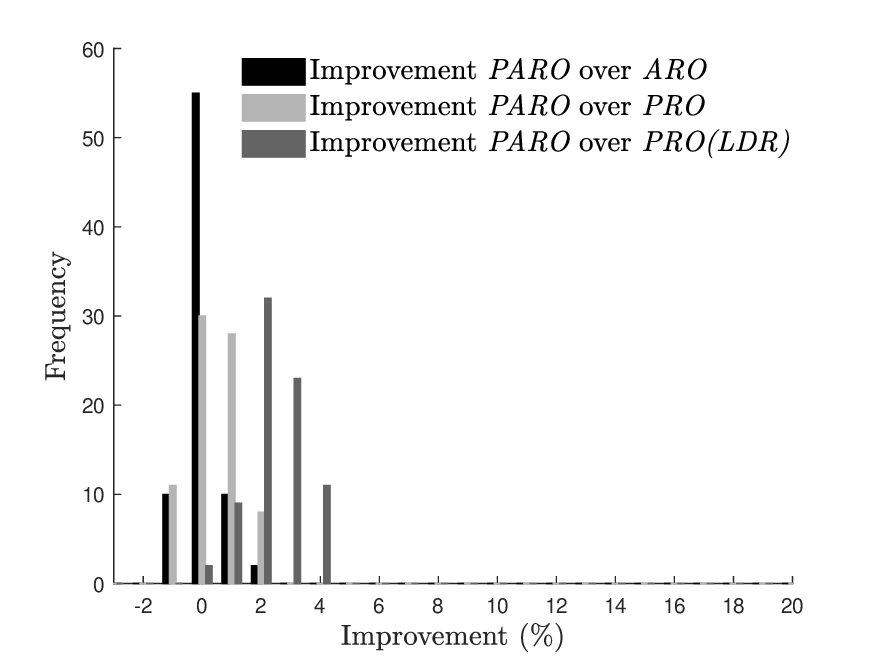}
  \caption{\small 10 Random scenarios \label{fig: histograms-FL-large-mean}}
\end{subfigure}
\caption{\small Histograms with relative improvement of PARO solution over alternative solutions. \label{fig: histograms-FL-large}}
\end{figure}

\Cref{fig: histograms-FL-large} shows the histograms of the relative objective value improvement of \textit{PARO} over \textit{ARO}, \textit{PRO} and \textit{PRO(LDR)} for the $31\%$ of instances with different $\bm{x}^{\text{PARO}}$ and $\bm{x}^{\text{ARO}}$. \Cref{fig: histograms-FL-large-max} show the improvement for maximum difference scenario $\bm{z^{\ast}}$, \Cref{fig: histograms-FL-large-nom} show the improvement for nominal scenario $\bm{\bar{z}}$ and \Cref{fig: histograms-FL-large-mean} show the improvement for 10 random scenarios in the uncertainty set. \Cref{table: percentages-FL_large} details the minimum, median and maximum relative improvement.

\begin{table}[htb!]
\centering
\begin{adjustbox}{width=\columnwidth,center}
\begin{tabular}{c c c c c}
\toprule 
& & \multicolumn{3}{c}{Relative improvement ($\%$)} \\
&  & \textit{PARO} over \textit{ARO} & \textit{PARO} over \textit{PRO} & \textit{PARO} over \textit{PRO(LDR)} \\\midrule
\multirow{3}{*}{Scenario $\bm{z^{\ast}}$}&  minimum & 	0.21 & -0.40 & 4.22 \\
& median  & 1.52 & 1.84 & 11.5 \\
& maximum & 4.51 & 5.45 & 19.1 \\[0.5em]
\multirow{3}{*}{Scenario $\bar{\bm{z}}$}&  minimum & 	-1.14& -1.03 & -0.20 \\
& median  & 0.54 & 0.73 & 1.83 \\
& maximum & 2.15 & 2.36 & 3.90 \\[0.5em] 
\multirow{3}{*}{\shortstack{10 Random \\ scenarios}}&  minimum & 	-0.78 & -0.90 & 0.67 \\
& median  & 0.57 & 0.82 & 2.92 \\
& maximum & 2.03 & 2.83 & 4.53 \\ \bottomrule
\end{tabular}
\end{adjustbox}
\caption{\small Relative improvement of PARO solution over alternative solutions. \label{table: percentages-FL_large}}
\end{table}

The magnitude of differences is larger for scenario $\bm{z}^{\ast}$ than for the other two measures. In all cases the maximum relative improvement is substantial, but the median relative improvement is only minor in most cases. However, if the Stage-1 solution represents a decision that is to be implemented in practice, even the possibility to get an improvement of a few percentage points warrants the extra effort to obtain an (approximate) PARO solution. We note that for ARO we use the first found ARO solution $\bm{x}^{\text{ARO}}$; it is possible that there exists yet another ARO solution, for which the improvement percentages of PARO over ARO are larger than those reported in \Cref{table: percentages-FL_large} and \Cref{fig: histograms-FL-large}.

The relative improvement of PARO over PRO smaller than that of PARO over PRO(LDR). Thus, reported differences are partially due to the different Stage-1 decision, and partially due to the Stage-2 decision rule. In some scenarios the relative improvement is negative (i.e., PARO has a worse objective value), although \Cref{fig: histograms-FL-large} shows that for the large majority of instances the relative improvement is positive. For many instances the relative difference between PARO and ARO is zero, i.e., a different Stage-1 solution $\bm{x}$ does not always translate to a different performance on the three reported measures. Nevertheless, also compared to ARO, the maximum relative improvement of PARO can be substantial. Lastly, the results indicate a larger spread in objective value for PRO and PRO(LDR) than for ARO.

\section{Conclusion}\label{sec: concluding-remarks}
In this paper, we dealt with Pareto efficiency in two-stage adaptive robust optimization problems. Similar to static robust optimization, the large majority of solution techniques focus only on worst-case optimality, and may yield solutions that are not Pareto efficient. To alleviate this, the concept of Pareto Adaptive Robustly Optimal (PARO) solutions has been introduced in the literature, and is formalized in a general framework in this paper.\footnote{For ARO problems, every non-PARO solution is dominated by a PARO solution, even if the former is Pareto Robustly Optimal (PRO), as defined by \citet{Iancu14}.}

Using FME as the predominant technique, we have analyzed the relation between PRO and PARO and investigated optimality of various decision rule structures in both worst-case and non-worst-case scenarios. We have shown the existence of PARO here-and-now decisions and shown that there exists a PWL decision rule that is PARO.

Moreover, we have provided several practical approaches to generate or approximate PARO solutions. Numerical experiments on a facility location example demonstrate that PARO solutions can significantly improve performance in non-worst-case scenarios over ARO and PRO solutions. 

A potential direction for future research would be to further investigate constructive approaches to find or approximate PARO solutions. In particular, it would be valuable to have tractable algorithms for larger instances and/or more general classes of problems than the ones that can be tackled using the currently presented approaches.

\bibliographystyle{apalike}
\small
\bibliography{References} 

\appendix
\renewcommand{\theequation}{\thesection.\arabic{equation}}

\setcounter{equation}{0}
\sectionfont{\large}
\subsectionfont{\normalsize}

\section{Optimality of Decision Rule Structures via an FME Lens} \label{app: decision-rules}
\small
As in \citet{Zhen18}, we use FME as a proof technique for ARO, and we analyze various decision rule structures. Moreover, the results in this section show that FME not only provides more general results, but also leads to more concise (and perhaps more intuitive) proofs to known results on optimal decision structures. \citet{Zhen18} also use FME as a lens to derive certain optimality properties of decision rule structures. However, the results obtained in this section are either new or more general. 

The FME lens enables us to prove that an ARF decision rule with a particular structure exists for \emph{every} ARF $\bm{x}$, instead of solely proving it is optimal for an ARO $\bm{x}$. These results are crucial for one of our main results in \Cref{sec: constructive}. 

\subsection{Eliminating adaptive variables using Fourier-Motzkin Elimination}
FME \citep{Fourier27,Motzkin36} is an algorithm for solving systems of linear inequalities. We refer to \citet{Bertsimas97} for an introduction to FME in linear optimization. Its usefulness in ARO is due to the fact that it can be used to eliminate adaptive variables, as proposed by \citet{Zhen18}. FME leads to an exponential increase in number of constraints. \citet{Zhen18} introduce a redundant constraint identification scheme, which helps to reduce the number of redundant constraints, although the number of constraints remains exponential. \citet{Zhen18} also propose to use FME to eliminate only part of the variables and using LDRs for remaining adaptive variables. Next to this, they use FME to prove (worst-case) optimality of PWL decision rules. Furthermore, they consider optimal decision rules for the adaptive variable in the dual problem: they prove (worst-case) optimality of LDRs in case of simplex uncertainty and (two-)piecewise linear decision rules in case of box uncertainty. \Citet{Zhen18b} use a combination of FME and ARO techniques to compute the maximum volume inscribed ellipsoid of a polytopic projection. The following example illustrates the use of FME to eliminate an adaptive variable.

\begin{example} \label{ex: toy-problem-RT-FME}
\small 
We use FME to eliminate adaptive variable $y$ from \eqref{eq: RT-model} in \Cref{ex: toy-problem-RT}. We move the uncertain objective to the constraints using an epigraph variable $t\in \mathbb{R}$, and rewrite the constraints to obtain:
\begin{subequations} \label{eq: RT-model-FME}
\begin{align}
\min_{x,t,y(d_1,d_2)} ~&~ t, \label{eq: RT-model-FME-1} \\
\text{s.t.}~&~ 20 \leq x \leq 40, \label{eq: RT-model-FME-2}\\
	~&~ \hspace*{1.15cm} y(d_1,d_2) \leq t/\delta - x, \hspace*{0.71cm} \forall  (d_1,d_2) \in U, \label{eq: RT-model-FME-3}\\
			~&~ d_1 - x  \leq y(d_1,d_2), \hspace*{2cm} \forall  (d_1,d_2) \in U, \\
		   ~&~ d_2 - x  \leq y(d_1,d_2), \hspace*{2cm} \forall  (d_1,d_2) \in U, \\
		   ~&~ \hspace*{0.51cm} 20 \leq y(d_1,d_2) \leq 40, \hspace*{1.34cm} \forall  (d_1,d_2) \in U. \label{eq: RT-model-FME-6}   
\end{align}
\end{subequations}
For fixed $(d_1,d_2)$, Constraints~\eqref{eq: RT-model-FME-3}-\eqref{eq: RT-model-FME-6} specify lower and/or upper bounds on $y$. By combining each pair of lower and upper bounds on $y$ into a new constraint, we find the following problem in terms of $(x,t)$:
\begin{subequations} \label{eq: RT-model-xt}
\begin{align}
\min_{x,t} ~&~ t, \label{eq: RT-model-xt-1} \\
\text{s.t.}~&~ 20 \leq x \leq 40, \label{eq: RT-model-xt-2}\\
	~&~ d_1 \leq t/\delta, ~~\forall  (d_1,d_2) \in U, \label{eq: RT-model-xt-3}\\
		~&~ d_2 \leq t/\delta, ~~\forall  (d_1,d_2) \in U, \label{eq: RT-model-xt-4}\\
		~&~ 20 \leq t/\delta - x, ~~\forall  (d_1,d_2) \in U, \label{eq: RT-model-xt-5}\\
	~&~ d_1 - x \leq 40, ~~\forall  (d_1,d_2) \in U, \label{eq: RT-model-xt-6}\\
		~&~ d_2 - x \leq 40, ~~\forall  (d_1,d_2) \in U, \label{eq: RT-model-xt-7}
\end{align}
\end{subequations}
where we have removed the trivial new constraint $20 \leq 40$. Any solution $(x,t)$ sets the following bounds on $y$: 
\begin{align*}
\max\{d_1-x,d_2-x,20 \} \leq y(d_1,d_2) \leq \min\{ t/\delta - x, 40\},~~\forall (d_1,d_2) \in U,
\end{align*}
and any decision rule satisfying these inequalities is ARO to \eqref{eq: RT-model}. Thus, two-stage problem \eqref{eq: RT-model} has been reduced to static linear RO problem \eqref{eq: RT-model-xt}. Auxiliary variable $t$ can be eliminated, but this transforms \eqref{eq: RT-model-xt} to an RO problem with a PWL objective. 
\end{example}

We focus on applying FME as a proof technique. Through the ``lens'' of FME we first consider (worst-case) optimality of decision rule structures, and subsequenly consider Pareto optimality. In the remainder of the paper, if FME is applied, w.l.o.g. it is applied on the adaptive variables in the order $y_1(\bm{z}),\dotsc,y_{n_y}(\bm{z})$, i.e., according to their index. We first state some frequently used definitions. If FME is performed on $\mathcal{X}$ until all adaptive variables are eliminated, the feasible region can be written as
\begin{align*}
\mathcal{X}_{\text{FME}} = \{\bm{x} \in \mathbb{R}^{n_x}~|~\bm{G}(\bm{z}) \bm{x} \leq \bm{f}(\bm{z}), ~~\forall \bm{z} \in U\},
\end{align*} 
for some matrix $\bm{G}(\bm{z})$ and vector $\bm{f}(\bm{z})$ depending affinely on $\bm{z}$. \citet{Zhen18} show that $\mathcal{X} = \mathcal{X}_{\text{FME}}$. For the analysis of particular decision rule structures, it is crucial to keep track of the original constraints during the FME procedure. A frequently used technical result on this is provided in the Electronic Companion, namely \Cref{lemma: y-representation} in \Cref{app: lemma-y-representation}.

\subsection{Optimality of decision rule structures}
In this section, we consider several special cases of problem \eqref{eq: P} for which particular decision rule structures are known to be optimal. We use FME to prove generalizations of these results for linear two-stage ARO problems. In particular, using FME as a proof technique enables us to show that the particular decision rule structure is not only ARO (i.e., worst-case optimal), but is ARF for each ARF Stage-1 decision $\bm{x}$. These results are used in analyzing PARO in \Cref{sec: constructive}.

We consider the cases where uncertainty appears (i) constraintwise, (ii) in a hybrid structure (part constraintwise, part non-constraintwise), (iii) in a block structure, and we consider (iv) the case with a simplex uncertainty set and the case with only one uncertain parameter.

\subsubsection*{(i) Constraintwise uncertainty}
Constraintwise uncertainty is formally defined as follows.
\begin{definition} \label{def: constraintwise}
ARO problem \eqref{eq: P} has constraintwise uncertainty if there is a partition 
\begin{align*}
\bm{z} = (\bm{z}_{(0)},\bm{z}_{(1)},\dotsc,\bm{z}_{(m)}),
\end{align*}
such that $\bm{z}_{(0)},\dotsc,\bm{z}_{(m)}$ are disjoint, the objective depends only on $\bm{z}_{(0)}$ and constraint $i$ depends only on $\bm{z}_{(i)}$, $i=1,\dotsc,m$. Additionally, $U = \{(\bm{z}_{(0)},\dotsc,\bm{z}_{(m)})~|$ $\bm{z}_{(i)} \in U^i, i=0,\dotsc,m\}$, with $U^i \subseteq \mathbb{R}^{|\bm{z}_{(i)}|}$ for all $i=0,\dotsc,m$.
\hfill $\blacksquare$ \end{definition}
\citet{BenTal04} show that for constraintwise uncertainty\footnote{Note that constraintwise uncertainty in ARO differs from constraintwise uncertainty in static RO problems. Whereas in the latter uncertainty can always be treated `constraintwise' \citep[p11]{BenTal09}, in ARO this is not the case in general, due to the adaptive variables. Constraintwise uncertainty in ARO refer to problems that satisfy the construction of \Cref{def: constraintwise}.} the objective values of the static and adaptive problem are equal, i.e., there exists an optimal static decision rule. Using FME, a generalization of their result can be easily proved. We first provide an example.
\begin{example} \label{ex: constraintwise}
\small 
Consider the following ARO problem with constraintwise uncertainty:
\begin{align*}
\min_{x,\bm{y}(\cdot)}~&~ x, \\
\text{s.t.}~&~ x - y_2(\bm{z}) \leq - \frac{1}{2}z_1,~~\forall z_1 \in [0,1], \\
~&~ -x + y_1(\bm{z}) + y_2(\bm{z}) \leq \frac{1}{2}z_2 + \frac{1}{2}z_3 + 2,~~\forall (z_2,z_3) \in [0,1]^2, \\
~&~ 1 \leq y_1(\bm{z}),~~\forall \bm{z} \in U, \\
~&~ \frac{3}{2} \leq y_2(\bm{z}) \leq 2,~~\forall \bm{z} \in U 
\end{align*}
with $U = [0,1]^3$. Uncertain parameter $z_1$ occurs only in the first constraint and $(z_2,z_3)$ occur only in the second constraint. Using FME, we first eliminate $y_1(\bm{z})$ and subsequently eliminate $y_2(\bm{z})$:
\begin{align*}
1 \leq & y_1(\bm{z}) \leq - y_2(\bm{z}) +x + 2 + \frac{1}{2}z_2 + \frac{1}{2}z_3,~~\forall \bm{z} \in U,\\
\max\{\frac{3}{2}, x + \frac{1}{2}z_1 \} \leq & y_2(\bm{z}) \leq \min \{2, x + 1 + \frac{1}{2}z_2 + \frac{1}{2}z_3  \},~~\forall \bm{z} \in U.
\end{align*}
From the bounds on $y_2(\bm{z})$ four linear constraints can be derived, two of which depend on $x$ and are non-trivial:
\begin{align*}
x + \frac{1}{2}z_1 \leq 2,~~\forall \bm{z} \in U. \\
\frac{3}{2} \leq x + 1 + \frac{1}{2}z_2 + \frac{1}{2}z_3,~~\forall \bm{z} \in U.
\end{align*}
One can verify that the (unique) ARO solution is $x^{\ast} = \frac{1}{2}$. Additionally, note that the term $\frac{1}{2}z_2 + \frac{1}{2}z_3$ appears in both upper bounds with a positive sign. As this is the only term that depends on $(z_2,z_3)$, it can be replaced by its worst-case value $0$. Similarly, the term $-\frac{1}{2}z_1$ appears in the lower bound on $y_2(\bm{z})$ with a negative sign, and can be replaced by its worst-case value $-\frac{1}{2}$. This gives the following bounds on $y_1(\bm{z})$ and $y_2(\bm{z})$:
\begin{align*}
1 \leq & y_1(\bm{z}) \leq - y_2(\bm{z}) + \frac{5}{2},~~\forall \bm{z} \in U,\\
\max\{\frac{3}{2}, 1 \} \leq & y_2(\bm{z}) \leq \min \{2, \frac{3}{2} \},~~\forall \bm{z} \in U.
\end{align*}
For $y_2(\bm{z})$, the only feasible (and hence ARO) decision rule is $y_2(\bm{z}) = \frac{3}{2}$. This implies $y_1(\bm{z}) = 1$, and we find that for both adaptive variables the optimal decision rule is static.
\end{example}
According to \Cref{lemma: y-representation}, any term such as $\frac{1}{2}z_2 + \frac{1}{2}z_3$ in \Cref{ex: hybrid} appears in all upper bounds with a positive sign and all lower bounds with a negative sign, or vice versa. Hence, if this is the only term depending on $z_2$ and $z_3$, these uncertain parameters can be eliminated by replacing them with their worst-case value. The resulting bounds on adaptive variables are independent of uncertain parameters. 

Instead of directly providing a formal proof of the result for constraintwise uncertainty, it follows as a corollary from our analysis of hybrid uncertainty, which is considered next. 

\subsubsection*{(ii) Hybrid uncertainty}
Hybrid uncertainty is a generalization of constraintwise uncertainty, where part of the uncertain parameters appear constraintwise, and part does not appear constraintwise. This uncertainty structure has previously been considered in \citet{Marandi18}. Hybrid uncertainty is defined as follows.
\begin{definition}
ARO problem \eqref{eq: P} has hybrid uncertainty if there is a partition 
\begin{align*}
\bm{z} = (\bm{\hat{z}},\bm{z}_{(0)},\bm{z}_{(1)},\dotsc,\bm{z}_{(m)}),
\end{align*}
such that $\bm{\hat{z}},\bm{z}_{(0)},\dotsc,\bm{z}_{(m)}$ are disjoint, the objective depends only on $\bm{\hat{z}}$ and $\bm{z}_{(0)}$ and constraint $i$ depends only on $\bm{\hat{z}}$ and $\bm{z}_{(i)}$, $i=1,\dotsc,m$. Additionally, $U = \{(\bm{\hat{z}},\bm{z}_{(0)},\dotsc,\bm{z}_{(m)})~|~\bm{\hat{z}} \in \hat{U}, \bm{z}_{(i)} \in U^i, i=0,\dotsc,m\}$, with $\hat{U} \subseteq \mathbb{R}^{|\bm{\hat{z}}|}$ and $U^i \subseteq \mathbb{R}^{|\bm{z}_{(i)}|}$ for all $i=0,\dotsc,m$.
\hfill $\blacksquare$\end{definition}

In case of hybrid uncertainty, there exist ARO decision rules that do not depend on the constraintwise uncertain parameters. We illustrate this with a toy example.
\begin{example} \label{ex: hybrid}
\small 
We extend \Cref{ex: constraintwise} to a problem with hybrid uncertainty by introducing a non-constraintwise uncertain parameter $\hat{z}$:
\begin{subequations} \label{eq: ex-hybrid}
\begin{align}
\min_{x,\bm{y}(\cdot)}~&~ x, \\
\text{s.t.}~&~ x - y_2(\bm{z}) \leq -\hat{z} - \frac{1}{2}z_1,~~\forall (\hat{z},z_1) \in [0,1]^2,  \label{eq: ex-hybrid-2}\\
~&~ -x + y_1(\bm{z}) + y_2(\bm{z}) \leq \hat{z} + \frac{1}{2}z_2 + \frac{1}{2}z_3 + 2,~~\forall (\hat{z},z_2,z_3) \in [0,1]^3, \\
~&~ 1 \leq y_1(\bm{z}),~~\forall \bm{z} \in U,\\
~&~ \frac{3}{2} \leq y_2(\bm{z}) \leq 2,~~\forall \bm{z} \in U,  \label{eq: ex-hybrid-5}
\end{align}
\end{subequations}
with $U = [0,1]^4$. Uncertain parameter $\hat{z}$ occurs in both constraints, $z_1$ occurs only in the first constraint and $(z_2,z_3)$ occur only in the second constraint. Using FME, we again first eliminate $y_1(\bm{z})$ and subsequently eliminate $y_2(\bm{z})$:
\begin{align*}
1 \leq & y_1(\bm{z}) \leq \hat{z} - y_2(\bm{z}) + x + 2 + \frac{1}{2}z_2 + \frac{1}{2}z_3, ~~\forall \bm{z} \in U,\\
\max\{\frac{3}{2}, x + \hat{z} + \frac{1}{2}z_1 \} \leq & y_2(\bm{z}) \leq \min \{2, x+1+ \hat{z} + \frac{1}{2}z_2 + \frac{1}{2}z_3  \},~~\forall \bm{z} \in U.
\end{align*}
From the bounds on $y_2(\bm{z})$ again four linear constraints for $x$ can be derived. The new parameter $\hat{z}$ does not break robustness of solution $x^{\ast} = \frac{1}{2}$, so this is still the unique ARO solution. Similar to \Cref{ex: constraintwise}, we can replace both occurrences of the term $\frac{1}{2}z_2 + \frac{1}{2}z_3$ by its worst-case value $0$, and $-\frac{1}{2}z_1$ can be replaced by its worst-case value $-\frac{1}{2}$. This yields the following bounds on $y_1(\bm{z})$ and $y_2(\bm{z})$:
\begin{align*}
1 \leq & y_1(\bm{z}) \leq \hat{z} - y_2(\bm{z}) + \frac{5}{2}, ~~\forall \bm{z} \in U,\\
\max\{\frac{3}{2}, 1 + \hat{z} \} \leq & y_2(\bm{z}) \leq \min \{2, \frac{3}{2} + \hat{z} \},~~\forall \bm{z} \in U.
\end{align*}
For $y_2(\bm{z})$, the only feasible (and hence ARO) LDR is $y_2(\hat{z}) = \frac{3}{2} + \frac{1}{2}\hat{z}$. This implies $ 1 \leq y_1(\bm{z}) \leq 1 + \frac{1}{2}\hat{z}$, and any decision rule that satisfies these bounds is ARO. Note that both decision rules do not depend on the constraintwise uncertain parameters. One can also pick a PWL decision rule for $y_2(\bm{z})$, such as its lower or upper bound. Also in this case the decision rules for $y_1$ and $y_2$ do not depend on $z_1$, $z_2$ or $z_3$.
\end{example}

To formally prove our claim that there exist ARO decision rules that do not depend on the constraintwise uncertain parameters, we first need a result on feasibility.
\begin{lemma} \label{lemma: hybrid-ARF} 
Let $P_{\text{hybrid}}$ denote an ARO problem of form \eqref{eq: P} with hybrid uncertainty and let $\bm{x}^{\ast}$ be ARF to $P_{\text{hybrid}}$. Then, there exists a decision rule $\bm{y}^{\ast}(\cdot)$ that depends only on $\bm{\hat{z}}$ such that $(\bm{x}^{\ast},\bm{y}^{\ast}(\cdot))$ is ARF to $P_{\text{hybrid}}$.
\end{lemma}
\begin{proof}
See \Cref{app: proof-hybrid-ARF}. 
\end{proof}
The following result is an immediate consequence of \Cref{lemma: hybrid-ARF} for ARO decisions.
\begin{corollary}\label{cor: hybrid-ARO} 
Let $P_{\text{hybrid}}$ denote an ARO problem of form \eqref{eq: P} with hybrid uncertainty. For each $\bm{x}^{\ast}$ that is ARO to $P_{\text{hybrid}}$ there exists a decision rule $\bm{y}^{\ast}(\cdot)$ depending only on $\bm{\hat{z}}$ such that the pair $(\bm{x}^{\ast},\bm{y}^{\ast}(\cdot))$ is ARO to $P_{\text{hybrid}}$.
\end{corollary}
\begin{proof}
See \Cref{app: proof-hybrid-ARO}.
\end{proof}
In case of pure constraintwise uncertainty ($U^0 = \emptyset$) \Cref{lemma: hybrid-ARF} shows that for each ARF $\bm{x}$ there exists a static $\bm{y}$ such that $(\bm{x},\bm{y})$ is ARF. Additionally, \Cref{cor: hybrid-ARO} shows that for each ARO $\bm{x}^{\ast}$ there exists a static $\bm{y}^{\ast}$ such that $(\bm{x}^{\ast},\bm{y}^{\ast})$ is ARO. 

\citet{Marandi18} prove a similar result to \Cref{cor: hybrid-ARO} for non-linear problems. More precisely, they prove that for problems with hybrid uncertainty there exists an optimal decision rule that is a function of only the non-constraintwise uncertain parameters if the problem is convex in the decision variables, concave in uncertain parameters, has a convex compact uncertainty set and a convex compact feasible region for the adaptive variables.

\subsubsection*{(iii) Block uncertainty}
Suppose we can split the constraints into blocks, where each block has its own uncertain parameters and adaptive variables, and the uncertainty set is a Cartesian product of the block-wise uncertainty sets, then there exists an optimal decision rule for each adaptive variable that depends only on the uncertain parameters in its own block. 

The formal definition of block uncertainty is as follows. Recall that constraints are indexed $1,\dotsc,m$. Let index $0$ refer to the objective.
\begin{definition}
ARO problem \eqref{eq: P} has block uncertainty if there exist partitions $\bm{z} = (\bm{z}_{(1)},\dotsc,\bm{z}_{(V)})$, $\bm{y}(\cdot) = (\bm{y}_{(1)}(\cdot),\dotsc,\bm{y}_{(V)}(\cdot))$ and $\{0,\dotsc,m\} =$ \linebreak$\{K_{(1)},\dotsc,K_{(V)}\}$ such that
\begin{itemize}
\item $U = \{(\bm{z}_{(1)},\dotsc,\bm{z}_{(V)})~|~\bm{z}_{(v)} \in U^v, v=1,\dotsc,V\}$, with $U^v \subseteq \mathbb{R}^{|\bm{z}_{(v)}|}$ for all blocks $v=1,\dotsc,V$.
\item A constraint or objective with index in set $K_{(v)}$ is independent of uncertain parameters $\bm{z}_{(w)}$ and adaptive variables $\bm{y}_{(w)}$ if block $w\neq v$. \hfill $\blacksquare$
\end{itemize}
\end{definition}
We first provide an example to develop some intuition for block uncertainty.
\begin{example} \label{ex: block}
\small 
Consider again \Cref{ex: hybrid}. Add the following constraints to \eqref{eq: ex-hybrid}:
\begin{subequations} \label{eq: ex-block}
\begin{align}
y_3(\bm{z}) + x \leq -\frac{1}{2}z_4 + \frac{3}{2},~~\forall z_4 \in [0,1], \\
y_3(\bm{z}) + 2x \geq \frac{1}{2}z_5 + 1,~~\forall z_5 \in [0,1],
\end{align}
\end{subequations}
and let $U = [0,1]^6$ denote the new uncertainty set. Then the first block consists of constraints \eqref{eq: ex-hybrid-2}-\eqref{eq: ex-hybrid-5}, adaptive variables $y_1(\bm{z}), y_2(\bm{z})$ and uncertain parameters $z_0,\dotsc,z_3$. The second block consists of constraints \eqref{eq: ex-block}, adaptive variable $y_3(\bm{z})$ and uncertain parameters $z_4$ and $z_5$. One can verify that the unique ARO solution remains $x^{\ast} = \frac{1}{2}$. The following bounds on $y_3(\bm{z})$ are obtained:
\begin{align*}
\frac{1}{2}z_5 \leq y_3(\bm{z}) \leq 1 - \frac{1}{2}z_4,~~\forall \bm{z} \in U.
\end{align*}
One feasible (and hence ARO) decision rule is $y_3(z_4,z_5) = \frac{1}{2}(1+z_5-z_4)$. The decision rules for $y_1$ and $y_2$ remain unchanged. It follows that for each adaptive variable the optimal decision rule is a function of only the uncertain parameters in its own block.
\end{example}

In order to prove the claim that there exists an optimal decision rule for each adaptive variable that depends only on the uncertain parameters in its own block, we again first consider feasibility.
\begin{lemma} \label{lemma: block-ARF} 
Let $P_{\text{block}}$ denote an ARO problem of form \eqref{eq: P} with block uncertainty and let $\bm{x}$ be ARF to $P_{\text{block}}$. Then there exists a decision rule $\bm{y}(\cdot)$ with $\bm{y}_{(v)}(\cdot)$ depending only on $\bm{z}_{(v)}$, for all $v=1,\dotsc,V$, such that $(\bm{x},\bm{y}(\cdot))$ is ARF to $P_{\text{block}}$.
\end{lemma}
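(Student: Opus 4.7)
The plan is to invoke Fourier-Motzkin Elimination (FME) in an order that respects the block structure, and then reconstruct block-local decision rules using \Cref{lemma: y-representation}. Specifically, I would eliminate the adaptive variables block by block: first all variables in $\bm{y}_{(V)}$, then in $\bm{y}_{(V-1)}$, down to $\bm{y}_{(1)}$. The ordering within a block is arbitrary, say by index.

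The first key observation is that, by the block structure, the only constraints containing a variable $y \in \bm{y}_{(v)}$ are those indexed by $K_{(v)}$, and each such constraint involves (besides $\bm{x}$) only $\bm{z}_{(v)}$ and $\bm{y}_{(v)}$ among the uncertain parameters and adaptive variables. FME eliminates a variable by pairing each upper bound with each lower bound; the resulting constraint inherits its $\bm{z}$- and $\bm{y}$-dependence only from the two parents. Hence eliminating any $y \in \bm{y}_{(v)}$ yields new constraints whose data depends at most on $\bm{x}$, $\bm{z}_{(v)}$ and the remaining (not yet eliminated) variables of $\bm{y}_{(v)}$. By a straightforward induction on the elimination step, after all adaptive variables are eliminated we obtain
\begin{align*}
\mathcal{X}_{\text{FME}} = \bigcap_{v=1}^V \left\{ \bm{x}\in\mathbb{R}^{n_x}~\middle|~\bm{G}_v(\bm{z}_{(v)})\bm{x} \leq \bm{f}_v(\bm{z}_{(v)}),~~\forall \bm{z}_{(v)} \in U^v \right\},
\end{align*}
where each residual constraint on $\bm{x}$ depends on at most one block's uncertain parameters.

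Since $\bm{x}$ is ARF, $\bm{x}\in\mathcal{X}=\mathcal{X}_{\text{FME}}$ by the result of \citet{Zhen18}, so the block-$v$ constraints above hold for every $\bm{z}_{(v)}\in U^v$, independently of the other blocks (using the Cartesian product form of $U$). Now reconstruct the decision rule block by block and, within each block, in reverse elimination order. By \Cref{lemma: y-representation}, the FME process records for each variable $y_{(v),j}$ an explicit family of upper- and lower-bound expressions that are affine in $\bm{x}$, $\bm{z}_{(v)}$, and in the $y_{(v),i}$ with $i>j$ in the same block. Defining $y_{(v),j}(\bm{z}_{(v)})$ as, e.g., the pointwise maximum of the lower bounds (after the later $y_{(v),i}$ have already been fixed as functions of $\bm{z}_{(v)}$ alone) produces a piecewise linear, hence measurable and locally bounded, function of $\bm{z}_{(v)}$. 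Carrying out this recursion for each block yields $\bm{y}_{(v)}(\cdot)$ depending only on $\bm{z}_{(v)}$, and by construction $(\bm{x},\bm{y}(\cdot))$ satisfies every original constraint for all $\bm{z}\in U$.

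The main obstacle I anticipate is the bookkeeping in showing that block separation is preserved through all FME steps. The clean way to handle this is via \Cref{lemma: y-representation}, which tracks which original rows each derived row is a nonnegative combination of; combined with disjointness of $\{K_{(v)}\}_v$ and the product structure of $U$, this rules out any cross-block coupling. The remaining task, choosing a measurable representative of the feasible bounds for $\bm{y}_{(v)}$, is standard once the bounds are seen to be piecewise linear in $\bm{z}_{(v)}$.
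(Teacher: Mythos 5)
Your proposal is correct and follows essentially the same route as the paper: eliminate the adaptive variables via FME in a block-respecting order, use \Cref{lemma: y-representation} together with the disjointness of the constraint index sets $K_{(v)}$ to verify that every derived bound on a variable in $\bm{y}_{(v)}$ involves only $\bm{x}$, $\bm{z}_{(v)}$ and not-yet-eliminated variables of the same block, and then reconstruct a piecewise linear feasible rule per block in reverse elimination order, invoking $\mathcal{X}=\mathcal{X}_{\text{FME}}$ and the Cartesian product structure of $U$. No gaps worth flagging.
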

\begin{proof}
See \Cref{app: proof-block-ARF}.
\end{proof}
\begin{corollary}\label{cor: block-ARO} 
Let $P_{\text{block}}$ denote an ARO problem of form \eqref{eq: P} with block uncertainty. For each $\bm{x}$ that is ARO to $P_{\text{block}}$ there exists a decision rule $\bm{y}(\cdot)$ with $\bm{y}_{(v)}(\cdot)$ depending only on $\bm{z}_{(v)}$, for all $v=1,\dotsc,V$, such that the pair $(\bm{x},\bm{y}(\cdot))$ is ARO to $P_{\text{block}}$. \end{corollary}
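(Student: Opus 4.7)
The plan is to reduce the corollary to \Cref{lemma: block-ARF}, exploiting the fact that in a block-uncertainty problem the objective lives inside a single block, so worst-case optimality can be transferred from any ARO rule to a block-structured rule via a simple slicing operation. First, since $\bm{x}$ is ARO, there exists a (not necessarily block-structured) $\tilde{\bm{y}}(\cdot)\in\mathcal{R}^{L,n_y}$ with $(\bm{x},\tilde{\bm{y}}(\cdot))$ ARO to $P_{\text{block}}$. Let $v^{\ast}$ denote the unique block index with $0\in K_{(v^{\ast})}$. By the definition of block uncertainty, the objective $\bm{c}(\bm{z})^{\top}\bm{x}+\bm{d}^{\top}\bm{y}(\bm{z})$ is independent of $\bm{z}_{(w)}$ and $\bm{y}_{(w)}$ for every $w\neq v^{\ast}$; equivalently, the coefficients $\bm{c}^k$ of each coordinate $z_k$ outside block $v^{\ast}$ vanish, and the subvector of $\bm{d}$ multiplying $\bm{y}_{(w)}$ is zero for $w\neq v^{\ast}$.

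Next, I would assemble a block-structured rule in two pieces. Invoke \Cref{lemma: block-ARF} on $\bm{x}$ (which is ARF because it is ARO) to obtain some block-structured ARF rule $\bm{y}'(\cdot)$, and keep its components $\bm{y}'_{(v)}(\bm{z}_{(v)})$ for all $v\neq v^{\ast}$. For the objective block, fix any reference point $\bar{\bm{z}}_{(-v^{\ast})}\in\prod_{v\neq v^{\ast}}U^v$ and set
\begin{align*}
\bm{y}_{(v^{\ast})}(\bm{z}_{(v^{\ast})}) := \tilde{\bm{y}}_{(v^{\ast})}\bigl(\bm{z}_{(v^{\ast})},\bar{\bm{z}}_{(-v^{\ast})}\bigr),
\end{align*}
which depends only on $\bm{z}_{(v^{\ast})}$ by construction. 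Define $\bm{y}(\cdot)$ by concatenating these blockwise pieces. Feasibility then follows from the block decomposition: for $v\neq v^{\ast}$, the constraints indexed by $K_{(v)}$ involve only $\bm{x}$, $\bm{z}_{(v)}$, and $\bm{y}_{(v)}$ and are satisfied by $\bm{y}'(\cdot)$; for $v=v^{\ast}$, the same constraints hold at every $\bm{z}_{(v^{\ast})}\in U^{v^{\ast}}$ because $(\bm{x},\tilde{\bm{y}})$ is ARF at the product scenario $(\bm{z}_{(v^{\ast})},\bar{\bm{z}}_{(-v^{\ast})})\in U$ and their values do not depend on $\bm{z}_{(-v^{\ast})}$.

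For worst-case optimality, using that the objective depends only on block $v^{\ast}$,
\begin{align*}
\max_{\bm{z}\in U}\bigl[\bm{c}(\bm{z})^{\top}\bm{x}+\bm{d}^{\top}\bm{y}(\bm{z})\bigr]
&= \max_{\bm{z}_{(v^{\ast})}\in U^{v^{\ast}}}\Bigl[\bm{c}(\bm{z})^{\top}\bm{x}+\bm{d}^{\top}\tilde{\bm{y}}\bigl(\bm{z}_{(v^{\ast})},\bar{\bm{z}}_{(-v^{\ast})}\bigr)\Bigr] \\
&\leq \max_{\bm{z}\in U}\bigl[\bm{c}(\bm{z})^{\top}\bm{x}+\bm{d}^{\top}\tilde{\bm{y}}(\bm{z})\bigr]=\text{OPT},
\end{align*}
since restricting $\bm{z}_{(-v^{\ast})}$ to the point $\bar{\bm{z}}_{(-v^{\ast})}$ amounts to taking the maximum over a slice of $U$, which is bounded above by the maximum over all of $U$. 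Hence $(\bm{x},\bm{y}(\cdot))$ is ARO with the required block-wise dependence structure. The only mildly delicate part is the bookkeeping: one must be careful that the block decomposition of both the objective and the constraints is applied consistently, and that the product structure $U=\prod_v U^v$ legitimately authorizes mixing and matching block-$v$ components drawn from the distinct source rules $\tilde{\bm{y}}$ and $\bm{y}'$. No analytical ingredient beyond \Cref{lemma: block-ARF} and the defining decomposition of block uncertainty is required.
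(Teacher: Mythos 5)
Your proof is correct, but it takes a genuinely different route from the paper's. The paper reduces optimality to feasibility: it folds the objective, bounded by $\text{OPT}$, into the constraint system as an additional constraint (which, having index assignable to the objective's block, preserves the block structure) and then invokes \Cref{lemma: block-ARF} on the augmented problem, exactly as in the proof of \Cref{cor: hybrid-ARO}. You instead splice two rules together: the components of a block-structured ARF rule from \Cref{lemma: block-ARF} for every non-objective block, and a slice $\tilde{\bm{y}}_{(v^{\ast})}(\cdot,\bar{\bm{z}}_{(-v^{\ast})})$ of an arbitrary ARO rule for the objective block, using the product structure $U=\prod_v U^v$ to certify feasibility and optimality of the hybrid object. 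Both arguments are sound. The paper's epigraph device is the more uniform one---the identical two-line reduction works for hybrid, block, and simplex uncertainty, which is why all three corollaries cite the same template---whereas your slicing step is specific to block uncertainty (for hybrid uncertainty a constraint depends on both $\hat{\bm{z}}$ and its own $\bm{z}_{(i)}$, so a slice fixed at $\bar{\bm{z}}_{(i)}$ would not remain feasible for all $\bm{z}_{(i)}\in U^i$). What your approach buys in exchange is twofold: it shows the objective-block component of the new rule can be taken as a section of \emph{any} prescribed ARO rule, so more of the original policy is preserved; and, pushed one step further, the same slicing applied to \emph{every} block would reprove \Cref{lemma: block-ARF} itself without FME, making the block case essentially elementary. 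Two cosmetic points: the objective (index $0$) lies in a unique $K_{(v^{\ast})}$ because $\{K_{(1)},\dotsc,K_{(V)}\}$ partitions $\{0,\dotsc,m\}$, as you implicitly use; and measurability of the section $\bm{z}_{(v^{\ast})}\mapsto\tilde{\bm{y}}_{(v^{\ast})}(\bm{z}_{(v^{\ast})},\bar{\bm{z}}_{(-v^{\ast})})$ holds for every fixed reference point since sections of jointly measurable functions are measurable, so the spliced rule indeed lies in $\mathcal{R}^{L,n_y}$.
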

\begin{proof}
Follows from \Cref{lemma: block-ARF} analogous to the proof of \Cref{cor: hybrid-ARO}.
\end{proof}

\subsubsection*{(iv) Simplex uncertainty or one uncertain parameter}
\citet{Bertsimas12} prove optimality of LDRs for right-hand side uncertainty and a simplex uncertainty set. \citet{Zhen18} generalize this to both left- and right-hand side uncertainty, their proof uses FME on the dual problem. We use FME on the primal problem, which leads to a more intuitive proof; the following example illustrates the main idea. We note that the case with one uncertain parameter is a special case of simplex uncertainty, so the results of this section also hold for that case.
\begin{example}
\small 
Consider the problem
\begin{align*}
\min~&~ x, \\
\text{s.t.}~&~ x - y_2 \leq -z_1 - \frac{1}{2}z_2 - \frac{1}{2},~~ \forall \bm{z} \in U, \\
~&~ -x + y_1 + y_2 \leq z_1 + z_3 + 2,~~ \forall \bm{z} \in U, \\
~&~ 0 \leq y_1(\bm{z}),  ~~ \forall \bm{z} \in U,\\
~&~\frac{3}{2} \leq y_2(\bm{z}) \leq 2, ~~ \forall \bm{z} \in U,
\end{align*}
with standard simplex uncertainty set $U = \{(z_1,z_2,z_3): z_1+z_2+z_3 \leq 1, z_1,z_2,z_3\geq 0\}$. Similar to \Cref{ex: hybrid}, we first eliminate $y_1(\bm{z})$ and then $y_2(\bm{z})$. This results in the following bounds on the adaptive variables:
\begin{subequations} \label{eq: simplex-bounds}
\begin{align}
0 \leq &y_1(\bm{z}) \leq z_1 + z_3 + 2 + x - y_2(\bm{z}),~~\forall \bm{z} \in U,\\
\max\{\frac{3}{2},\frac{1}{2} +x + z_1 + \frac{1}{2}z_2 \} \leq &y_2(\bm{z}) \leq \min \{2, z_1 + z_3 + 1 + x \},~~\forall \bm{z} \in U.
\end{align}
\end{subequations}
Equivalently, these bounds have to be satisfied for each point in $\text{ext}(U)$. One can verify that $x^{\ast} = \frac{1}{2}$ is an ARO solution. Plugging this in \eqref{eq: simplex-bounds}, we get the following bounds for each extreme point:
\begin{align} \label{eq: extreme-points-y2}
\begin{aligned}
(0,0,0):\hspace*{1cm}	& 0 \leq y_1 \leq \frac{5}{2} -y_2,~ & \frac{3}{2} \leq y_2 \leq \frac{3}{2},\\
(1,0,0):\hspace*{1cm}	& 0 \leq y_1 \leq \frac{7}{2} -y_2,~ & 2 \leq y_2 \leq 2,\\
(0,1,0):\hspace*{1cm}	& 0 \leq y_1 \leq \frac{5}{2} -y_2,~ & \frac{3}{2} \leq y_2 \leq \frac{3}{2}, \\
(0,0,1):\hspace*{1cm}	& 0 \leq y_1 \leq \frac{7}{2} -y_2,~ & \frac{3}{2} \leq y_2 \leq 2.
\end{aligned}
\end{align}
Because $U$ is a simplex, the four extreme points are affinely independent. Therefore, there is a unique LDR such that the upper bound on $y_2(\cdot)$ holds with equality for each extreme point. This is also the case for the lower bound, and any convex combination of both decision rules also satisfies the bounds for $y_2$ in \eqref{eq: extreme-points-y2}. The LDR corresponding with the upper bounds is $y_2(z_1,z_3) = \frac{1}{2}(3 + z_1+z_3)$, and plugging this in the bounds on $y_1$ yields a similar system as \eqref{eq: extreme-points-y2} for $y_1$. This guarantees existence of an LDR for $y_1$; for the upper bound we find $y_1(z_1,z_3) =  \frac{1}{2}(2+z_1 + z_3)$. Note that this does not generalize to uncertainty sets described by more than $L+1$ extreme points.
\end{example}
Similar to the cases for hybrid and block uncertainty, we first prove feasibility for each ARF $\bm{x}$, and subsequently prove optimality.
\begin{lemma} \label{lemma: simplex-LDR-ARF}
Let $P_{\text{simplex}}$ denote an ARO problem of form \eqref{eq: P} with a simplex uncertainty set, i.e., $U = \text{Conv}(\bm{z}^1,\dotsc,\bm{z}^{L+1})$, with $\bm{z}^{j} \in \mathbb{R}^{L}$ such that $\bm{z}^1,\dotsc,\bm{z}^{L+1}$ are affinely independent. Let $\bm{x}$ be ARF to $P_{\text{simplex}}$. Then there exists an LDR $\bm{y}(\cdot)$ such that $(\bm{x},\bm{y})$ is ARF to $P_{\text{simplex}}$.
\end{lemma}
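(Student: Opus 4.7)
The plan is to bypass the iterative FME machinery and instead exploit the simplex structure directly via barycentric interpolation. Since $\bm{z}^1,\dotsc,\bm{z}^{L+1}$ are affinely independent, every $\bm{z} \in U$ admits a unique representation $\bm{z} = \sum_{j=1}^{L+1} \lambda_j(\bm{z})\, \bm{z}^j$ with $\lambda_j(\bm{z}) \geq 0$ and $\sum_j \lambda_j(\bm{z}) = 1$, and crucially the barycentric coordinates $\lambda_j(\cdot)$ are themselves \emph{affine} functions of $\bm{z}$. This is the structural feature that lets one promote pointwise-at-extreme-points data into a global LDR on $U$.

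First, I would use the hypothesis that $\bm{x}$ is ARF to extract, for each extreme point $\bm{z}^j$, a vector $\bm{y}^j \in \mathbb{R}^{n_y}$ with $\bm{A}(\bm{z}^j)\bm{x} + \bm{B}\bm{y}^j \leq \bm{r}(\bm{z}^j)$: any ARF decision rule for $\bm{x}$ evaluated at $\bm{z}^j$ yields such a $\bm{y}^j$. Next, I would define the candidate LDR
\[
\bm{y}(\bm{z}) := \sum_{j=1}^{L+1} \lambda_j(\bm{z})\, \bm{y}^j,
\]
which is affine in $\bm{z}$ because each $\lambda_j(\cdot)$ is. Finally, I would verify feasibility on all of $U$ by direct computation: for any $\bm{z}\in U$, using the affinity of $\bm{A}(\cdot)$ and $\bm{r}(\cdot)$ in $\bm{z}$ together with $\sum_j \lambda_j(\bm{z}) = 1$,
\[
\bm{A}(\bm{z})\bm{x} + \bm{B}\bm{y}(\bm{z}) = \sum_{j} \lambda_j(\bm{z}) \bigl(\bm{A}(\bm{z}^j)\bm{x} + \bm{B}\bm{y}^j\bigr) \leq \sum_{j} \lambda_j(\bm{z})\, \bm{r}(\bm{z}^j) = \bm{r}(\bm{z}),
\]
where the inequality uses $\lambda_j(\bm{z}) \geq 0$ together with feasibility at each extreme point.

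The main subtlety — and the step where the simplex hypothesis is genuinely needed — is the claim that the $\lambda_j(\cdot)$ are affine in $\bm{z}$; this is precisely where affine independence of $\bm{z}^1,\dotsc,\bm{z}^{L+1}$ enters, since otherwise the barycentric representation is non-unique and generally fails to be affine in $\bm{z}$. This aligns with the remark at the end of the preceding example that the construction does not extend to uncertainty sets described by more than $L+1$ extreme points. An FME-based phrasing, more in the spirit of the paper, would instead eliminate adaptive variables one at a time and observe that at each stage the induced upper (resp.\ lower) bound on the current $y_k$ is a pointwise minimum (resp.\ maximum) of affine functions of $\bm{z}$; choosing $y_k(\bm{z}^j)$ in the feasible range at each extreme point and extending affinely yields a valid LDR, because an affine function on a simplex lies below (resp.\ above) every defining affine piece on all of $U$ whenever it does so at each extreme point. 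Both routes hinge on the same underlying simplex fact.
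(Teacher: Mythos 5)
Your argument is correct, and it reaches the conclusion by a genuinely different route than the paper. The paper proves this lemma through its FME lens: it eliminates the adaptive variables $y_1(\bm{z}),\dotsc,y_{n_y}(\bm{z})$ sequentially, uses the bookkeeping of \Cref{lemma: y-representation} to see that at each stage the surviving bounds on the current variable are a pointwise $\min$/$\max$ of functions affine in $\bm{z}$, and then exploits the fact that an affine function on a simplex is determined by (and sandwiched by) its values at the $L+1$ affinely independent extreme points to interpolate an LDR for each variable in turn --- exactly the route you sketch in your closing remarks. Your primary argument instead interpolates all of $\bm{y}$ at once: take any ARF rule for $\bm{x}$, record its values $\bm{y}^j$ at the vertices, and glue them with the barycentric coordinates $\lambda_j(\cdot)$, which are affine precisely because the vertices are affinely independent. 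The one-line feasibility check is valid because, for fixed $\bm{x}$, the map $\bm{z}\mapsto \bm{A}(\bm{z})\bm{x}-\bm{r}(\bm{z})$ is affine and $\bm{B}$ is constant (fixed recourse) --- worth flagging explicitly, since the argument would break if $\bm{B}$ depended on $\bm{z}$. What your route buys is brevity and transparency: no elimination order, no tracking of original constraints, and it makes plain that the only structural ingredients are fixed recourse, affinity in $\bm{z}$, and the simplex geometry. What the paper's route buys is an explicit, variable-by-variable description of the feasible band for each $y_k(\bm{z})$ after elimination, which is the representation reused in the subsequent PARO and PWL-decision-rule results and which keeps the section's unified FME narrative. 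Both proofs ultimately rest on the same fact --- an affine function dominated (or dominating) at all $L+1$ vertices of a simplex is dominated (or dominating) on the whole simplex --- and both correctly fail for uncertainty sets with more than $L+1$ extreme points.
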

\begin{proof}
See \Cref{app: proof-simplex-LDR-ARF}.
\end{proof}
Similar to \Cref{cor: hybrid-ARO}, we have the following result for ARO decisions.
\begin{corollary} \label{cor: simplex-LDR-ARO}
Let $P_{\text{simplex}}$ denote an ARO problem of form \eqref{eq: P} with a simplex uncertainty set, i.e., $U = \text{Conv}(\bm{z}^1,\dotsc,\bm{z}^{L+1})$, with $\bm{z}^{j} \in \mathbb{R}^{L}$ such that $\bm{z}^1,\dotsc,\bm{z}^{L+1}$ are affinely independent. For each $\bm{x}$ that is ARO to $P_{\text{simplex}}$ there exists an LDR $\bm{y}(\cdot)$ such that the pair $(\bm{x},\bm{y}(\cdot))$ is ARO to $P_{\text{simplex}}$. 
\end{corollary}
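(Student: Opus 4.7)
The plan is to deduce this corollary from \Cref{lemma: simplex-LDR-ARF} by means of an epigraph-style reformulation, in complete analogy with the passage from \Cref{lemma: hybrid-ARF} to \Cref{cor: hybrid-ARO}. The key observation is that once we embed the worst-case optimality requirement into the constraint system, optimality of the pair $(\bm{x},\bm{y}(\cdot))$ becomes equivalent to feasibility in a slightly enlarged problem that still has the simplex uncertainty structure required by the lemma.

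Concretely, I would first fix an ARO decision $\bm{x}$. By \Cref{def: ARO-x}, there exists some $\bm{y}^{\star}(\cdot)\in\mathcal{R}^{L,n_y}$ such that $(\bm{x},\bm{y}^{\star}(\cdot))$ is ARF and
\begin{align*}
\bm{c}(\bm{z})^{\top}\bm{x}+\bm{d}^{\top}\bm{y}^{\star}(\bm{z}) \leq \text{OPT},\qquad \forall \bm{z}\in U.
\end{align*}
Next, I would define an augmented problem $P'_{\text{simplex}}$ by appending the additional constraint $\bm{c}(\bm{z})^{\top}\bm{x}+\bm{d}^{\top}\bm{y}(\bm{z})\leq \text{OPT}$ to the constraint system of $P_{\text{simplex}}$. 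Since $\bm{c}(\bm{z})$ is affine in $\bm{z}$, the appended constraint is of the form required by \eqref{eq: P-2} (linear in the decision variables, affine in $\bm{z}$), and the uncertainty set is unchanged. Thus $P'_{\text{simplex}}$ is still an instance of \eqref{eq: P} with a simplex uncertainty set on $L+1$ affinely independent extreme points. The existence of $\bm{y}^{\star}(\cdot)$ above shows that $\bm{x}$ is ARF to $P'_{\text{simplex}}$.

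I would then apply \Cref{lemma: simplex-LDR-ARF} to $P'_{\text{simplex}}$ to obtain an LDR $\bm{y}(\cdot)$ such that $(\bm{x},\bm{y}(\cdot))$ is ARF to $P'_{\text{simplex}}$. Unpacking the augmented constraints, this LDR satisfies both the original constraints of $P_{\text{simplex}}$ and the worst-case optimality condition $\bm{c}(\bm{z})^{\top}\bm{x}+\bm{d}^{\top}\bm{y}(\bm{z})\leq \text{OPT}$ for all $\bm{z}\in U$. Hence $(\bm{x},\bm{y}(\cdot))$ is ARO to $P_{\text{simplex}}$, completing the argument.

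There is no real obstacle here beyond verifying that the epigraph augmentation preserves the hypotheses of \Cref{lemma: simplex-LDR-ARF}, which is immediate: neither the dimension nor the structure of the uncertainty set changes, and the new constraint is linear in $\bm{y}(\bm{z})$ with coefficients that depend affinely on $\bm{z}$. All the technical work has already been carried out in the lemma, where the simplex structure of $U$ was used to interpolate an LDR through the values prescribed at the $L+1$ extreme points.
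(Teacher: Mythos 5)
Your proposal is correct and follows essentially the same route as the paper: the paper derives this corollary from \Cref{lemma: simplex-LDR-ARF} exactly by the epigraph augmentation used in the proof of \Cref{cor: hybrid-ARO}, appending the worst-case optimality condition $\bm{c}(\bm{z})^{\top}\bm{x}+\bm{d}^{\top}\bm{y}(\bm{z})\leq \text{OPT}$ as an extra constraint (which preserves fixed recourse, affine dependence on $\bm{z}$, and the simplex uncertainty set) and then invoking the feasibility lemma. Your check that the augmented problem remains an instance of \eqref{eq: P} is the only point that needs verifying, and you handle it correctly.
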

\begin{proof}
Follows from \Cref{lemma: simplex-LDR-ARF} analogous to the proof of \Cref{cor: hybrid-ARO}.
\end{proof}
Because the case with one uncertain parameter is a special case of simplex uncertainty, the results of \Cref{lemma: simplex-LDR-ARF} and \Cref{cor: simplex-LDR-ARO} also hold for that case.

The results on PARO in \Cref{sec: PARO-decision-rule-structure} make use of the fact that an ARF decision rule with a particular structure exists for \emph{every} ARF $\bm{x}$, i.e., \Cref{lemma: hybrid-ARF,lemma: block-ARF,lemma: simplex-LDR-ARF}.

\section{Numerical Experiments - Enumeration of Uncertainty Set Vertices} \label{app: num-exp-small}
\small
For small instances of the facility location problem of \Cref{sec: num-exp}, the vertices of the uncertainty set $U$ can be enumerated. Thus, we can obtain an ARO solution $\bm{x}_{\text{ARO}}$ by defining a separate recourse variable for each vertex of the uncertainty set. To find an (approximate) PARO solution, \Cref{alg: CCG-2} can be slightly simplified, see the end of \Cref{sec: CCG} for more details. We consider 1,000 instances with $m=8$ demand locations and $n=20$ possible facility locations. Maximum total demand is set at $\Gamma = 90$. All other settings are identical to \Cref{sec: data}.

Computing an ARO solution takes on average 10 seconds. Subsequently, \Cref{alg: CCG-2} performs 1 or 2 iterations (average 51 seconds) to find an (approximate) PARO solution. For the worst-case scenario, $\textit{PRO(LDR)}$ and $\textit{PRO}$ are both within $0.72\%$ of the optimum for all instances. In $28\%$ of the instances the Stage-1 solution $\bm{x}_{\text{PARO}}$ differs from $\bm{x}_{\text{ARO}}$ and/or $\bm{x}_{\text{PRO}}$. \Cref{table: FL-x} again reports the median and maximum difference in $\ell_1$-norm for these instances, representing the number of different facilities that are opened. The total number of considered facility locations is $n=20$, so the differences reported in \Cref{table: FL-x} are substantial.
\begin{table}[htb]
\centering
\begin{tabular}{c c c c}\toprule
 & $\|\bm{x}_{\text{PARO}} - \bm{x}_{\text{ARO}} \|_1$ & $\|\bm{x}_{\text{PARO}} - \bm{x}_{\text{PRO}} \|_1$ & $\|\bm{x}_{\text{ARO}} - \bm{x}_{\text{PRO}} \|_1$ \\ \midrule
 median & 0 & 1 & 1 \\
 max & 7  &  8 &  9 \\ \bottomrule
\end{tabular}
\caption{\small Total differences in Stage-1 facility openings (for the small instances). \label{table: FL-x}}
\end{table}

\begin{figure}[htb]
\hspace*{-2cm}
\begin{subfigure}{0.5\textwidth}
  \includegraphics[scale=0.65]{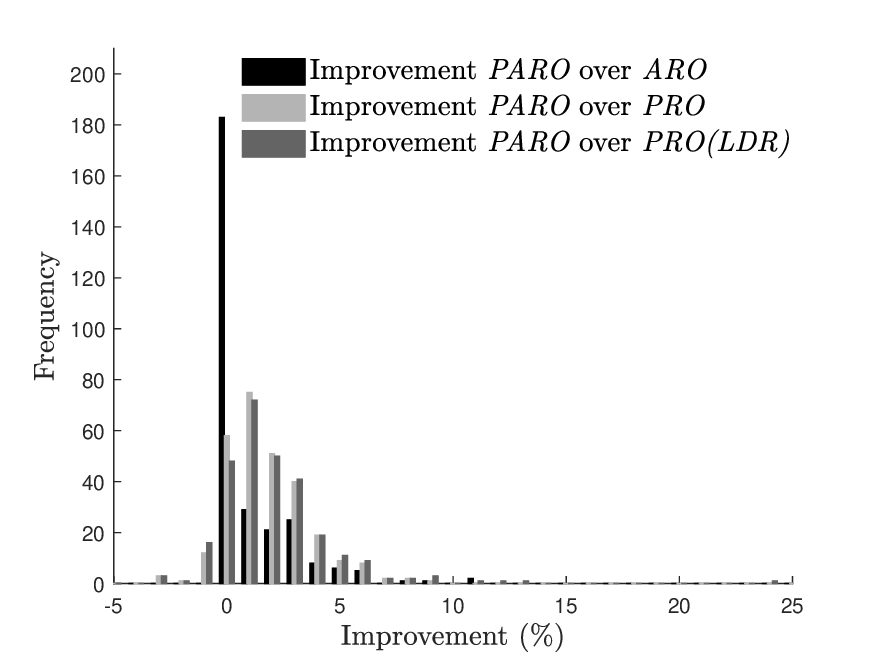}
  \caption{\small Scenario $\bm{z^{\ast}}$ \label{fig: histograms-FL-max}}
\end{subfigure}
\begin{subfigure}{0.5\textwidth}
  \includegraphics[scale=0.65]{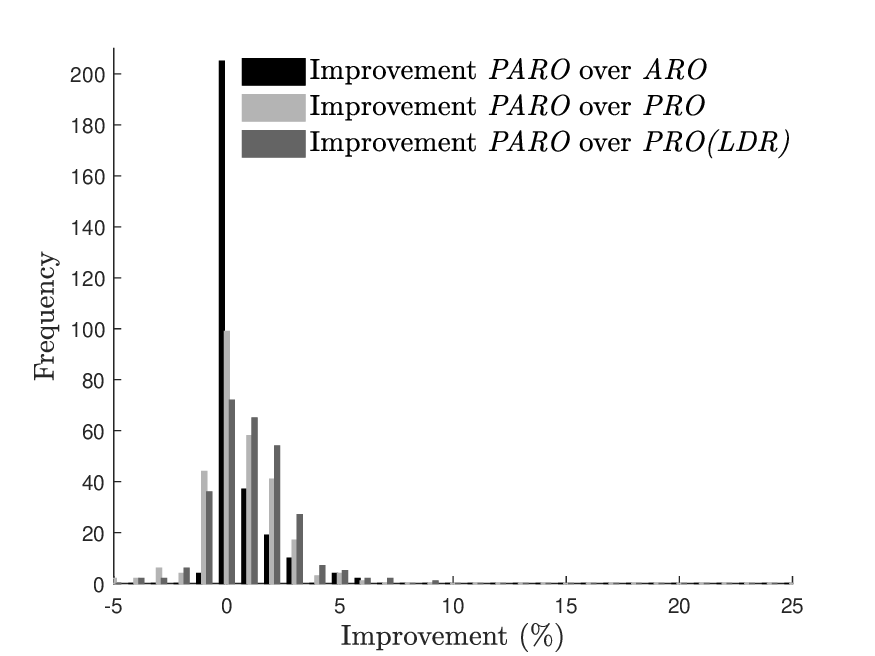}
  \caption{\small Scenario $\bm{\bar{z}}$  \label{fig: histograms-FL-nom}}
\end{subfigure}
\begin{subfigure}{\textwidth}
\centering
  \includegraphics[scale=0.65]{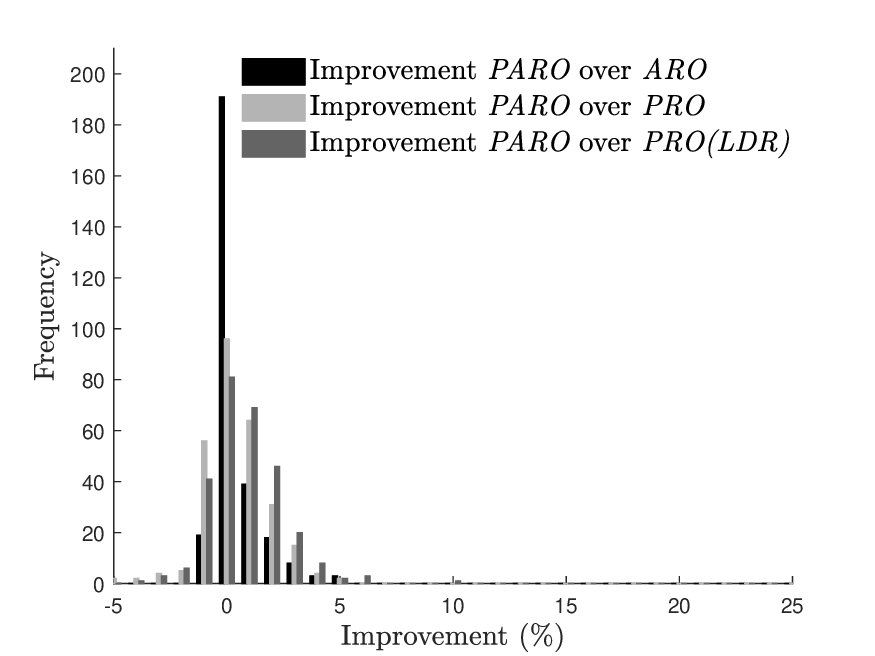}
  \caption{\small 10 Random scenarios \label{fig: histograms-FL-mean}}
\end{subfigure}
\caption{\small Histograms with relative improvement of PARO solution over alternative solutions (for the small instances). \label{fig: histograms-FL}}
\end{figure}
Analogous to \Cref{fig: histograms-FL-large}, \Cref{fig: histograms-FL} shows histograms of the relative objective value improvement of \textit{PARO} over \textit{ARO}, \textit{PRO} and \textit{PRO(LDR)} for $28\%$ of instances with different Stage-1 decisions, and \Cref{table: percentages-FL} details the minimum, median and maximum relative improvement.

\begin{table}[htb!]
\centering
\begin{adjustbox}{width=\columnwidth,center}
\begin{tabular}{c c c c c}
\toprule 
& & \multicolumn{3}{c}{Relative improvement ($\%$)} \\
&  & \textit{PARO} over \textit{ARO} & \textit{PARO} over \textit{PRO} & \textit{PARO} over \textit{PRO(LDR)} \\\midrule
\multirow{3}{*}{Scenario $\bm{z^{\ast}}$}&  minimum & 	0& -2.62 & -2.62 \\
& median  & 0 & 1.89 & 2.00 \\
& maximum & 11.2 & 9.48 & 24.2 \\[0.5em]
\multirow{3}{*}{Scenario $\bar{\bm{z}}$}&  minimum & 	-0.39& -4.89 &-3.51 \\
& median  & 0 & 0.77 & 1.35 \\
& maximum & 6.37 & 6.37 & 9.97 \\[0.5em] 
\multirow{3}{*}{\shortstack{10 Random \\ scenarios}}&  minimum & 	-0.58& -4.65 &-3.58 \\
& median  & 0 & 0.71 & 1.15 \\
& maximum & 5.48 & 5.35 & 10.9 \\ \bottomrule
\end{tabular}
\end{adjustbox}
\caption{\small Relative improvement of PARO solution over alternative solutions (for the small instances).\label{table: percentages-FL}}
\end{table}
 
The results are largely similar to the results of the larger instances of \Cref{sec: num-exp}, indicating that the findings are not limited to specific instances or instance sizes. A notable difference is that for these smaller instances the improvement of \textit{PARO} over \textit{PRO(LDR)} in scenario $\bm{z}^{\ast}$ is closer to the improvement of \textit{PARO} over \textit{PRO}. This indicates that using linear decision rules for the Stage-2 decisions is particularly suboptimal for the larger instances, whereas this suboptimality is less pronounced for the smaller instances.

\section{Technical Lemmas and Proofs} \label{app: proofs}
\small
\subsection{Bounds on eliminated adaptive variables} \label{app: lemma-y-representation}
\begin{lemma} \label{lemma: y-representation}
Let $\bm{x}$ be ARF to \eqref{eq: P}. Let $\varphi_i(\bm{x},\bm{z}) = \bm{r}_i(\bm{z}) - \bm{a}_i(\bm{z})^{\top}\bm{x}$ for each constraint $i=1,\dotsc,m$ of \eqref{eq: P-2}. Consider the system of inequalities $\bm{b}_i^{\top} \bm{y}(\bm{z}) \leq \varphi_i(\bm{x},\bm{z})$, $i=1,\dotsc,m$ and use FME to eliminate all variables. For all $k=1,\dotsc,n_y$ we can write the bounds after elimination of variable $y_k(\bm{z})$ as 
\begin{align} \label{eq: set-inequalities}
\begin{aligned}
&\max_{S_k \in C_{k}^{-}} \Big\{ \sum_{p \in S_k} \alpha(S_k,p) \varphi_p(\bm{x},\bm{z})  - \sum_{l=k+1}^{n_y} \beta(S_k,l) y_l(\bm{z}) \Big \} \leq y_k(\bm{z}) \\
&\hspace*{0cm} \leq \min_{T_k \in C_{k}^{+}} \Big\{ \sum_{q \in T} \alpha(T_k,q) \varphi_q(\bm{x},\bm{z})  - \sum_{l=k+1}^{n_y} \beta(T_k,l) y_l(\bm{z}) \Big \},~~\forall \bm{z} \in U,
\end{aligned}
\end{align}
for some coefficients $\alpha$ and $\beta$ independent of $\bm{z}$, and $C_k^{-},C_k^{+} \subseteq P(\{1,\dotsc,m\})$, with $P(\{1,\dotsc,m\})$ the power set of $\{1,\dotsc,m\}$. Additionally, if $S_k \in C_k^{-}$ for some $k$, then $\alpha(S_k,p) <0$ for all $p\in S_k$. If $T_k \in C_k^{+}$ for some $k$, then $\alpha(T_k,q) >0$ for all $q\in T_k$.
\end{lemma}
\begin{proof}
Proof by induction.\\

\noindent \emph{Base case:}\\
Elimination of variable $y_1(\bm{z})$ yields
\begin{align} \label{appeq: set-inequalities-1}
\max_{\{p:  b_{p,1} <0 \}} \Big\{ \frac{\varphi_p(\bm{x},\bm{z})}{b_{p,1}} - \frac{\sum_{l=2}^{n_y} b_{p,l} y_l(\bm{z})}{b_{p,1}} \Big\} \leq y_1(\bm{z}) \leq \min_{\{q:  b_{q,1} >0 \}} \Big\{ \frac{\varphi_q(\bm{x},\bm{z})}{b_{q,1}} - \frac{\sum_{l=2}^{n_y} b_{q,l} y_l(\bm{z})}{b_{q,1}}  \Big\}.
\end{align}
Define
\begin{align*}
C_1^{-} = \{p~|~ b_{p,1} <0 \},~~C_1^{+} = \{q~|~ b_{q,1} >0 \},
\end{align*} 
then each constraint in $C_1^{-}$ defines a lower bound on $y_1(\bm{z})$ and each constraint in $C_1^{+}$ defines an upper bound on $y_1(\bm{z})$. Each element of $C_1^{-}$ and $C_1^{+}$ is an individual `original' constraint index and not a set of constraints indices. For all $S_1 = \{p\} \in C_1^{-}$ set $\alpha(S,p) = b_{p,1}^{ -1}$, and for all $T_1 = \{q\} \in C_1^{+}$ set $\alpha(T,q) = b_{q,1}^{ -1}$. Furthermore, set $\beta(S_1,l) = b_{p,l}B_{p,1}^{-1}$ for all $S_1 = \{p\} \in C_1^{-} \cup C_1^{+}$ and all $l=2,\dotsc,n_y$. With these definitions, \eqref{appeq: set-inequalities-1} is reformulated in form \eqref{eq: set-inequalities}. Additionally, by construction, $\alpha(S_1,p) <0$ if  $p\in S_1$, $S_1\in C_1^{-}$ and  $\alpha(T_1,q) >0$ if $q\in T_1$, $T_1\in C_1^{+}$.\\

\noindent \emph{Induction step:}\\
Suppose the result holds for some $k-1$ (i.e., after elimination of variable $y_{k-1}(\bm{z})$). Variable $y_{k}(\bm{z})$ can occur in two types of constraints: (i) original constraints $i=1,\dotsc,m$ that do not depend on $y_1(\bm{z}),\dotsc,y_{k-1}(\bm{z}))$ and (ii) the new constraints acquired after elimination of $y_1(\bm{z}),\dotsc,y_{k-1}(\bm{z}))$. For case (i), define
\begin{align*}
I_k^{-} &= \{p~|~ b_{p,k} < 0,~b_{p,l} = 0,~\forall l=1,\dotsc,k-1 \},\\
I_k^{+} &= \{p~|~ b_{p,k} > 0,~b_{p,l} = 0,~\forall l=1,\dotsc,k-1 \},
\end{align*}
then each constraint in $I_k^{-}$ defines a lower bound on $y_k(\bm{z})$ and each constraint in $I_k^{+}$ provides an upper bound on $y_k(\bm{z})$. Reformulation to form \eqref{eq: set-inequalities} is similar to the case $k=1$. Thus, $\alpha(S_k,p) <0$ if $p\in S_k$, $S_k\in I_k^{-}$ and $\alpha(T_k,p) >0$ if $p\in T_k$, $T_k\in I_k^{+}$.  

For case (ii), $y_k(\bm{z})$ can occur in constraints resulting from picking linear lower and upper bounds on $y_l(\bm{z})$ from \eqref{eq: set-inequalities}. If these bounds are independent of $y_{l+1}(\bm{z}),\dotsc,y_{k-1}(\bm{z})$, for $l=1,\dotsc,k-1$, they are used directly to eliminate $y_k(\bm{z})$. For any such pair of constraints $S_l \in C_l^{-}$ and $T_l \in C_l^{+}$, FME yields the following bound on $y_k(\bm{z})$ (due to the induction assumption):
\begin{align} \label{eq: inequality-k}
\begin{aligned}
\sum_{p \in S_l} \alpha(S_l,p) \varphi_p(\bm{x},\bm{z}) - \sum_{q \in T_l} \alpha(T_l,q) \varphi_q(\bm{x},\bm{z}) -  \sum_{l=k+1}^{n_y} y_l(\bm{z}) \big(\beta(S_l,l)  - \beta(T_l,l) \big) \\ 
\hspace*{5cm} \leq y_k(\bm{z})  \big(\beta(S_l, k)  - \beta(T_l, k) \big).
\end{aligned}
\end{align}
We proceed by dividing by the coefficient of $y_k(\bm{z})$. If $\beta(S_l,k)  > \beta(T_l,k)$, inequality \eqref{eq: inequality-k} defines a lower bound for $y_{k}(\bm{z})$; if $\beta(S_l,k)  < \beta(T_l,k)$, inequality \eqref{eq: inequality-k} defines an upper bound for $y_{k}(\bm{z})$. Define
\begin{align*}
\begin{split}
J_k^{-} = \{S_k ~|~ \exists l=1,\dotsc,k-1 \text{ s.t. }&S_k = S_l \cup T_l,~ S_l \in C_l^{-},~T_l\in C_l^{+},\\
&  \beta(S_l,j) = \beta(T_l,j),~\forall j<l,~\beta(S_l,k) > \beta(T_l,k) \}, \\
\end{split}\\
\begin{split}
J_k^{+} = \{T_k ~|~ \exists l=1,\dotsc,k-1 \text{ s.t. }&T_k = S_l \cup T_l,~ S_l \in C_l^{-},~T_l\in C_l^{+}, \\
& \beta(S_l,j) = \beta(T_l,j),~\forall j<l,~\beta(S_l,k) < \beta(T_l,k) \} \\
\end{split}
\end{align*} 
so each element $S_k$ in $J_k^{-}$ (or $T_k$ in $J_k^{+}$) is a union of the indices of a lower bound constraint (set $S_l$) and an upper bound constraint (set $T_l$) on $y_l(\bm{z})$. The condition $\beta(S_l,j) = \beta(T_l,j),~\forall j<l$ on the second line ensures that these lower and upper bound constraints on $y_l(\bm{z})$ do not specify a constraint on $y_{l+1}(\bm{z}),\dotsc,y_{k-1}(\bm{z})$.

Set the coefficients for the not yet eliminated variables $y_{k+1}(\bm{z}),\dotsc,y_{n_y}(\bm{z})$ for form \eqref{eq: set-inequalities} as
\begin{align*}
\beta(S_k,j) &= \frac{\beta(S_l,j)  - \beta(T_l,j)}{\beta(S_l,k)  - \beta(T_l,k)},~\forall j=k+1,\dotsc,n_y.
\end{align*}
If $S_k \in J_k^{-}$, with $S_k = S_l \cup T_l$ for some $S_l \in C_l^{-}$ and $T_l \in C_l^{+}$, $l=1,\dotsc,k-1$, then set 
\begin{align} \label{eq: def-alpha-S-k}
\alpha(S_k, p) = 
\begin{cases} 
\displaystyle \frac{\alpha(S_l,p)}{\beta(S_l,k)  - \beta(T_l,k)} & \text{ if }p \in S_l,~ p \notin T_l, \\[1em]
\displaystyle \frac{\alpha(S_l,p) - \alpha(T_l,p)}{\beta(S_l,k)  - \beta(T_l,k)} & \text{ if } p \in S_l \cap T_l, \\[1em]
\displaystyle \frac{-\alpha(T_l,p)}{\beta(S_l,k)  - \beta(T_l,k)} & \text{ if }p \notin S_l,~ p \in T_l.
\end{cases}
\end{align}
Similarly, if $T_k \in J_k^{+}$, with $T_k = S_l \cup T_l$ for some $S_l \in C_l^{-}$ and $T_l \in C_l^{+}$ for some $l=1,\dotsc,k-1$, then set
\begin{align} \label{eq: def-alpha-T-k}
\alpha(T_k, p) &=
\begin{cases} 
\displaystyle \frac{\alpha(S_l,p)}{\beta(S_l,k)  - \beta(T_l,k)} & \text{ if }p \in S_l,~ p \notin T_l, \\[1em]
\displaystyle \frac{\alpha(S_l,p) - \alpha(T_l,p)}{\beta(S_l,k)  - \beta(T_l,k)} & \text{ if } p \in S_l, \cap T_l \\[1em]
\displaystyle \frac{-\alpha(T_l,p)}{\beta(S_l,k)  - \beta(T_l,k)} & \text{ if }p \notin S_l,~ p \in T_l.
\end{cases}
\end{align}
Due to the induction hypothesis, $\alpha(S_l,p)<0$ if $S_l \in C_l^{-}$ and $\alpha(T_l,p) >0$ if $T_l \in C_l^{+}$ for $l<k$. The denominator in both lines of \eqref{eq: def-alpha-S-k} is positive, so in that case $\alpha(S_k, p)<0$. The denominator in both lines of \eqref{eq: def-alpha-T-k} is negative, so in that case $\alpha(T_k, p)>0$. With the new coefficients chosen as above, \eqref{eq: inequality-k} provides a lower or upper bound on $y_{\hat{k}}(\bm{z})$ of the form inside the maximum or minimum operator in \eqref{eq: set-inequalities}, respectively. 

Finally, define $C_k^{-} = I_k^{-} \cup J_k^{-}$ and $C_k^{+} = I_k^{+} \cup J_k^{+}$. Each constraint in $C_k^{-}$ defines a lower bound on $y_k(\bm{z})$ and each constraint in $C_k^{+}$ defines an upper bound on $y_k(\bm{z})$. Moreover, set $C_k = C_k^{-} \cup C_k^{+}$ contains all constraints after elimination of $y_1(\bm{z}),\dotsc,y_{k-1}(\bm{z})$ that have $y_k(\bm{z})$ as lowest indexed adaptive variable. This completes the induction step.
\end{proof}

\subsection{\texorpdfstring{Proof \Cref{lemma: P-FME}}{}} \label{app: proof-P-FME}
Consider problem \eqref{eq: P}, with the objective moved to the constraints using epigraph variable $t\in \mathbb{R}$:
\begin{subequations} \label{appeq: P-epi}
\begin{align}
\min_{t,\bm{x},\bm{y}(\cdot)} ~&~ t, \\
\text{s.t.} ~&~ t \geq \bm{c}(\bm{z})^{\top} \bm{x} + \bm{d}^{\top} \bm{y}(\bm{z}),~~\forall \bm{z} \in U, \label{appeq: P-epi-2}\\
~&~ \bm{A}(\bm{z})\bm{x} + \bm{B}\bm{y}(\bm{z}) \leq \bm{r}(\bm{z}),~~\forall \bm{z} \in U. \label{appeq: P-epi-3}
\end{align}
\end{subequations}
Eliminate all adaptive variables in \eqref{appeq: P-epi-2}-\eqref{appeq: P-epi-3} via FME. Let $\varphi_0(\bm{x},t,\bm{z}) = t - \bm{c}(\bm{z})^{\top} \bm{x}$. In notation of \Cref{lemma: y-representation}, FME is performed on 
\begin{subequations} \label{appeq: FME-subset}
\begin{align} 
\bm{d}^{\top}\bm{y}(\bm{z}) &\leq \varphi_0(\bm{x},t,\bm{z}), \label{appeq: FME-subset-1}\\
\bm{b}_i^{\top} \bm{y}(\bm{z}) &\leq \varphi_i(\bm{x},t,\bm{z}),~\forall  i=1,\dotsc,m,\label{appeq: FME-subset-2}
\end{align}
\end{subequations}
where the coefficient for $t$ is zero in $\varphi_i$, $i=1,\dotsc,m$. According to \Cref{lemma: y-representation}, after elimination of variable $k$, inequalities \eqref{eq: set-inequalities} hold. Suppose for some $S_k \in C_{k}^{-}$, $T_k \in C_{k}^{+}$ the upper and lower bounds on $y_k(\bm{z})$ do not depend on $y_{k+1}(\bm{z}),\dotsc,y_{n_y}(\bm{z})$. Then the following constraint is derived for the static robust optimization problem after completing the full FME procedure:
\begin{align} \label{appeq: FME-constraint}
\sum_{p \in S_k} \alpha(S_k,p) \varphi_p(\bm{x},t,\bm{z}) \leq  \sum_{q \in T_k} \alpha(T_k,q) \varphi_q(\bm{x},t,\bm{z}),~~\forall \bm{z} \in U,
\end{align}
where $\varphi_p(\cdot)$ is a function of $t$ only if $p=0$. Constraints of the original system \eqref{appeq: P-epi-3} that are independent of adaptive variables can also be represented in form \eqref{appeq: FME-constraint}. Original constraints \eqref{appeq: P-epi-2} are part of a particular constraint in form \eqref{appeq: FME-constraint} if and only if $0 \in S_k \cup T_k$ for some $S_k \in C_{k}^{-}$, $T_k \in C_{k}^{+}$, $k=1,\dotsc,n_y$. Thus, problem \eqref{appeq: P-epi} after FME can be written as
\begin{subequations} \label{appeq: P-separate}
\begin{align}
\min_{t,\bm{x}} ~&~ t, \label{appeq: P-separate-1}\\
\text{s.t.} ~&~ \sum_{p \in S} \alpha(S,p) \varphi_p(\bm{x},t,\bm{z}) \leq  \sum_{q \in T} \alpha(T,q) \varphi_q(\bm{x},t,\bm{z}),~~\forall (S,T) \in M,~~\forall \bm{z} \in U, \label{appeq: P-separate-2}\\
~&~ \sum_{p \in S} \alpha(S,p) \varphi_p(\bm{x},t,\bm{z}) \leq  \sum_{q \in T} \alpha(T,q) \varphi_q(\bm{x},t,\bm{z}),~~\forall (S,T) \in N,~~\forall \bm{z} \in U, \label{appeq: P-separate-3}
\end{align}
\end{subequations}
with 
\begin{subequations}
\begin{align} 
\hspace*{-0.3cm} M &= \{(S,T)~|~ \exists k=1,\dotsc,n_y \text{ s.t. } S \in C_k^{-}, T\in C_k^{+}, \beta(S,l) = \beta(T,l),~\forall l>k,~ 0\in S \cup T \},  \\
\hspace*{-0.3cm} N &= \{(S,T)~|~ \exists k=1,\dotsc,n_y \text{ s.t. } S \in C_k^{-}, T\in C_k^{+},\beta(S,l) = \beta(T,l),~\forall l>k,~ 0\notin S \cup T \}. 
\end{align}
\end{subequations}
In other words, we separated the constraints depending on $t$ from the constraints not depending on $t$. From \Cref{lemma: y-representation} one can see that \eqref{appeq: P-separate-3} is the result of performing FME on the set of constraints \eqref{appeq: FME-subset-2}, which are the constraints defining set $\mathcal{X}$. Thus, \eqref{appeq: P-separate-3} describes set $\mathcal{X}_{\text{FME}}$. Furthermore, if we define $\alpha(S,0) = 0$ if $ 0\notin S$ and $\gamma(T,0) = 0$ if $0 \notin T$, constraint \eqref{appeq: P-separate-2} can be rewritten to
\begin{align} \label{appeq: t-pieces}
\begin{aligned}
\displaystyle t \geq \bm{c}(\bm{z})^{\top} \bm{x} + \sum_{p \in S, p>0} \frac{\alpha(S,p)}{\alpha(T,0) - \alpha(S,0)} \varphi_p(\bm{x},t,\bm{z}) - \sum_{q \in T, q>0} \frac{\alpha(T,q)}{\alpha(T,0) - \alpha(S,0)} \varphi_q(\bm{x},t,\bm{z}) \\
\hspace*{5cm} \forall (S,T) \in M,~~\forall \bm{z}\in U,
\end{aligned}
\end{align}
because $\alpha(T,0) > \alpha(S,0)$ according to \Cref{lemma: y-representation}. Note that the coefficient for $t$ is zero for all functions $\varphi$ on the RHS. Thus, for fixed $\bm{z} \in U$, constraint \eqref{appeq: t-pieces} defines a lower bound on epigraph variable $t$ that is convex PWL in $\bm{x}$. Subsequently, we eliminate $t$ and define
\begin{align} \label{appeq: h-ST}
h_{S,T}(\bm{x},\bm{z}) = \sum_{p \in S, p>0} \frac{\alpha(S,p)}{\alpha(T,0) - \alpha(S,0)} \varphi_p(\bm{x},\bm{z}) - \sum_{q \in T, q>0} \frac{\alpha(T,q)}{\alpha(T,0) - \alpha(S,0)} \varphi_q(\bm{x},\bm{z}).
\end{align}
This yields the following problem equivalent to \eqref{appeq: P-separate}:
\begin{align} \label{appeq: P-FME2}
\min_{\bm{x} \in \mathcal{X}_{\text{FME}}}~&~\max_{\bm{z} \in U} \bm{c}(\bm{z})^{\top} \bm{x} + \max_{(S,T)\in M} \{h_{S,T}(\bm{x},\bm{z}) \}.
\end{align}
If $(\bm{x}^{\ast},t^{\ast},\bm{y}^{\ast}(\cdot))$ is optimal to \eqref{appeq: P-epi}, $\bm{x}^{\ast}$ is optimal to \eqref{appeq: P-FME2} with equal objective value. This implies that $\bm{y}^{\ast}(\cdot)$ satisfies
\begin{align} \label{appeq: dy = max-h2}
\bm{d}^{\top}\bm{y}^{\ast}(\bm{z}) = \max_{(S,T)\in M} \{h_{S,T}(\bm{x}^{\ast},\bm{z}) \},~~\forall \bm{z}\in U.
\end{align}
Conversely, if $\bm{x}^{\ast}$ is optimal to \eqref{appeq: P-FME2}, there exists a $(t^{\ast},\bm{y}^{\ast}(\cdot))$ such that $(\bm{x}^{\ast},t^{\ast},\bm{y}^{\ast}(\cdot))$ is optimal to \eqref{appeq: P-epi} with equal objective value. This implies that any such $\bm{y}^{\ast}(\cdot)$ satisfies \eqref{appeq: dy = max-h2}. Lastly, note that $\bm{x}^{\ast}$ is optimal to \eqref{eq: P} if and only if there exists a $t^{\ast}\in \mathbb{R}$ such that $(t^{\ast},\bm{x}^{\ast})$ is optimal to \eqref{appeq: P-epi}. This completes the proof.

\subsection{\texorpdfstring{Proof \Cref{lemma: PRO-FME}}{}} \label{app: proof-PRO-FME}
By \Cref{def: PARO-x} a solution $\bm{x}^{\ast}$ is PARO to \eqref{eq: P} if and only if 
\begin{itemize}
\item There exists a $\bm{y}^{\ast} \in \mathcal{R}^{L,n_y}$ such that $(\bm{x}^{\ast}, \bm{y}^{\ast}(\cdot))$ is ARO to \eqref{eq: P} and there does not exist a pair $(\bm{\bar{x}},\bm{\bar{y}}(\cdot))$ that is ARO to \eqref{eq: P} and the following conditions hold:
\begin{align} \label{eq: PARO-inequalities}
\begin{aligned}
\bm{c}(\bm{z})^{\top}\bm{\bar{x}} + \bm{d}^{\top}\bm{\bar{y}}(\bm{z}) &\leq \bm{c}(\bm{z})^{\top}\bm{x^{\ast}} + \bm{d}^{\top}\bm{y}^{\ast}(\bm{z}),~~ \forall \bm{z} \in U, \\
\bm{c}(\bm{\bar{z}})^{\top}\bm{\bar{x}} + \bm{d}^{\top}\bm{\bar{y}}(\bm{\bar{z}}) &< \bm{c}(\bm{\bar{z}})^{\top}\bm{x^{\ast}} + \bm{d}^{\top}\bm{y}^{\ast}(\bm{\bar{z}}),~~ \text{for some } \bm{\bar{z}} \in U.
\end{aligned}
\end{align}
\end{itemize}
By \Cref{lemma: P-FME}, this holds if and only if
\begin{itemize}
\item $\bm{x}^{\ast}$ is optimal to \eqref{eq: P-FME} and there exists a $\bm{y}^{\ast} \in \mathcal{R}^{L,n_y}$ such that
\begin{align} \label{eq: dy = max-h-proof}
\bm{d}^{\top}\bm{y}^{\ast}(\bm{z}) = \max_{(S,T)\in M} \{h_{S,T}(\bm{x}^{\ast},\bm{z}) \}~~~\forall \bm{z}\in U,
\end{align} 
and there does not exist a $(\bm{\bar{x}},\bm{\bar{y}})$ such that $\bm{\bar{x}}$ is optimal to \eqref{eq: P-FME} and $(\bm{\bar{x}},\bm{\bar{y}}(\cdot))$ satisfies \eqref{eq: dy = max-h-proof} and \eqref{eq: PARO-inequalities} holds. 
\end{itemize}
Substituting \eqref{eq: dy = max-h-proof} in \eqref{eq: PARO-inequalities} yields the following set of equivalent conditions:
\begin{itemize}
\item $\bm{x}^{\ast}$ is optimal to \eqref{eq: P-FME} and there does not exist another $\bm{\bar{x}}$ optimal to \eqref{eq: P-FME} such that
\begin{align*} 
\bm{c}(\bm{z})^{\top}\bm{\bar{x}} + \max_{(S,T)\in M} \{h_{S,T}(\bm{\bar{x}},\bm{z}) \} &\leq \bm{c}(\bm{z})^{\top}\bm{x^{\ast}} + \max_{(S,T)\in M} \{h_{S,T}(\bm{x^{\ast}},\bm{z}) \},~~ \forall \bm{z} \in U, \\
\bm{c}(\bm{\bar{z}})^{\top}\bm{\bar{x}} + \max_{(S,T)\in M} \{h_{S,T}(\bm{\bar{x}},\bm{\bar{z}}) \} &< \bm{c}(\bm{\bar{z}})^{\top}\bm{x^{\ast}} +\max_{(S,T)\in M} \{h_{S,T}(\bm{x^{\ast}},\bm{\bar{z}}) \},~~ \text{for some } \bm{\bar{z}} \in U.
\end{align*}
\end{itemize}
This statement holds if and only if $\bm{x}^{\ast}$ is PRO to \eqref{eq: P-FME}, by \Cref{def: PRO}.

\subsection{\texorpdfstring{Proof \Cref{thm: x-PARO}}{}} \label{app: proof-x-PARO}
First, we prove the existence of PRO solutions to a general class of static RO problems, with bounded feasible region $\mathcal{X}$.
\begin{lemma} \label{lemma: PRO-existence}
Let $f: \mathbb{R}^n \times \mathbb{R}^L \mapsto \mathbb{R}$, with $f(\bm{x},\bm{z})$ continuous in $\bm{z}$. Consider the static RO problem
\begin{align}\label{eq: PRO-general}
\min_{\bm{x} \in \mathcal{X}} \max_{\bm{z} \in U}~&~f(\bm{x},\bm{z}).
\end{align}
Let $U \subseteq \mathbb{R}^L$ be closed, convex with a nonempty relative interior. If (i) $\mathcal{X}$ is compact and $f(\bm{x},\bm{z})$ continuous in $\bm{x}$ and/or (ii) $\mathcal{X}$ is a finite set, and additionally there exists an RO solution to \eqref{eq: PRO-general}, there also exists a PRO solution to \eqref{eq: PRO-general}.
\end{lemma}
\begin{proof}[Proof of \Cref{lemma: PRO-existence}.]
Let $(\mathbb{R}^L,\mathcal{B}(\mathbb{R}^L))$ be a measurable space, with $\mathcal{B}(\mathbb{R}^L)$ the Borel $\sigma$-algebra. For fixed $\bm{x}$, function $f(\bm{x},\bm{z})$ is continuous in $\bm{z}$, so it is measurable on closed subsets of $\mathbb{R}^L$, in particular set $U$. Define function $g: \mathbb{R}^n \mapsto \mathbb{R}$ with
\begin{align}\label{eq: integral}
g(\bm{x}) := \int_{U} f(\bm{x},\bm{z}) dP(\bm{z}),
\end{align}
where $P$ denotes a strictly positive probability measure on $\mathbb{R}^L$, such as the Gaussian measure. Because $0\leq P(U) \leq P(\mathbb{R}^L)=1$, the Lebesgue integral \eqref{eq: integral} assumes finite values for any $\bm{x}$. Hence, $f(\bm{x},\bm{z})$ is Lebesgue-integrable in its second argument on measured space $(\mathbb{R}^L,\mathcal{B}(\mathbb{R}^L),P)$ for any $\bm{x}$ and $g$ is well-defined.

We proceed by showing that an optimal solution to the following optimization problem is PRO to \eqref{eq: PRO-general}:
\begin{align} \label{eq: min-integral}
\min_{\bm{x} \in \mathcal{X}^{\text{RO}}} g(\bm{x}).
\end{align}
The remainder of the proof consists of two parts. First, we show that an optimal solution to \eqref{eq: min-integral} is always attained. Subsequently, we show that such an optimal solution is PRO to \eqref{eq: PRO-general}.\\

\emph{Part 1 (The optimum is attained)}:\\
We treat the two cases for $\mathcal{X}$ separately. 

Case (i):  Set $\mathcal{X}$ is compact and $f(\bm{x},\bm{z})$ continuous in $\bm{x}$. We show that $g$ is continuous. Consider a sequence $\{\bm{x}_n\}_{n\in \mathbb{N}}$ converging to $\bm{x}$. By continuity of $f$ in $\bm{x}$, $\lim_{n \rightarrow \infty} f(\bm{x}_n, \bm{z}) = f(\bm{x},\bm{z})$. Thus,
\begin{align} \label{eq: g-cont1}
g(\bm{x}) = \int_{U} f(\bm{x},\bm{z}) dP(\bm{z}) = \int_{U} \lim_{n \rightarrow \infty} f(\bm{x}_n, \bm{z}) dP(\bm{z}).
\end{align}
Let $M>0$ be such that $|f(\bm{x},\bm{z})| < M$, and define $h:\mathbb{R}^L \mapsto \mathbb{R}$ with $h(\bm{z}) =M$ for all $\bm{z}$. Then $h$ is Lebesgue-integrable, and we can apply the dominated convergence theorem to switch the order of the limit and integration in \eqref{eq: g-cont1} to obtain 
\begin{align*} 
g(\bm{x}) = \lim_{n \rightarrow \infty}  \int_{U} f(\bm{x}_n, \bm{z}) dP(\bm{z}) = \lim_{n \rightarrow \infty}  g(\bm{x}_n),
\end{align*}
Hence, $g(\bm{x})$ is continuous for each $\bm{x} \in \mathbb{R}^n$. Let $\mathcal{X}^{\text{RO}}$ denote the set of robustly (worst-case) optimal solutions to \eqref{eq:  PRO-general}. Then $\mathcal{X}^{\text{RO}}$ is compact if $\mathcal{X}$ is compact. Problem \eqref{eq: min-integral} minimizes a continuous function over a compact domain, so, by the extreme value theorem, a minimum is always attained. 

Case (ii): Set $\mathcal{X}$ is a finite set. Problem \eqref{eq: min-integral} minimizes $g(\bm{x})$ over a finite set, so the minimum is attained.\\

\emph{Part 2 (An optimal solution is PRO)}:\\
Let $\bm{\hat{x}}$ denote an optimal solution to \eqref{eq: min-integral}. We proceed by showing via proof by contradiction that $\bm{\hat{x}}$ is PRO to \eqref{eq: PRO-general}. Suppose $\bm{\hat{x}}$ is not PRO to \eqref{eq: PRO-general}. Then there exists an $\bm{\bar{x}} \in \mathcal{X}^{\text{RO}}$ such that 
\begin{align*}
f(\bm{\bar{x}},\bm{z}) &\leq f(\bm{\hat{x}},\bm{z}),~~\forall \bm{z} \in U, \\
f(\bm{\bar{x}},\bm{\bar{z}}) &< f(\bm{\hat{x}},\bm{\bar{z}}),~~\text{ for some } \bm{\bar{z}} \in U.
\end{align*}
We proceed by showing that there must exist a ball contained in $U$ with strictly positive measure where strict inequality holds. Let $\bar{B}$ denote the ball with radius $\delta$ centered at $\bm{\bar{z}}$:
\begin{align*}
\bar{B} = \{\bm{z} \in \mathbb{R}^L : \|\bm{z} - \bm{\bar{z}}\|_2 \leq \delta \}.
\end{align*}
By continuity of $f(\bm{\bar{x}},\bm{z}) - f(\bm{\hat{x}},\bm{z})$ w.r.t. $\bm{z}$, there exists a $\delta >0$ such that for each $\bm{z} \in \bar{B}$ it holds that $f(\bm{\bar{x}},\bm{z}) - f(\bm{\hat{x}},\bm{z}) < 0$. Note that $\bm{\bar{z}}$ need not be in the relative interior of $U$. Hence, the ball $\bar{B}$ need not be contained in $U$. Let $\bm{\tilde{z}} \in \text{ri}(U)$. We construct a new scenario $\bm{z}^{\ast} = \theta \bm{\tilde{z}} + (1-\theta) \bm{\bar{z}}$. Because $U$ is convex, $\bm{z}^{\ast} \in \text{ri}(U)$ if $0 \leq \theta <1$ according to \citet[Theorem 6.1]{Rockafellar70}. Choosing $1 - \delta \|\bm{\tilde{z}} -  \bm{\bar{z}}\|_2^{-1} < \theta < 1$ ensures that $\bm{z}^{\ast} \in \text{int}(\bar{B}) \cap \text{ri}(U) = \text{ri}(U \cap \bar{B})$. Consider the ball $B^{\ast}$ with radius $\epsilon>0$ centered at $\bm{z}^{\ast}$:
\begin{align*}
B^{\ast} = \{\bm{z} \in \mathbb{R}^L : \|\bm{z} - \bm{z}^{\ast} \|_2 \leq \epsilon \}.
\end{align*}
For sufficiently small $\epsilon > 0$, it holds that $\bm{z} \in B^{\ast} \Rightarrow \bm{z} \in U \cap \bar{B}$. In other words, for such an $\epsilon$, each point $\bm{z} \in B^{\ast}$ is in the uncertainty set $U$ and is such that $f(\bm{\bar{x}},\bm{z}) < f(\bm{\hat{x}},\bm{z})$. 

Finally, we consider the difference between $g(\bm{\bar{x}})$ and $g(\bm{\hat{x}})$ on $U$. Note that $|g(\bm{x})|<\infty$ for all $\bm{x}$. The following holds:
\begin{align*}
g(\bm{\bar{x}}) - g(\bm{\hat{x}}) = \int_{U\backslash B^{\ast}} f(\bm{\bar{x}},\bm{z}) - f(\bm{\hat{x}},\bm{z}) d
 P(\bm{z}) + \int_{B^{\ast}} f(\bm{\bar{x}},\bm{z}) - f(\bm{\hat{x}},\bm{z}) dP(\bm{z}). 
\end{align*}
The first integral is nonpositive since $f(\bm{\bar{x}},\bm{z}) \leq f(\bm{\hat{z}},\bm{z})$ for each $\bm{z} \in U\backslash B^{\ast}$. The second integral is strictly negative since $f(\bm{\bar{x}},\bm{z}) < f(\bm{\hat{z}},\bm{z})$ for $\bm{z}\in B^{\ast}$ and measure $P$ is strictly positive, i.e., $P(B^{\ast}) >0$. Hence, $g(\bm{\bar{x}}) < g(\bm{\hat{x}})$, contradicting the fact that $\bm{\hat{x}}$ is optimal to \eqref{eq: min-integral}.
\end{proof}

The result of \Cref{thm: x-PARO} immediately follows.
\begin{proof}[Proof of \Cref{thm: x-PARO}.]
By \Cref{lemma: PRO-FME}, it suffices to prove existence of a PRO solution to \eqref{eq: P-FME}. Because $\mathcal{X}=\mathcal{X}_{\text{FME}}$, set $\mathcal{X}_{\text{FME}}$ is compact. By construction of \eqref{eq: P}, uncertainty set $U$ is assumed to be convex, compact with a nonempty relative interior. Lastly, the objective function of \eqref{eq: P-FME} is continuous in $\bm{x}$ and $\bm{z}$. Hence, all conditions of \Cref{lemma: PRO-existence} are satisfied, and existence of a PARO solution to \eqref{eq: P} is guaranteed.
\end{proof}

\subsection{\texorpdfstring{Proof \Cref{lemma: PARO-extension-PWL} via FME}{}} \label{app: proof-PARO-extension-PWL-FME}
Let $\bm{x}$ be ARF to \eqref{eq: P}. W.l.o.g., suppose in the FME procedure the adaptive variables are eliminated in the order $y_1,\dotsc,y_{n_y}$, i.e., according to their index. Let $F_k(y_{k+1}(\bm{z}),\dotsc,y_{n_y}(\bm{z}),\bm{z})$ denote the optimal decision rule for $y_k$ as a function of the decision rules for the adaptive variables with higher index and the uncertain parameter $\bm{z}$. We prove by induction on $k=1,\dotsc,n_y$ that $F_k(y_{k+1}(\bm{z}),\dotsc,y_{n_y}(\bm{z}),\bm{z})$ is jointly PWL in $y_{k+1},\dotsc,y_{n_y}$ and $\bm{z}$. 

According to \Cref{lemma: y-representation}, we can write the bounds after elimination of variable $y_1(\bm{z})$ as 
\begin{align*} 
&\max_{S \in C_{1}^{-}} \Big\{ \sum_{p \in S} \alpha(S,p) \varphi_p(\bm{z})  - \sum_{l=2}^{n_y} \beta(S,l) y_l(\bm{z}) \Big \} \leq y_1(\bm{z}) \\
&\hspace*{3cm} \leq \min_{T \in C_{1}^{+}} \Big\{ \sum_{q \in T} \alpha(T,q) \varphi_q(\bm{z})  - \sum_{l=2}^{n_y} \beta(T,l) y_l(\bm{z}) \Big \},~~\forall \bm{z} \in U,
\end{align*}
for some coefficients $\alpha$ and $\beta$ independent of $\bm{z}$. For fixed $y_2,\dotsc,y_{n_y}$, $\bm{z}$ and $\bm{x}$, the highest possible contribution of $y_1$ to the objective value is achieved by setting $y_1$ equal to its upper bound if $d_1 <0$, and equal to its lower bound if $d_1 >0$. Thus, $F_1(y_2(\bm{z}),\dotsc,y_{n_y}(\bm{z}),\bm{z})$ is equal to either the upper or the lower bound on $y_1$. Both the upper and lower bound are jointly PWL in $y_i$, $i=2,\dotsc,n_y$ and $\bm{z}$. 

Now, suppose that for each $i=1,\dotsc,k-1$, after elimination of variable $y_{i}(\bm{z})$ the optimal decision rule $F_{i}(y_{i+1}(\bm{z}),\dotsc,y_{n_y}(\bm{z}),\bm{z})$ is jointly PWL in $y_{i+1},\dotsc,y_{n_y}$.
 
After elimination of $y_k(\bm{z})$ we can again write the bounds according to \Cref{lemma: y-representation}. For fixed $y_{k+1},\dotsc,y_{n_y}$, $\bm{z}$ and $\bm{x}$, the highest possible contribution of $y_k$ to the objective value is achieved by minimizing $\bm{d}^{\top}\bm{y}$, i.e., solving 
\begin{subequations} \label{eq: yk-problem}
\begin{align}
\begin{split} 
\min_{y_k} ~&~ \sum_{i=1}^{k-1} d_i F_i(F_{i+1}(\dotsc),\dotsc,F_{k-1}(y_k(\bm{z}),\dotsc,y_{n_y}(\bm{z}),\bm{z}), y_k(\bm{z}),\dotsc,y_{n_y}(\bm{z}),\bm{z}) \\
& + d_k y_k(\bm{z}) + \sum_{i=k+1}^{n_y}d_i y_i(\bm{z}), 
\end{split}   \label{eq: yk-problem-1}\\
\text{s.t.} ~&~ \max_{S \in C_{k}^{-}} \Big\{ \sum_{p \in S} \alpha(S,p) \varphi_p(\bm{z})  - \sum_{l=k+1}^{n_y} \beta(S,l) y_l(\bm{z}) \Big \} \leq y_k(\bm{z}), \label{eq: yk-problem-2}\\
~&~ \min_{T \in C_{k}^{+}} \Big\{ \sum_{q \in T} \alpha(T,q) \varphi_q(\bm{z})  - \sum_{l=k+1}^{n_y} \beta(T,l) y_l(\bm{z}) \Big \} \geq y_k(\bm{z}),\label{eq: yk-problem-3}
\end{align}
\end{subequations}
where the last term in the objective (the last summation) may be dropped because it does not depend on $y_k$. In the objective each decision rule $F_i$, $i=1,\dotsc,k-1$, is a function of the decision rules $F_{i+1},\dotsc, F_{k-1}$, variables $y_k(\bm{z}),\dotsc,y_{n_y}(\bm{z})$ and $\bm{z}$. Plugging in a PWL argument in a PWL function retains the piecewise linear structure. Thus, \eqref{eq: yk-problem} asks to minimize a univariate PWL function on a closed interval. The optimum is attained at either an interior point or a boundary point; we consider these cases separately.
\begin{itemize}
\item Problem \eqref{eq: yk-problem} has a boundary minimum. The minimum is attained at either the lower or upper bounds provided by \eqref{eq: yk-problem-2} and \eqref{eq: yk-problem-3}. In this case, $F_k(y_{k+1}(\bm{z}),\dotsc,y_{n_y}(\bm{z}),\bm{z})$ is clearly jointly PWL in $y_{k+1}(\bm{z}),\dotsc,y_{n_y}(\bm{z})$ and $\bm{z}$. 
\item Problem \eqref{eq: yk-problem} has an interior minimum. The unrestricted minimum of \eqref{eq: yk-problem-1} is at the intersection of two functions that are jointly linear in $y_{k},\dotsc,y_{n_y}$ and $\bm{z}$. Any intersection point can be expressed as
\begin{align*}
s_0(\bm{z}) + \sum_{i=k}^{n_y} s_i y_i(\bm{z}) = t_0(\bm{z}) + \sum_{i=k}^{n_y} t_i y_i(\bm{z}), 
\end{align*}
for some scalars $s_0(\bm{z})$ and $t_0(\bm{z})$ depending linearly on $\bm{z}$ and some vectors $\bm{s},\bm{t} \in \mathbb{R}^{n_y-k}$. This is equivalent to
\begin{align*}
y_k(\bm{z}) = \frac{s_0(\bm{z}) - t_0(\bm{z}) + \sum_{i=k+1}^{n_y}(s_i-t_i) y_i(\bm{z}) }{t_k - s_k},
\end{align*}
and this is jointly linear in $y_{k},\dotsc,y_{n_y}$ and $\bm{z}$. The pair $\{(s_0(\bm{z}),\bm{s}) ,(t_0(\bm{z}),\bm{t})\}$ that defines the interior minimum intersection point depends on $y_{k},\dotsc,y_{n_y}$ and $\bm{z}$. Thus, the optimal decision rule $F_k(y_{k+1}(\bm{z}),\dotsc,y_{n_y}(\bm{z}),\bm{z})$ is a PWL function of $y_{k+1},\dotsc,y_{n_y}$ and $\bm{z}$.
\end{itemize}
This completes the induction step. Lastly, note that $F_{n_y}(\bm{z})$ is PWL in $\bm{z}$ and that plugging in a PWL argument in a PWL function retains the piecewise linear structure. Thus, going from $k=n_y$ to $k=1$ and for each $k$ plugging in $F_{k}(y_{k+1}(\bm{z}),\dotsc,y_{n_y}(\bm{z}),\bm{z})$ in $F_{k-1}(y_{k}(\bm{z}),\dotsc,y_{n_y}(\bm{z}),\bm{z})$ yields decision rules that are PWL in $\bm{z}$ for all variables $y_1,\dotsc,y_{n_y}$.

\subsection{\texorpdfstring{Proof \Cref{lemma: PARO-extension-PWL} via linear optimization}{}} \label{app: proof-PARO-extension-PWL-LO}
Let $\bm{x}$ be ARF to \eqref{eq: P}. We make use of the concept of basic solutions in linear optimization \citep{Bertsimas97}. In standard form the remaining problem for $\bm{y}$ for fixed $\bm{z}$, reads:
\begin{subequations} \label{eq: LP-v}
\begin{align}
\min_{\bm{y}^{+},\bm{y}^{-},\bm{s}} ~&~ \bm{d}^{\top} \big(\bm{y}^{+}- \bm{y}^{-}\big), \\
\text{s.t.} ~&~ \bm{B}\big(\bm{y}^{+} - \bm{y}^{-} \big) + \bm{s} = \bm{r}(\bm{z}) - \bm{A}(\bm{z})\bm{x}, \\
~&~ \bm{y}^{+},\bm{y}^{-},\bm{s} \geq \bm{0},
\end{align}
\end{subequations}
where $\bm{s}$ is a slack variable and $\bm{y}$ is represented by the difference of two nonnegative variables. Let $\bm{v} \in \mathbb{R}^{2n_y+m}$, $\bm{M} \in \mathbb{R}^{m\times(2n_y+m)}$ and $\bm{f} \in \mathbb{R}^{2n_y+m}$ denote the vector of decision variables, the equality constraint matrix and the objective vector of \eqref{eq: LP-v}, respectively:
\begin{align}
\bm{v} = [\bm{y}^{+} ~ \bm{y}^{-} ~ \bm{s}]^{\top},~\bm{M} = [\bm{B}~ \minus\bm{B}~ \bm{I}],~\bm{f} = [\bm{d} ~ \minus\bm{d}~ \bm{0}]^{\top}.
\end{align}
Each basis is represented by $m$ linearly independent columns of $\bm{M}$. Let $\bm{W} \in \mathbb{R}^{m\times m}$ denote a basis matrix, and let $\bm{v}_{\bm{W}}$ and $\bm{f}_{\text{W}}$ denote the components of $\bm{v}$ and $\bm{f}$ corresponding to the basic variables. For any basic solution $\bm{v}$ it holds that
\begin{align} \label{eq: LP-v-components}
\bm{v}_{\bm{W}} = \bm{W}^{-1}\big(\bm{r}(\bm{z}) - \bm{A}(\bm{z})\bm{x}\big),
\end{align}
and the remaining non-basic components of $\bm{v}$ are equal to zero. Denote the basic solution by $(\bm{v}_{\bm{W}},\bm{0}_{\backslash\bm{W}})$; it is a basic feasible solution (BFS) to \eqref{eq: LP-v} if and only if $\bm{v}_{\bm{W}} \geq \bm{0}$. For optimality of $(\bm{v}_{\bm{W}},\bm{0}_{\backslash\bm{W}})$ it is additionally required that the reduced costs are nonnegative. Nonnegativity of the reduced costs (i.e., optimality of $(\bm{v}_{\bm{W}},\bm{0}_{\backslash\bm{W}})$) reads
\begin{align} \label{eq: LP-v-optimality}
\bm{f} - \bm{f}_{\text{W}}^{\top} \bm{W}^{-1}\bm{M} \geq \bm{0}.
\end{align}

We restrict ourselves to those basic solutions for which optimality condition \eqref{eq: LP-v-optimality} holds, note that this condition is independent of $\bm{z}$. It follows that for each basis matrix $\bm{W}$ that satisfies \eqref{eq: LP-v-optimality}, it associated basic solution $(\bm{v}_{\bm{W}},\bm{0}_{\backslash\bm{W}})$ is feasible (and optimal) if and only if $\bm{z}$ is in the following subset of $U$:
\begin{align*}
U_{\bm{W}}(\bm{x}) = \{\bm{z}\in U ~:~ \bm{W}^{-1}\big(\bm{r}(\bm{z}) - \bm{A}(\bm{z})\bm{x}\big) \geq \bm{0} \}.
\end{align*}
Let $\bm{y}(\bm{x},\bm{z},\bm{W})$ denote the basic solution corresponding to $\bm{W}$ in terms of the original variables $\bm{y}$. From \eqref{eq: LP-v-components} it follows that $\bm{y}(\bm{x},\bm{z},\bm{W})$ is linear in $\bm{z}$.

Any basic solution to \eqref{eq: LP-v} corresponds with at least one basis, and each basis is represented by $m$ linearly independent columns of $\bm{M}$. Thus, there are at most $\beta=\binom{2n_y+m}{m}$ bases (i.e., matrices $\bm{W}$) to \eqref{eq: LP-v} that satisfy \eqref{eq: LP-v-optimality}, independent of $\bm{z}$. Number the matrices $\bm{W}_1,\dotsc,\bm{W}_{\beta}$. Each of these matrices $\bm{W}_j$ has its own LDR $\bm{y}(\bm{x},\bm{z},\bm{W}_j)$ that is optimal for all $\bm{z} \in U_{\bm{W}_j}(\bm{x})$.

Because $\bm{x}$ is ARF to \eqref{eq: P} and \eqref{eq: P} has a finite optimal objective value, problem \eqref{eq: LP-v} is feasible and has a finite optimum for all $\bm{z} \in U$. Therefore, there exists an optimal basic feasible solution for all $\bm{z} \in U$, and the union of all $U_{\bm{W}_i}$ equals $U$ itself. This implies that, for the given $\bm{x}$, the following PWL decision rule is optimal for each $\bm{z} \in U$:
\begin{align*}
\bm{y}(\bm{z}) = \bm{y}(\bm{x},\bm{z},\bm{W}_{i^{\ast}}) \text{ if } i^{\ast} = \min\{i: \bm{z}\in U_{\bm{W}_i}(\bm{x})\}.
\end{align*}
Note that a different numbering of the matrices gives a (possibly) different optimal PWL decision rule. In essence, the proof performs sensitivity analysis on the right-hand side vectors of \eqref{eq: LP-v}, which is the only term in \eqref{eq: LP-v} that depends on $\bm{z}$.

\subsection{\texorpdfstring{Proof \Cref{theorem: DR-PARO}}{}} \label{app: proof-DR-PARO}
Let $\text{OPT}$ denote the optimal (worst-case) objective value of $P$. By \Cref{def: PARO-x}, and using that $\bm{d} = \bm{0}$, a solution $\bm{x}^{\ast}$ is PARO to $P$ if and only if the following statement holds:
\begin{itemize}
\item There exists a $\bm{y}^{\ast} \in \mathcal{R}^{L,n_y}$ such that $(\bm{x}^{\ast}, \bm{y}^{\ast}(\cdot))$ is ARO to $P$ and there does not exist a pair $(\bm{\bar{x}},\bm{\bar{y}}(\cdot))$ that is ARO to $P$ and 
\begin{align} \label{eq: x-PRO-inequalities}
\begin{aligned}
\bm{c}(\bm{z})^{\top}\bm{\bar{x}} &\leq \bm{c}(\bm{z})^{\top}\bm{x^{\ast}}, ~~ \forall \bm{z} \in U,\\
\bm{c}(\bm{\bar{z}})^{\top}\bm{\bar{x}} &< \bm{c}(\bm{\bar{z}})^{\top}\bm{x^{\ast}}, ~~ \text{for some } \bm{\bar{z}} \in U.
\end{aligned}
\end{align}
\end{itemize}
By definition of set $\mathcal{X}$, this holds if and only if 
\begin{itemize}
\item $\bm{x}^{\ast} \in \mathcal{X}$, $\text{OPT} =  \max_{\bm{z} \in U} \bm{c}(\bm{z})^{\top}\bm{x}^{\ast}$ and there does not exist an $\bm{\bar{x}} \in \mathcal{X}$ such that $\text{OPT} =  \max_{\bm{z} \in U} \bm{c}(\bm{z})^{\top}\bm{\bar{x}}$ and \eqref{eq: x-PRO-inequalities} holds. 
\end{itemize}
Because for any ARF $\bm{x}$ there exists an ARF decision rule $\bm{y}(\cdot)$ such that $\bm{y}(\bm{z}) = f_{\bm{w}}(\bm{z})$ for some $\bm{w}$, it follows that $\mathcal{X}$ is equal to
\begin{align*}
\mathcal{X}_f = \{\bm{x} \in \mathbb{R}^{n_x}~|~\exists \bm{w} \in \mathbb{R}^p  : \bm{A}(\bm{z})\bm{x} + \bm{B}f_{\bm{w}}(\bm{z}) \leq \bm{r}(\bm{z}),~~\forall \bm{z} \in U \},
\end{align*} 
which is the set of feasible $\bm{x}$ when Stage-2 decision rules are restricted to be of form $f_{\bm{w}}(\bm{z})$. Hence, the previous set of conditions holds if and only if 
\begin{itemize}
\item $\bm{x}^{\ast} \in \mathcal{X}_f$, $\text{OPT} =  \max_{\bm{z} \in U} \bm{c}(\bm{z})^{\top}\bm{x}^{\ast}$ and there does not exist an $\bm{\bar{x}} \in \mathcal{X}$ such that $\text{OPT} =  \max_{\bm{z} \in U} \bm{c}(\bm{z})^{\top}\bm{\bar{x}}$ and \eqref{eq: x-PRO-inequalities} holds.
\end{itemize}
Parameters $\bm{w}$ are now Stage-1 decision variables, so $\mathcal{X}_f$ does not contain adaptive variables. The set of conditions describes a PRO solution to the static robust optimization problem obtained after plugging in decision rule structure $f_{\bm{w}}(\cdot)$.

\subsection{\texorpdfstring{Proof \Cref{cor: SDR-LDR-PARO}}{}} \label{app: proof-SDR-LDR-PARO}
\emph{\Cref{cor: hybrid-PARO}}: For any vector of parameters $\bm{w} \in \mathbb{R}^p$, let $f_{\bm{w}}(\bm{\hat{z}})$ denote a decision rule that depends only on $\bm{\hat{z}} \in \hat{U}$, the non-constraintwise component of uncertain parameter $\bm{z}$. From \Cref{lemma: hybrid-ARF} it follows that $\mathcal{X}$ is equal to
\begin{align*}
\mathcal{X}_{\text{hybrid}} = \{\bm{x} \in \mathbb{R}^{n_x}~|~\exists \bm{w} \in \mathbb{R}^p : \bm{A}(\bm{z})\bm{x} + \bm{B} f_{\bm{w}}(\bm{\hat{z}}) \leq \bm{r}(\bm{z}),~~\forall \bm{z} \in U \},
\end{align*} 
i.e., the feasible region for $\bm{x}$ remains unchanged if all adaptive variables are restricted to depend only on the non-constraintwise component of $\bm{z}$. Hence, setting $X_f = \mathcal{X}_{\text{hybrid}}$ in the proof of \Cref{theorem: DR-PARO} yields the result.\\

\emph{\Cref{cor: block-PARO}}: For each block $v=1,\dotsc,V$, let $\bm{w}(v) \in \mathbb{R}^{p(v)}$ denote a vector of parameters and let $f^{v}_{\bm{w}(v)}(\bm{z}_{(v)})$ denote a decision rule that depends only on $\bm{z}_{(v)}$, the uncertain parameters in block $v$.  From \Cref{lemma: block-ARF} it follows that $\mathcal{X}$ is equal to 
\begin{align*}
\mathcal{X}_{\text{block}} &= \{\bm{x} \in \mathbb{R}^{n_x}~|~ \forall v=1,\dotsc,V,~\exists \bm{w}(v) \in \mathbb{R}^{p(v)} : \bm{a}_i(\bm{z}_{(v)})^{\top}\bm{x} + \bm{b}_i^{\top}f^{v}_{\bm{w}(v)}(\bm{z}_{(v)}) \leq r_i(\bm{z}_{(v)}),\\
& \hspace*{6cm} ~~\forall \bm{z} \in U^v,~\forall i \in K(v) \}, 
\end{align*}
i.e., the feasible region for $\bm{x}$ remains unchanged if all adaptive variables are restricted to depend only on uncertain parameters in their own block. Hence, setting $X_f = \mathcal{X}_{\text{block}}$ in the proof of \Cref{theorem: DR-PARO} yields the result.\\

\emph{\Cref{cor: simplex-LDR-PARO}}: From \Cref{lemma: simplex-LDR-ARF} it follows that for simplex uncertainty $\mathcal{X}$ is equal to 
\begin{align*}
\mathcal{X}_{\text{simplex}} &= \{\bm{x} \in \mathbb{R}^{n_x}~|~\exists \bm{u} \in \mathbb{R}^{n_y},~\bm{V} \in \mathbb{R}^{n_y\times L} : \bm{A}(\bm{z})\bm{x} + \bm{B}(\bm{u} + \bm{V} \bm{z}) \leq \bm{r}(\bm{z}),~~\forall \bm{z} \in U \}, 
\end{align*}
i.e., the feasible region for $\bm{x}$ remains unchanged if all adaptive variables are restricted to depend affinely on $\bm{z}$. Hence, setting $X_f = \mathcal{X}_{\text{simplex}}$ in the proof of \Cref{theorem: DR-PARO} yields the result.

\subsection{\texorpdfstring{Proof \Cref{lemma: PARO-x-unique}}{}} \label{app: proof-PARO-x-unique}
The two cases are considered separately.
\begin{itemize}
\item \emph{Optimal objective value is zero}: Proof by contradiction. Suppose $\bm{y}^{\ast}(\cdot)$ is not a PARO extension of $\bm{x}^{\ast}$. Then, by \Cref{def: PARO-ext-y}, there exists a $\bm{\tilde{y}}(\cdot)$ such that $(\bm{x}^{\ast},\bm{\tilde{y}}(\cdot))$ is ARO to \eqref{eq: P} and for some $\bm{\tilde{z}} \in U$ it holds that
\begin{align*}
\bm{c}(\bm{\tilde{z}})^{\top} \bm{x}^{\ast} + \bm{d}^{\top} \bm{y}^{\ast}(\bm{\tilde{z}}) > \bm{c}(\bm{\tilde{z}})^{\top} \bm{x}^{\ast} + \bm{d}^{\top} \bm{\tilde{y}}(\bm{\tilde{z}}).
\end{align*}
However, then $(\bm{z},\bm{y}) = (\bm{\tilde{z}}, \bm{\tilde{y}}(\bm{\tilde{z}}))$ is feasible to \eqref{eq: check-PARO-extension} with positive objective value. This is a contradiction.
\item \emph{Optimal objective value is positive}:  Let $(\bm{\bar{z}},\bm{\bar{y}})$ denote the optimal solution to \eqref{eq: check-PARO-extension} and let $\bar{v}$ denote the optimal objective value. The decision rule 
\begin{align*}
\bm{y}(\bm{z}) = 
\begin{cases}
\bm{y}^{\ast}(\bm{z}) & \text{ if } \bm{z} \neq \bm{\bar{z}} \\
\bm{\bar{y}} & \text{ otherwise,} 
\end{cases}
\end{align*}
dominates the decision rule $\bm{y}^{\ast}(\cdot)$, so the latter is not PARO. We prove the last part of the lemma by contradiction. Suppose there exists a scenario $\bm{\tilde{z}}$ and a decision $\bm{\tilde{y}}$ such that 
\begin{align*} 
\Big(\bm{c}(\bm{\tilde{z}})^{\top} \bm{x}^{\ast} + \bm{d}^{\top} \bm{y}^{\ast}(\bm{\tilde{z}})\Big) - \Big( \bm{c}(\bm{\tilde{z}})^{\top} \bm{x^{\ast}} + \bm{d}^{\top}\bm{\tilde{y}} \Big) &> \bar{v}, \\
\bm{A}(\bm{\tilde{z}}) \bm{x}^{\ast} + \bm{B}\bm{\tilde{y}} &\leq \bm{r}(\bm{\tilde{z}}),
\end{align*}
i.e., $\bm{\tilde{y}}$ is a feasible wait-and-see decision for scenario $\bm{\tilde{z}}$, and the resulting objective value of $\bm{y}^{\ast}(\bm{\tilde{z}})$ exceeds that of $\bm{\tilde{y}}$ by more than $\bar{v}$. Then $(\bm{\tilde{z}}, \bm{\tilde{y}})$ is feasible to \eqref{eq: check-PARO-extension} with a strictly better objective value than $\bar{v}$. This is a contradiction.
\end{itemize}

\subsection{\texorpdfstring{Proof \Cref{lemma: PARO-extreme-points-d0}}{}} \label{app: proof-PARO-extreme-points-d0}
Proof by contradiction, analogous to proof of Theorem 1 of \citet{Iancu14}. Because $U$ is the convex hull of $\bm{z}^1,\dotsc,\bm{z}^N$, \eqref{eq: aux-d0-2} and \eqref{eq: aux-d0-3} ensure that $\bm{x}^{\ast}$ is ARO to \eqref{eq: P} (with $\bm{d}=\bm{0}$). Suppose $\bm{x}^{\ast}$ is not PARO to \eqref{eq: P}. According to \Cref{def: PARO-x} there exists an $\bm{\hat{x}}$ that is ARO to \eqref{eq: P} and
\begin{align*}
\bm{c}(\bm{z})^{\top}\bm{\hat{x}} &\leq \bm{c}(\bm{z})^{\top}\bm{x}^{\ast}, ~~\forall \bm{z} \in U, \\
\bm{c}(\bm{\hat{z}})^{\top}\bm{\hat{x}} &< \bm{c}(\bm{\hat{z}})^{\top}\bm{x}^{\ast}, ~~ \text{for some } \bm{\hat{z}} \in U.
\end{align*}
Because $\bm{\hat{x}}$ is ARO to \eqref{eq: P}, there also exist $(\bm{\hat{y}^1},\dotsc,\bm{\hat{y}^N})$ that, together with $\bm{\hat{x}}$, are feasible to \eqref{eq: aux-d0}.

The linear optimization problem $\min_{\bm{z} \in U} \bm{c}(\bm{z})^{\top}(\bm{\hat{x}} - \bm{x}^{\ast})$ attains the minimum in a vertex solution, so without loss of generality we can assume $\bm{\hat{z}} \in \text{ext}(U)$. Any point $\bm{\bar{z}} \in \text{ri}(U)$ can be written as a strict convex combination of the extreme points of $U$ \citep{Rockafellar70}, so $\bm{\bar{z}} = \sum_{i=1}^N \alpha_i \bm{z}^i$ for some $\bm{\alpha} \in \mathbb{R}^{N}$ with $\sum_{i=1}^N \alpha_i = 1$, $\alpha_i >0$ for all $i$. Then
\begin{align*}
\bm{c}(\bm{\bar{z}})^{\top}(\bm{\hat{x}} - \bm{x}^{\ast}) = \sum_{\substack{i=1 \\ \bm{z}^i \neq \bm{\hat{z}}}}^N \alpha_i \bm{c}(\bm{z}^i)^{\top}(\bm{\hat{x}} - \bm{x}^{\ast}) + \hat{\alpha} \bm{c}(\bm{\hat{z}})^{\top}(\bm{\hat{x}} - \bm{x}^{\ast}),
\end{align*}
where the first term of the RHS is nonpositive and the second term is strictly negative. This contradicts the fact that $(\bm{x}^{\ast},\bm{y}^{1\ast},\dotsc,\bm{y}^{N\ast})$ is optimal to \eqref{eq: aux-d0}.

\subsection{\texorpdfstring{Proof \Cref{lemma: PARO-CCG}}{}} \label{app: proof-PARO-CCG}
In iteration $0$ of \Cref{alg: CCG-2}, solution $\bm{x}^0$ is the ARO solution resulting from \Cref{alg: CCG}. In subsequent iterations, solution $\bm{x}^k$ is only replaced by a candidate solution $\bm{x}^c$ if $q(\bm{x}^c) \leq \text{OPT}$. Value $q(\bm{x}^c)$ is the optimal objective value of $Q(\bm{x}^c)$, i.e., it is the worst-case objective value of \eqref{eq: P} with fixed Stage-1 decision $\bm{x}^c$. Thus, $\bm{x}^k$ is only replaced by $\bm{x}^c$ if $\bm{x}^c$ is ARO, so in any iteration $\bm{x}^k$ is ARO. It remains to show that if the objective value of problem $P_2(\bm{x}^k,M^k)$ is nonnegative, solution $\bm{x}^k$ is PARO. Proof by contradiction.

Suppose $\bm{x}^k$ is not PARO. Then there exists another $\bm{x}^{\ast}$ that is ARO to \eqref{eq: P} that additionally satisfies the following two conditions:
\begin{enumerate}
\item For each $\bm{z} \in U$ there exists a $\bm{y}$ such that for all $\bm{y}^k$ with $\bm{A}(\bm{z})\bm{x}^k + \bm{B}\bm{y}^k  \leq \bm{r}(\bm{z})$ we have
\begin{align*}
 \bm{c}(\bm{z})^{\top}\bm{x^{\ast}} + \bm{d}^{\top} \bm{y} &\leq \bm{c}(\bm{z})^{\top}\bm{x}^k + \bm{d}^{\top}\bm{y}^k,\\
\bm{A}(\bm{z})\bm{x^{\ast}} + \bm{B} \bm{y} & \leq \bm{r}(\bm{z}).
\end{align*}
\item There exists a $\bm{z^{\ast}} \in U$ and a $\bm{y^{\ast}}$ such that for all $\bm{y}^k$ with $\bm{A}(\bm{z}^{\ast})\bm{x}^k + \bm{B}\bm{y}^k \leq \bm{r}(\bm{z^{\ast}})$ we have
\begin{align*}
 \bm{c}(\bm{z^{\ast}})^{\top}\bm{x}^{\ast} + \bm{d}^{\top} \bm{y^{\ast}} &< \bm{c}(\bm{z}^{\ast})^{\top}\bm{x}^k + \bm{d}^{\top}\bm{y}^k, \\
\bm{A}(\bm{z}^{\ast})\bm{x}^{\ast} + \bm{B} \bm{y}^{\ast} & \leq \bm{r}(\bm{z}^{\ast}).
\end{align*}
\end{enumerate}
Because $M^k \subseteq U$, the first condition implies that for each $\bm{z}^l \in M^k$ there exists a recourse decision $\bm{y}^{l \ast}$ such that $(\bm{x}^{\ast},\bm{y}^{l \ast})$ satisfies constraints \eqref{eq: P2-3} and \eqref{eq: P2-4}. The second condition is equivalent to the statement that there exists $\bm{z}^{\ast} \in U$ and a $\bm{y}^{\ast}$ such that
\begin{align*}
\max_{\bm{y}^k : \bm{A}(\bm{z}^{\ast})\bm{x}^k + \bm{B}\bm{y}^k \leq \bm{r}(\bm{z}^{\ast})} (\bm{c}(\bm{z}^{\ast})^{\top}\bm{x}^{\ast} + \bm{d}^{\top} \bm{y}^{\ast}) &- (\bm{c}(\bm{z}^{\ast})^{\top}\bm{x}^k + \bm{d}^{\top}\bm{y}^k) < 0,\\
\bm{A}(\bm{z}^{\ast})\bm{x}^{\ast} + \bm{B} \bm{y}^{\ast} & \leq \bm{r}(\bm{z}^{\ast}).
\end{align*}
Put together, this implies that $(\bm{z}^{\ast},\bm{x}^{\ast},\bm{y}^{\ast},\bm{y}^{1 \ast},\dotsc,\bm{y}^{ |M^k| \ast})$ is a feasible solution to $P_2(\bm{x}^k,M^k)$ with strictly negative objective value. This contradicts with $p_2^k =0$. Thus, $\bm{x}^k$ is PARO.

\subsection{\texorpdfstring{Proof \Cref{lemma: hybrid-ARF}}{}} \label{app: proof-hybrid-ARF}
We consider only adaptive robust feasibility and not optimality, so the objective of $P_{\text{hybrid}}$ can be ignored. According to \Cref{lemma: y-representation}, each adaptive variable $y_k(\bm{z})$, $k=1,\dotsc,n_y$ must satisfy bounds \eqref{eq: set-inequalities}. For $P_{\text{hybrid}}$ term $\varphi_i(\bm{\hat{z}},\bm{z}_{(i)}) = r_i(\bm{\hat{z}},\bm{z}_{(i)}) - \bm{a}_i(\bm{\hat{z}}, \bm{z}_{(i)})^{\top}\bm{x}$ depends only on $\bm{\hat{z}}$ and $\bm{z}_{(i)}$, for each $i=1,\dotsc,m$. Sets $\hat{U}$ and $U^i$ are disjoint for each $i=1,\dotsc,m$ so this is equivalent to
\begin{align}  \label{appeq: set-inequalities-hybrid}
\begin{aligned}
& \max_{S \in C_{k}^{-}} \Big\{ \sum_{p \in S} \max_{\bm{z}_{(p)} \in U^p} \big(\alpha(S,p) \varphi_p(\bm{\hat{z}},\bm{z}_{(p)})  - \sum_{l=k+1}^{n_y} \beta(S,l) y_l(\bm{z}) \big) \Big \} \leq y_k(\bm{z}) \\
& \hspace*{0cm} \leq \min_{T \in C_{k}^{+}} \Big\{ \sum_{q \in T} \min_{\bm{z}_{(q)} \in U^q} \big( \alpha(T,q) \varphi_q(\bm{\hat{z}},\bm{z}_{(q)})  - \sum_{l=k+1}^{n_y} \beta(T,l) y_l(\bm{z}) \big) \Big \},~~\forall \bm{\hat{z}} \in \hat{U}.
\end{aligned}
\end{align}
We proceed by backward induction. For $k=n_y$, i.e., the last eliminated variable, bounds \eqref{appeq: set-inequalities-hybrid} depend only on $\bm{z}$ and not on other adaptive variables. According to \Cref{lemma: y-representation}, each term $\varphi_i(\bm{z}_{(i)})$, $i=1,\dotsc,m$, appears in upper bounds with a positive coefficient and in lower bounds with a negative coefficient for all variables $y_1(\bm{z}),\dotsc,y_{n_y}(\bm{z})$ (if it appears), or vice versa. Hence, the worst-case scenario for $\bm{z}_{(i)} \in U^i$ (in terms of feasibility) is equal for all linear terms in the lower and the upper bound for all $i=1,\dotsc,m$. Plugging in this worst-case scenario yields lower and upper bounds on $y_{n_y}(\bm{z})$ depending only on $\bm{\hat{z}}$. Thus, there exists a decision rule for $y_{n_y}(\cdot)$ that is a function of only the non-constraintwise uncertain parameters $\bm{\hat{z}}$.

Suppose that for some $k$ the lower and upper bounds \eqref{appeq: set-inequalities-hybrid} for $y_k(\bm{z})$ depend only on $\bm{\hat{z}}$. Thus, there exists a decision rule for $y_{k}(\cdot)$ that is a function of only $\bm{\hat{z}}$. Plug this decision rule in the lower and upper bounds \eqref{appeq: set-inequalities-hybrid} for $y_{k-1}(\bm{z})$. Then, according to \Cref{lemma: y-representation}, each term $\varphi_i(\bm{z}_{(i)})$, $i=1,\dotsc,m$, appears in upper bounds with a positive coefficient and in lower bounds with a negative coefficient (if it appears), or vice versa. Hence, the worst-case scenario for $\bm{z}_{(i)} \in U^i$ (in terms of feasibility) is equal for all linear terms in the lower and the upper bound, for all $i=1,\dotsc,m$. Plugging in this worst-case scenario yields lower and upper bounds on $y_{k-1}(\bm{z})$ depending only on $\bm{\hat{z}}$. This completes the induction. 

Let $\bm{y}(\bm{\hat{z}})$ be the decision rule resulting from the above procedure. Because $\bm{x}$ is ARF to $P_{\text{hybrid}}$, the resulting pair $(\bm{x},\bm{y}(\bm{\hat{z}}))$ is ARF to $P_{\text{hybrid}}$.

\subsection{\texorpdfstring{Proof \Cref{cor: hybrid-ARO}}{}} \label{app: proof-hybrid-ARO}
We note that if \eqref{eq: P} has hybrid uncertainty and the objective \eqref{eq: P-1} contains adaptive variables, it can equivalently be written as 
\begin{subequations} \label{eq: P-epi}
\begin{align}
\min_{t,\bm{x},\bm{y}(\cdot)} ~&~ t,\\
\text{s.t.} ~&~ \bm{c}(\bm{\hat{z}},\bm{z}_{(0)})^{\top} \bm{x} + \bm{d}^{\top} \bm{y}(\bm{z}) \leq t~~\forall (\bm{\hat{z}},\bm{z}_{(0)}) \in \hat{U}\times U^0, \\
~&~ \bm{a}_i(\bm{\hat{z}},\bm{z}_{(i)})^{\top}\bm{x} + \bm{b}_i^{\top}\bm{y}(\bm{z}) \leq r_i(\bm{\hat{z}},\bm{z}_{(i)}),~~\forall (\bm{\hat{z}},\bm{z}_{(i)}) \in \hat{U}\times U^i,~~\forall i=1,\dotsc,m,
\end{align}
\end{subequations}
where $t \in \mathbb{R}$ is an auxiliary here-and-now decision variable. Problem \eqref{eq: P-epi} also has hybrid uncertainty, and a pair $(\bm{x},\bm{y}(\cdot))$ is ARO to \eqref{eq: P} if and only if there exists a $t \in \mathbb{R}$ such that $(\bm{x},\bm{y}(\cdot),t)$ is ARO to \eqref{eq: P-epi}. Thus, in the remainder of the proof we can assume $\bm{d}=\bm{0}$, i.e., the objective is independent of adaptive variables.

According to \Cref{lemma: hybrid-ARF}, for any ARF $\bm{x}$ there exists a decision rule $\bm{y}(\cdot)$ that depends only on $\bm{\hat{z}}$ such that $(\bm{x},\bm{y}(\cdot))$ is ARF to $P_{\text{hybrid}}$. Any $\bm{x}^{\ast}$ that is ARO to $P_{\text{hybrid}}$ is also ARF to $P_{\text{hybrid}}$, so also for each ARO $\bm{x}^{\ast}$ there exists such a decision rule $\bm{y}^{\ast}(\cdot)$. The objective is independent of adaptive variables, so $(\bm{x}^{\ast},\bm{y}(\cdot))$ is ARO for any ARF $\bm{y}(\cdot)$. Hence, $(\bm{x}^{\ast},\bm{y}^{\ast}(\cdot))$ is ARO to $P_{\text{hybrid}}$.

\subsection{\texorpdfstring{Proof \Cref{lemma: block-ARF}}{}} \label{app: proof-block-ARF}
We consider only adaptive robust feasibility and not optimality, so the objective of $P_{\text{block}}$ can be ignored. Remove index $0$ from its constraint set $K(v)$ (for some $v$). The set of constraints can be written as
\begin{align*}
\bm{a}_i(\bm{z}_{(v)})^{\top}\bm{x} + \bm{b}_i^{\top}\bm{y}_{(v)}(\bm{z}) \leq r_i(\bm{z}_{(v)}),~~\forall \bm{z} \in U,~\forall i \in K(v),~\forall v=1,\dotsc,V.
\end{align*}
Due to the block uncertainty structure, all adaptive variables can be eliminated by performing FME on each block $v$ separately. According to \Cref{lemma: y-representation}, bounds on each adaptive variable $y_k(\bm{z})$ can be represented by \eqref{eq: set-inequalities}. If for some $k=1,\dotsc,n_y$, variable $y_k(\bm{z})$ is an element of $\bm{y}_{(v)}(\bm{z})$ for some block $v$, any $S \in C_k^{-}$ or $T \in C_k^{+}$ is a subset of $K_{(v)}$, the original set of constraints for block $v$. The following two observations immediately follow for the given block $v$:
\begin{itemize}
\item For each $l=1,\dotsc,n_y$ the coefficient of $y_l(\bm{z})$ is zero if $y_l(\bm{z})$ is not an element of $\bm{y}_{(v)}$, i.e., $\beta(S,l) = 0$ for all $S \in C_k^{-} \cup C_k^{+}$.
\item For any $p$ in $S$ or $T$ it holds that $\varphi_p(\cdot)$ is a function of $\bm{z}_{(v)}$ only. 
\end{itemize}
For $k=n_y$, i.e., the last eliminated variable, this implies the lower and upper bounds on $y_{n_y}()$ are independent of $\bm{z}_{(w)}$ for $w\neq v$, and any feasible decision rule can be written as a function of $\bm{z}_{(v)}$ only. Plugging any such decision rule in the lower and upper bounds for $k=n_y-1$ yields the same result for $y_{n_y-1}()$. The final result follows from backward induction.

Let $\bm{y}(\bm{z})$ be the decision rule resulting from the above procedure. Because $\bm{x}$ is ARF to $P_{\text{block}}$, the resulting pair $(\bm{x},\bm{y}(\bm{z}))$ is ARF to $P_{\text{block}}$.

\subsection{\texorpdfstring{Proof \Cref{lemma: simplex-LDR-ARF}}{}} \label{app: proof-simplex-LDR-ARF}
We consider only adaptive robust feasibility and not optimality, so the objective of $P_{\text{simplex}}$ can be ignored. According to \Cref{lemma: y-representation}, in the FME procedure the bounds on variable $y_k(\bm{z})$ are given by \eqref{eq: set-inequalities}. It is sufficient to satisfy the bounds on $y_k(\bm{z})$ for all extreme points of uncertainty set $U$, so we can alternatively write: 
\begin{align} \label{appeq: y-bounds-simplex}
\begin{aligned}
&\max_{S_k \in C_{k}^{-}} \Big\{ \sum_{p \in S_k} \alpha(S_k,p) \varphi_p(\bm{x},\bm{z}^j)  - \sum_{l=k+1}^{n_y} \beta(S_k,l) y_l(\bm{z}^j) \Big \} \leq y_k(\bm{z}^j) \\
& \leq \min_{T_k \in C_{k}^{+}} \Big\{ \sum_{q \in T} \alpha(T_k,q) \varphi_q(\bm{x},\bm{z}^j)  - \sum_{l=k+1}^{n_y} \beta(T_k,l) y_l(\bm{z}^j) \Big \},~~\forall \bm{z}^j,~j=1,\dotsc,L+1.
\end{aligned}
\end{align}
For each $j=1,\dotsc,L+1$, let $l_k(\bm{z}^j)$ and $u_k(\bm{z}^j)$ denote the lower resp. upper bound on $y_k(\bm{z}^j)$ from \eqref{appeq: y-bounds-simplex}.  Affine independence of $\bm{z}^1,\dotsc,\bm{z}^{L+1}$ implies linear independence of $(1,\bm{z}^1),\dotsc,$ $(1,\bm{z}^{L+1})$. Hence, by basic linear algebra, there exists exactly one $(a_0,\bm{a}) \in \mathbb{R}\times\mathbb{R}^{L}$ such that $a_0 + \bm{a}^{\top}\bm{z}^j = l(\bm{z}^j)$ for all $j=1,\dotsc,L+1$. Consider the LDR $y_k(\bm{z}) = a_0 + \bm{a}^{\top}\bm{z}$. Then $l(\bm{z}^j) = y_k(\bm{z}^j) \leq u(\bm{z}^j)$ for all $j=1,\dotsc,L+1$. Hence, $y_k(\bm{z})$ is an LDR that satisfies bounds \eqref{appeq: y-bounds-simplex}. Alternatively, one can construct an LDR that passes through points $(\bm{z}^j,u(\bm{z}^j))$ for all $j=1,\dotsc,L+1$, or any LDR that is a convex combination of the previous two LDRs.

Thus, we can construct a decision rule for $y_k(\bm{z})$ that is linear in $\bm{z}$. For all $k=1,\dotsc,n_y-1$, this decision rule depends on $y_{k+1}(\bm{z}),\dotsc,y_{n_y}(\bm{z})$. For variable $y_{n_y}(\cdot)$, the constructed decision rule is independent of other adaptive variables. Plugging this in the decision rule for $y_{n_y-1}(\cdot)$ yields a decision rule that is again independent of other adaptive variables, and still linear in $\bm{z}$ because the coefficient for $y_{n_y}(\bm{z})$ in $l_{n_y-1}(\bm{z})$ and $u_{n_y-1}(\bm{z})$ does not depend on $\bm{z}$ (fixed recourse). Continuing this procedure yields LDRs for all adaptive variables $y_1(\cdot),\dotsc,y_{n_y}(\cdot)$.

Let $\bm{y}(\bm{z})$ be the decision rule resulting from the above procedure. Because $\bm{x}$ is ARF to $P_{\text{simplex}}$, the resulting pair $(\bm{x},\bm{y}(\bm{z}))$ is ARF to $P_{\text{simplex}}$.

\end{document}